\documentclass[]{sn-jnl}

\usepackage{graphicx}%
\usepackage{multirow}%
\usepackage{amsmath,amssymb,amsfonts}%
\usepackage{amsthm}%
\usepackage{mathrsfs}%
\usepackage[title]{appendix}%
\usepackage{xcolor}%
\usepackage{textcomp}%
\usepackage{manyfoot}%
\usepackage{booktabs}%
\usepackage{algorithm}%
\usepackage{algorithmicx}%
\usepackage{algpseudocode}%
\usepackage{listings,enumerate,cite}%
\theoremstyle{thmstyleone}%
\newtheorem{theorem}{Theorem}%  meant for continuous numbers
\newtheorem{proposition}{Proposition}
\newtheorem{lemma}{Lemma}
\theoremstyle{thmstyletwo}%
\newtheorem{corollary}{Corollary}

\theoremstyle{definition} % Non-italic style for theorems, remarks, etc.
\newtheorem{remark}[theorem]{Remark}
\newtheorem{example}[theorem]{Example}

\theoremstyle{thmstylethree}%
\newtheorem{definition}{Definition}%
\newcommand{\vertiii}[1]{\left\vert\kern-0.25ex\left\vert\kern-0.25ex\left\vert #1\right\vert\kern-0.25ex\right\vert\kern-0.25ex\right\vert}

\newcommand{\al}{\alpha}
\newcommand{\be}{\beta}
\newcommand{\ga}{\gamma}

\newcommand {\R} {\mathbb R}
\newcommand {\N} {\mathbb N}
\newcommand {\Z} {\mathbb Z}

\newcommand{\iv}{^{-1}}

\usepackage{mathptmx}   %Times New Roman
\hypersetup{
	colorlinks=true,
	linkcolor=blue, % Couleur des liens internes
	citecolor=red, % Couleur des num?ros de la biblio dans le corps
	urlcolor=blue  } % Couleur des url
\begin{document}

%\author{
%\name{Doan Huu Hieu\textsuperscript{1,2},
% Nguyen Duy Cuong\textsuperscript{3}
%}
%\thanks{CONTACT Nguyen Duy Cuong. Email: ndcuong@ctu.edu.vn}
%	\thanks{Dedicated to the memory of Prof Alexander Rubinov, a teacher and friend}
%\affil{\textsuperscript{1} 
%Faculty of Mathematics and Computer Science, University of Science, Ho Chi Minh City, Vietnam
%}
%\affil{\textsuperscript{2} 
%Vietnam National University, Ho Chi Minh City, Vietnam
%}
%\affil{\textsuperscript{3} 
%Department of Mathematics, College of Natural Sciences, Can Tho University, Can Tho City, Vietnam
%}}
%\maketitle

\title[A von Neumann-Jordan Constant of Non-Normable Metrics]{A von Neumann-Jordan Constant of Non-Normable Metrics}

\author[1,2]{\fnm{Doan} \sur{Huu Hieu}}\email{huuhieudoan2k@gmail.com}
%\equalcont{These authors contributed equally to this work.}

\author*[3]{\fnm{Nguyen} \sur{Duy Cuong}}\email{ndcuong@ctu.edu.vn}
%\equalcont{These authors contributed equally to this work.}

\affil[1]{\orgdiv{Faculty of Mathematics and Computer Science}, \orgname{University of Science}, \city{Ho Chi Minh City},  \country{Vietnam}}

\affil[2]{\orgdiv{Vietnam National University}, \city{Ho Chi Minh City},  \country{Vietnam}}

\affil[3]{\orgdiv{Faculty of Mathematics}, \orgname{College of Natural Sciences, Can Tho University}, \city{Can Tho City},  \country{Vietnam}}

\abstract{The paper studies a generalized von Neumann-Jordan constant of non-normable metrics on vector spaces. 
To the best of our knowledge, all existing results of the von Neumann-Jordan constant and its generalizations have been established only in the normed setting. 
We identify reasonable conditions on non-normable metrics under which results known for norms remain valid.
Several examples and counterexamples are provided to justify the established results. 
The computation for a class of non-normable metrics on product spaces is also investigated. 
In particular, we give precise formulas for the generalized von Neumann-Jordan constant of $p$-metrics under a metric-type Clarkson inequality.
Comparisons with existing results are discussed throughout the paper whenever applicable.}

\keywords{metric linear space, inner product, von Neumann-Jordan constant, Clarkson inequality, Banach geometry}

\pacs[MSC Classification]{{\color{blue} 46B20, 46B25, 46C15, 46C50, 52A41}}

\maketitle

\section{Introduction}\label{S1}

In the pioneering work~\cite{JorNeu35}, Jordan and von Neumann proved that a norm on a vector space satisfies the parallelogram identity if and only if it is induced by an inner product.
To quantify how far a norm deviates from this characterization, the authors associated to each norm a constant, now known as the \textit{von Neumann-Jordan constant}, taking values in the interval $[1,2]$.
More precisely, this constant equals one if and only if the norm comes from an inner product, and the larger this constant is, the less the norm can be approximated by such a structure.
Not long thereafter, Clarkson~\cite{Cla36,Cla37} obtained explicit formulas for this constant for the standard norms in Lebesgue spaces and used it to study the uniform convexity of these spaces. 
The aforementioned results have now become classic and laid the foundation for subsequent developments in  Banach space geometry; see \cite{Bea82,JohLin01} and the references therein.

There are several important directions related to the von Neumann-Jordan constant. 
Some works have computed this constant for particular Banach spaces: Lebesgue-Bochner  \cite{KatTak98}, Lorentz sequence \cite{KatMal01}, and Cesàro sequence \cite{Sae10}.
The study of new geometric constants related to the von Neumann-Jordan constant has also attracted considerable attention \cite{CuiHuaHudKac15,WanCuiZha15,DhoPirSae03,YanLi10,Wan10,WanPan09}.
Some studies are devoted to  the applicability of this constant in studying geometric properties of Banach spaces such as uniform convexity \cite{KatTak97+}, uniform nonsquareness \cite{TakKat98}, normal structure \cite{JimLloSae06}, and fixed point results \cite{DhoDomKaeKaePan06,Sae06}.

We recall that the von Neumann-Jordan constant concerns geometric properties of a given norm rather than the topology induced by the norm. 
While topology is essential for the study of convergence, continuity, approximations and other properties, the constant indicates how closely a given norm behaves like an inner product.
To justify this claim, we note that on a finite dimensional space all norms are topologically equivalent; nevertheless, their geometric properties can differ substantially. 
For example, the von Neumann-Jordan constant of the sum and max norms equals two, whereas for the Euclidean norm it is one. 
The reader is referred to \cite{IkeKat14,Cuo25,SaiKatTak00,SaiKatTak00+} and the references therein for the computation of this constant for various norms in finite and infinite dimensional spaces.

Given a metric on a vector space, the  pair is called a \emph{metric linear space} if the metric is compatible with the vector space structure in the sense that the addition and scalar multiplication operations are continuous with respect to the metric topology and the corresponding product topologies \cite{Rol84}.
A metric on a vector space is called \emph{translation-invariant} (or \emph{shift-invariant} \cite{Don21}) if the distance between any two points is preserved under the translation by the same vector.
In \cite{Kak36}, Kakutani proved that every metric linear space admits a translation-invariant metric that is topologically equivalent to the original one.
Thus, from a topological viewpoint, it suffices to assume that the original metric is translation-invariant when studying metric linear spaces \cite{Alb79,Mab03}. 
Note that the translation-invariance implies the continuity of the addition operation.

It is standard that a norm on a vector space induces a metric by taking the norm of the difference between any two points.
On the other hand, given a metric on a vector space, one can define an associated one-variable function by fixing one argument at the zero vector.
 In the context of metric linear spaces equipped with a translation-invariant metric, such a function has been studied in the literature under different names such as \textit{quasinorm} \cite{Yos95} or $F$-\textit{norm} \cite{Rol84,Rol72}.
In these settings, this function is generally not a norm in the conventional sense as it may fail to satisfy the absolute homogeneity. 
When it does satisfy the requirements of a norm, the corresponding metric is called a \emph{normable} metric \cite{SinNar20}.

In a recent study \cite{CuiHuaHudKac15}, the authors introduced a generalized von Neumann-Jordan constant with an order ranging from one to infinity and used it to study the uniform nonsquare property and fixed point results in Banach spaces.
When the order equals two, this constant reduces to the conventional one. 
The generalized constant has been further investigated in \cite{WanCuiZha15,RahGun21,AmiKho24}.
To the best of our knowledge, all available studies concerning the von Neumann-Jordan constant and its generalizations have been established only in the normed setting. 
Following the idea initiated in \cite{CuiHuaHudKac15}, we attempt to systematically study a generalized von Neumann-Jordan constant for non-normable metrics on vector spaces.
Comparisons with existing results of this type are  discussed throughout the paper.

The compatibility between the topological and algebraic structures provides a convenient framework for various studies. 
However, in some situations not all continuity assumptions are required. 
For example, the Lyusternik-Graves theorem \cite[Theorem~5.2]{Don21} in the field of variational analysis requires only a vector space equipped with a translation-invariant metric and does not need the scalar multiplication continuity.
In view of this observation and the fact that the von Neumann-Jordan constant 
mainly concerns geometric behavior rather than the topology involved, our setting in this paper is very general: we consider a (not necessarily translation-invariant) metric on a vector space without imposing any continuity compatibility conditions.

If a metric is normable, then all existing results concerning the von Neumann-Jordan constant and its generalizations are applicable. 
In this paper, we focus on the case of non-normable metrics. 
Our results are obtained under reasonable assumptions, and
we demonstrate that previously known results of this type arise as particular cases of our framework.
We would like to note that this work is not a purely formal extension from the normed to the metric setting, but rather an attempt to identify the weakest possible conditions under which results known for norms remain valid in the metric framework, and consequently,
to have a deeper understanding of the existing results and techniques in the normed setting.
Throughout the paper, several examples and counterexamples are provided to justify the established results.
In each example, we also explicitly examine whether any compatibility condition holds between the topological and algebraic structures.

The next Section~\ref{S2} presents basic definitions and results used throughout the paper. 
In particular, we prove a list of equivalent conditions under which a translation-invariant metric on a vector space is normable.
Section~\ref{S3} studies a generalized von Neumann-Jordan constant for non-normable metrics. 
We establish lower and upper bounds for this constant under reasonable assumptions and demonstrate that several non-normable metrics satisfy our results.
In addition, we establish a metric-type parallelogram law extending the classical results of Clarkson and von Neumann from the normed to the metric setting.
Moreover, a list of equivalent parameterized formulations of the generalized constant is provided.
The final Section~\ref{S4} is dedicated to the product setting. 
We provide estimates (exact in some cases) for the constant associated with metrics on a product of vector spaces constructed via a collection of continuous convex functions  on the standard simplex of $\mathbb{R}^n$.
Explicit formulas of the constant for the $p$-metrics under a metric-type Clarkson inequality is also presented.

\section{Preliminaries}\label{S2}
In this paper, we assume that $X$ is a vector space over a field $K\subset \mathbb{R}$ and write $(X,K)$ when it is necessary to emphasize the underlying field.
When $K=\mathbb{R}$, we simply say that $X$ is a vector space. 
The zero vector in $X$ is denoted by $0_X$.
We denote by $\mathbb{N},\mathbb{Q},\R_+,\R$ and $\mathbb{C}$ the sets of positive integers, rational, nonnegative real, real, and complex numbers, respectively.
The vector spaces $\mathbb{R}$ and $\mathbb{C}$ are assumed to be equipped with the standard norms, and we write $\infty$ instead of $+\infty$.

A set $\mathcal{B} \subset \mathbb{R}^n$ is called a basis of $\mathbb{R}^n$ over $K$ if $\mathcal{B}$ is linearly independent over $K$ (i.e., every finite subset of $\mathcal{B}$ is linearly independent over $K$) and every $x \in \mathbb{R}^n$ can be expressed as a finite linear combination of elements of $\mathcal{B}$ \cite[p.~78]{Mar09}. 
For any $x \in \mathbb{R}^n$, such a representation is unique \cite[Lemma 4.1.1]{Mar09}. 
In particular, every basis $\mathcal{B}$ of $\mathbb{R}^n$ over $\mathbb{Q}$ has cardinality equal to the continuum \cite[Theorem 4.2.3]{Mar09}.

The next definition recalls some basic properties needed for the study \cite{Pen13}.
\begin{definition}
Let $X$ be a vector space, and $\varphi: X\to\R$.
The function $\varphi$ is
\begin{enumerate}[\rm (i)]
\item\label{D1.1-1}
\textit{midpoint convex} if $\varphi\left(\frac{x+y}{2}\right)\le\frac{\varphi(x)+\varphi(y)}{2}$
for all $x,y\in X$;	
\item \label{D1.1-2}
\textit{convex} if $\varphi(\lambda x+(1-\lambda)y)\le \lambda \varphi(x)+(1-\lambda)\varphi(y)$  for all $x,y\in X$ and $\lambda\in[0,1]$;
\item \label{D1.1-3}
\textit{even} if $\varphi(-x)=\varphi(x)$ for all $x\in X$;
\item \label{D1.1-4}
$\lambda$-\textit{homogeneous} for some $\lambda\in\mathbb{R}$ if $\varphi(\lambda x)=\lambda \varphi(x)$ for all $x\in X$;
\item \label{D1.1-5}
\textit{positively homogeneous} if $\varphi(\lambda x)=\lambda \varphi(x)$ for all $x\in X$ and $\lambda\ge 0$;
\item \label{D1.1-6}
\textit{absolutely homogeneous} if $\varphi(\lambda x)=|\lambda|\varphi(x)$ for all $x\in X$ and $\lambda\in\R$;
\item \label{D1.1-7}
\textit{subadditive} if $\varphi(x+y)\le \varphi(x)+\varphi(y)$ for all $x,y\in X$;
\item \label{D1.1-8}
\textit{additive} if $\varphi(x+y)= \varphi(x)+\varphi(y)$ for all $x,y\in X$.
\end{enumerate}
\end{definition}	

\begin{remark}\label{R1.2}
\begin{enumerate}[\rm (i)]
\item\label{R1.2-1}
It is clear that \eqref{D1.1-2} $\Rightarrow$ \eqref{D1.1-1}. 
The converse implication holds when $\varphi$ is continuous
\cite{Jen06,TamTsu21}. 
If $\varphi$ is midpoint convex and 2-homogeneous, then it is subadditive.
The function $\varphi$ is absolutely homogeneous if and only if it is even and positively homogeneous.
\item\label{R1.2-2}
The function $\varphi$ is $\lambda$-homogeneous for some $\lambda\ne 0$ if and only if  $\varphi(\lambda^kx)=\lambda^k\varphi(x)$ for all $x\in X$ and integers $k$.
The statement is obvious for nonnegative integers.
If $k$ is a negative integer, then $\varphi(\lambda^kx)=\lambda^k\cdot\lambda^{-k}\varphi(\lambda^kx)=\lambda^k\varphi(\lambda^{-k}\lambda^kx)=\lambda^k\varphi(x)$ for all $x\in X$.
\item\label{R1.2-3}
If $\varphi:\R^n\to \R$ is additive, then $\varphi$ is  $\lambda$-homogeneous for any $\lambda\in\mathbb{Q}$; see \cite[Theorem 5.2.1]{Mar09}.
\end{enumerate}
\end{remark}

Let $d$ be a metric on a vector space $X$.
From now on, we consider the function $f:X\to\mathbb{R}_+$ given by
\begin{gather}\label{f}
f(x):=d(x,0_X)\quad \text{for all } x\in X.
\end{gather}
If $f$ is a norm, then $d$ is said to be {normable} \cite{SinNar20}; otherwise, $d$ is called {non-normable}.
The metric $d$ is called {translation-invariant} if $d(x+z,y+z)=d(x,y)$ for all $x,y,z\in X$.

\begin{proposition}\label{P2.3}
The following assertions hold:
\begin{enumerate}[\rm (i)]
\item\label{P2.3-1}
the function $f$ is 
%\todo{nonexpansive}
Lipschitz continuous with modulus 1;
\item\label{P2.3-2}
if $d$ is translation-invariant, then $f$ is subadditive and even.
\end{enumerate}	
\end{proposition}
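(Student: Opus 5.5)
Both assertions follow directly from the metric axioms applied to $f(x)=d(x,0_X)$ from \eqref{f}, so the plan is a short verification.

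For \eqref{P2.3-1}, fix $x,y\in X$ and use the triangle inequality through the point $0_X$:
\[
f(x)=d(x,0_X)\le d(x,y)+d(y,0_X)=d(x,y)+f(y).
\]
Exchanging the roles of $x$ and $y$, and using the symmetry $d(y,x)=d(x,y)$, gives $f(y)\le d(x,y)+f(x)$. Together these yield $|f(x)-f(y)|\le d(x,y)$, which is Lipschitz continuity of $f$ with modulus $1$ with respect to $d$. No translation-invariance is needed here, which matches the statement.

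For \eqref{P2.3-2}, assume $d$ is translation-invariant.

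\emph{Evenness.} Translate both arguments by $-x$ and then use symmetry:
\[
f(x)=d(x,0_X)=d(0_X,-x)=d(-x,0_X)=f(-x).
\]

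\emph{Subadditivity.} Insert the intermediate point $x$ in the triangle inequality:
\[
f(x+y)=d(x+y,0_X)\le d(x+y,x)+d(x,0_X).
\]
Translating by $-x$ gives $d(x+y,x)=d(y,0_X)=f(y)$, so $f(x+y)\le f(x)+f(y)$.

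There is no real obstacle. The only point needing care is choosing the intermediate point and the translation vector correctly, namely $x$ and $-x$ in the subadditivity step.
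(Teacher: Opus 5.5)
Your proof is correct and follows the paper's argument: (i) uses the triangle inequality through $y$ plus symmetry, and (ii) combines translation-invariance with the triangle inequality for evenness and subadditivity. The paper inserts $y$ rather than $x$ as the intermediate point and translates by $x$ rather than $-x$ for evenness, which is immaterial, and it also notes $|f(x)-f(0_X)|=d(x,0_X)$, showing the constant $1$ is attained, a line you could add if ``modulus $1$'' is read as the sharp constant.
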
	

\begin{proof}
\begin{enumerate}[\rm (i)]
\item
By the triangle inequality, $d(x,0_X)\le d(x,y)+d(y,0_X)$ and $d(y,0_X)\le d(y,x)+d(x,0_X)$, and consequently,
$|f(x)-f(y)|\le d(x,y)$ for all $x,y\in X$.
Observe that
$|f(x)-f(0_X)| = d(x,0_X)$ for any $x\in X$.
This justifies the statement.
\item 
By the translation-invariance, $f(-x)=d(-x,0_X)=d(0_X,x)=d(x,0_X)=f(x)$ for all $x\in X$ and $f(x+y)=d(x+y,0_X)\le d(x+y,y)+d(y,0_X)=f(x)+f(y)$ for all  $x,y\in X$.
\end{enumerate}		
The proof is complete.
\end{proof}	

\begin{theorem}\label{P5.3}
Suppose that $d$ is translation-invariant.
The following assertions are equivalent:
\begin{enumerate}[\rm (i)]
\item\label{P5.3-1}
$f$ is midpoint convex;
\item\label{P5.3-2}
$f$ is convex;
\item\label{P5.3-3}
$f(\lambda x)\le \lambda f(x)$ for all $x\in X$ and 
$\lambda\in[0,1]$;
\item\label{P5.3-4}
$f\left(\frac{x}{2}\right)\le\frac{f(x)}{2}$ for all $x\in X$;
\item\label{P5.3-6}
$f$ is positively homogeneous;
\item\label{P5.3-5}
$f$ is absolutely homogeneous.
\end{enumerate}	
\end{theorem}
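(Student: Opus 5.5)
The plan is a cycle of implications, most of them immediate from Proposition~\ref{P2.3}\eqref{P5.3-2}: since $d$ is translation-invariant, $f$ is subadditive and even, and $f(0_X)=0$.

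\textbf{The easy implications.}
\begin{itemize}
\item \eqref{P5.3-5}$\Rightarrow$\eqref{P5.3-6}$\Rightarrow$\eqref{P5.3-3}$\Rightarrow$\eqref{P5.3-4} hold by specialisation; the last one takes $\lambda=\tfrac12$.
\item \eqref{P5.3-6}$\Rightarrow$\eqref{P5.3-5} holds because $f$ is even (Remark~\ref{R1.2}\eqref{R1.2-1}).
\item \eqref{P5.3-5}$\Rightarrow$\eqref{P5.3-2}: $f(\lambda x+(1-\lambda)y)\le f(\lambda x)+f((1-\lambda)y)=\lambda f(x)+(1-\lambda)f(y)$ by subadditivity and homogeneity.
\item \eqref{P5.3-2}$\Rightarrow$\eqref{P5.3-1} is trivial.
\item \eqref{P5.3-1}$\Rightarrow$\eqref{P5.3-4}: put $y=0_X$ in midpoint convexity.
\end{itemize}
It therefore remains to prove \eqref{P5.3-4}$\Rightarrow$\eqref{P5.3-6}.

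\textbf{Dyadic step.} Assume \eqref{P5.3-4}. Subadditivity gives $f(x)=f(\tfrac x2+\tfrac x2)\le 2f(\tfrac x2)$, so $f(\tfrac x2)=\tfrac12 f(x)$. Iterating, and using Remark~\ref{R1.2}\eqref{R1.2-2}, gives $f(2^kx)=2^kf(x)$ for all integers $k$. Now fix $0\le m\le 2^k$.
\begin{itemize}
\item Subadditivity gives $f(m2^{-k}x)\le m f(2^{-k}x)=m2^{-k}f(x)$.
\item Writing $x=m2^{-k}x+(2^k-m)2^{-k}x$ gives $f(x)\le f(m2^{-k}x)+(2^k-m)2^{-k}f(x)$.
\end{itemize}
Together these give $f(rx)=rf(x)$ for every dyadic rational $r\in[0,1]$. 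Scaling by powers of two extends this to all nonnegative dyadic $r$.

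\textbf{Passage to real $\lambda\ge0$ (main obstacle).} To extend from dyadic to arbitrary real $\lambda\ge 0$, I would choose dyadic $r<\lambda<s$ and use subadditivity:
\[
f(\lambda x)\le rf(x)+f((\lambda-r)x),\qquad f(\lambda x)\ge sf(x)-f((s-\lambda)x).
\]
It would then suffice to show that $f(tx)\to0$ as $t\downarrow0$.

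This is the step I expect to fail as stated. The Lipschitz property in Proposition~\ref{P2.3}\eqref{P2.3-1} only controls $f$ with respect to $d$, not along the scalar direction $t\mapsto tx$. So continuity in $t$ is not available without some compatibility between $d$ and scalar multiplication.

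Indeed, I suspect a counterexample. Take an injective $\mathbb Q$-linear (discontinuous) additive map $g:\mathbb R\to\mathbb R$, obtained from a Hamel basis. Then $d(x,y)=|g(x-y)|$ is a translation-invariant metric, and $f=|g|$ satisfies \eqref{P5.3-1} and \eqref{P5.3-4}. However, $f$ is not positively homogeneous for irrational $\lambda$.

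So my plan would be the following. First, attempt the limit argument above, checking whether some hidden hypothesis (for instance, continuity of $t\mapsto tx$ in the metric topology, or the underlying field $K$) supplies $f(tx)\to0$. If no such hypothesis is available, prove the equivalences with \eqref{P5.3-3} and \eqref{P5.3-6} restricted to dyadic (or rational) scalars. In that case the chain above goes through verbatim, with convexity in \eqref{P5.3-2} also read over those scalars.
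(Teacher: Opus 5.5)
Your diagnosis is correct. In the paper's setting (real scalars, and no compatibility assumed between $d$ and scalar multiplication) the statement is false. The link that breaks is exactly the one you isolate: (i)/(iv) $\Rightarrow$ (v).

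The paper's proof does not supply the missing ingredient. It covers this link with ``By Proposition~\ref{P2.3}(i), $f$ is continuous. It is clear that (i) $\Leftrightarrow$ (ii)'', i.e.\ it invokes Jensen's theorem from Remark~\ref{R1.2}(i). Jensen's theorem needs continuity, or at least local boundedness, of the real function $t\mapsto f(tx+(1-t)y)$ on $[0,1]$. The $1$-Lipschitz property of $f$ with respect to $d$ gives this only if $t\mapsto tu$ is $d$-continuous, which is precisely the compatibility you note is unavailable. All the other implications in the paper's proof are sound.

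Your counterexample works. To make it concrete:
\begin{enumerate}[\rm (a)]
\item Take a Hamel basis of $\mathbb{R}$ over $\mathbb{Q}$ containing $1$ and $\sqrt2$ (Lemma~\ref{L2.10}), and let $g$ be the $\mathbb{Q}$-linear map that swaps $1$ and $\sqrt2$ and fixes the other basis vectors.
\item Then $g$ is additive and injective, so $d(x,y)=|g(x-y)|$ is a translation-invariant metric.
\item The function $f=|g|$ satisfies $f(x/2)=f(x)/2$ and is midpoint convex.
\item Yet $f(\sqrt2\cdot 1)=1\ne 2=\sqrt2 f(1)$, and (iii) fails for $x=\sqrt2$, $\lambda=1/\sqrt2$, since $f(1)=\sqrt2>1/\sqrt2$.
\end{enumerate}

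In fact the paper refutes its own theorem. In Example~\ref{Exam2.5} the metric $d(x,y)=\|x-y\|+|\varphi(x-y)|$ is translation-invariant, and $f(x)=\|x\|+|\varphi(x)|$ satisfies $f(x/2)=f(x)/2$. But $f(\sqrt2\mathbf{e})=\sqrt2\ne 2\sqrt2=\sqrt2 f(\mathbf{e})$. Example~\ref{E3.13} behaves the same way. Consequently Remark~\ref{R2.7}(ii) is also wrong as stated.

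For the repair, do not retreat to dyadic scalars across the board. The paper's argument for (iii) $\Rightarrow$ (v) uses only subadditivity and is valid:
\begin{enumerate}[\rm (a)]
\item For $\lambda>1$, (iii) applied with $1/\lambda$ gives $\lambda f(x)\le f(\lambda x)$.
\item Writing $\lambda=m+\xi$ with $m=\lfloor\lambda\rfloor$ and $\xi\in[0,1)$ gives $f(\lambda x)\le mf(x)+f(\xi x)\le\lambda f(x)$.
\item The case $\lambda\in(0,1)$ follows by inverting.
\end{enumerate}
Combined with your implications (in particular your direct (vi) $\Rightarrow$ (ii)), this gives the correct picture. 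Conditions (ii), (iii), (v), (vi) are mutually equivalent, and (i) and (iv) are equivalent. The first group implies the second, but not conversely.

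Your dyadic sandwich then shows exactly what is missing: (v) holds if and only if (iv) holds and $f(tu)\to0$ as $t\downarrow0$ for every $u\in X$. It is even enough that $t\mapsto f(tu)$ be bounded on $[0,1]$, since dyadic homogeneity gives $f(tu)=2^{-k}f(2^ktu)$. This condition is automatic when scalar multiplication is $d$-continuous, e.g.\ in a metric linear space.
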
	

\begin{proof}
By Proposition~\ref{P2.3}\eqref{P2.3-1}, $f$ is continuous.	
It is clear that \eqref{P5.3-1} $\Leftrightarrow$ \eqref{P5.3-2} $\Rightarrow$ \eqref{P5.3-3} $\Rightarrow$ \eqref{P5.3-4}.
By Proposition~\ref{P2.3}\eqref{P2.3-2},  $f$ is subadditive and even.
If condition \eqref{P5.3-4} is satisfied, thanks to the subadditivity, $f$ is midpoint convex.
Thus, \eqref{P5.3-1}-\eqref{P5.3-4} are equivalent.
The evenness implies that \eqref{P5.3-6} and \eqref{P5.3-5} are equivalent.

We now show that \eqref{P5.3-3} and \eqref{P5.3-6} are equivalent.
It is clear that \eqref{P5.3-6} $\Rightarrow$ \eqref{P5.3-3}.
Assume that  assertion \eqref{P5.3-3} is satisfied.
Let $x\in X$ and $\lambda\ge 0$.
If $\lambda>1$, then $f(x)=f\left(\frac{1}{\lambda}\cdot\lambda x\right)\le \frac{f(\lambda x)}{\lambda}$, and consequently, $\lambda f(x)\le f(\lambda x)$.
We have $\lambda=m+\xi$ for some $m\in\N$ and $\xi\in(0,1)$.
By the subadditivity,
\begin{align*}
f(\lambda x)
=f(mx+\xi x)
\le mf(x)+f(\xi x)\le m f(x)+\xi f(x)=\lambda f(x).
\end{align*}	
Hence, $f(\lambda x)=\lambda f(x)$.
If $\lambda\in(0,1]$, we only need to show that $f(\lambda x)\ge\lambda f(x)$.
Suppose that $f(\lambda x)<\lambda f(x)$.
By the above argument, $f(x)=f\left(\frac{1}{\lambda}\cdot\lambda x\right)
=\frac{f(\lambda x)}{\lambda}<\frac{\lambda f(x)}{\lambda}=f(x),$
a contradiction.
Thus, condition \eqref{P5.3-6} is satisfied.
This completes the proof.
\end{proof}	

\begin{remark}
The equivalence between \eqref{P5.3-2}, \eqref{P5.3-3} and \eqref{P5.3-6} is studied in \cite[Lemmas~2 \& 3]{SinNar20}.
\end{remark}	

\begin{proposition}\label{P1.5}
Let $d$ be a metric on a vector space $X$, and $f$ be given by \eqref{f}.
The function $f$ is a norm on $X$ if and only if it is midpoint convex and absolutely homogeneous.
\end{proposition}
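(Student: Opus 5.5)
The plan is to verify the two implications directly from the definitions. Note that $d$ is not assumed translation-invariant here, so Proposition~\ref{P2.3}\eqref{P2.3-2} and Theorem~\ref{P5.3} are unavailable. The norm axioms have to be checked by hand.

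\emph{Necessity.} Assume $f$ is a norm. Absolute homogeneity is one of the norm axioms. For midpoint convexity, I combine subadditivity with homogeneity at $\lambda=\frac12$:
\[
f\left(\tfrac{x+y}{2}\right)\le f\left(\tfrac{x}{2}\right)+f\left(\tfrac{y}{2}\right)=\tfrac{f(x)+f(y)}{2}.
\]

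\emph{Sufficiency.} Assume $f$ is midpoint convex and absolutely homogeneous. There are three norm axioms to check.
\begin{enumerate}[\rm (a)]
\item Nonnegativity and definiteness: $f(x)=d(x,0_X)\ge 0$, and $f(x)=0$ if and only if $x=0_X$. Both follow from the metric axioms for $d$.
\item Absolute homogeneity: this holds by assumption.
\item Triangle inequality: apply midpoint convexity to the points $2x$ and $2y$, and then use absolute homogeneity with $\lambda=2$:
\[
f(x+y)=f\left(\tfrac{2x+2y}{2}\right)\le \tfrac{f(2x)+f(2y)}{2}=f(x)+f(y).
\]
\end{enumerate}
This is exactly the observation in Remark~\ref{R1.2}\eqref{R1.2-1} that midpoint convexity together with 2-homogeneity gives subadditivity.

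There is no real obstacle. The only point needing care is that this argument does not use translation-invariance, so the proof must not cite the evenness or subadditivity results from Proposition~\ref{P2.3}. Every step above relies only on the stated hypotheses and the metric axioms.
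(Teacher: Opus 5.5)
Your proof is correct and takes the same approach as the paper: the nontrivial direction reduces to the triangle inequality, which follows from midpoint convexity combined with homogeneity at the factor $2$. The paper writes $f(x+y)=2f\bigl(\frac{x+y}{2}\bigr)\le f(x)+f(y)$, applying homogeneity first and then midpoint convexity at $x,y$, whereas you apply midpoint convexity at $2x,2y$ first; this reordering is immaterial, and your explicit check of the direct implication, which the paper calls obvious, is also fine.
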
	

\begin{proof}
The direct implication is obvious.
Suppose that $f$ is midpoint convex and 
absolutely homogeneous.
We only need to verify the triangle inequality since the other conditions are obviously satisfied.
We have
\begin{align*}
f(x+y)
=f\left(2\cdot\dfrac{x+y}{2}\right)
=2f\left(\dfrac{x+y}{2}\right)
\le 2\cdot\frac{f(x)+f(y)}{2}=f(x)+f(y).
\end{align*}	
for all $x,y\in X$.
Thus, $f$ is a norm on $X$.
\end{proof}	

\begin{remark}\label{R2.7}
\begin{enumerate}[\rm (i)]
\item\label{R2.7-1}
A result analogous to Proposition~\ref{P1.5} can be found in \cite[Theorem~4]{SinNar20}.
\item\label{R2.7-2}
In view of Proposition~\ref{P1.5}, if $d$ is translation-invariant and any of the conditions \eqref{P5.3-1}-\eqref{P5.3-5} in Theorem~\ref{P5.3} is satisfied, then $f$ is a norm on $X$.
In Sections~\ref{S3} and~\ref{S4}, we study the function $f$ under certain reasonable conditions without assuming the translation-invariance.
\end{enumerate}
\end{remark}	

The following elementary result is needed for the subsequent analysis.
Since we were unable to find a reference in the literature, we provide a proof for the sake of completeness.
\begin{lemma}\label{L1.5}
Let $\al>0$ and $n\in\N$.
The following assertions hold: 	
\begin{enumerate}[\rm (i)]
\item\label{L1.5-1}
if $\al\in(0,1]$, then 
\begin{gather*}
\left(\sum_{i=1}^{n}a_i\right)^\al\le \sum_{i=1}^{n}a^\al_i\le n^{1-\al}\left(\sum_{i=1}^{n}a_i\right)^\al
\end{gather*}	
for all $a_1,\ldots,a_n\ge 0$;
\item\label{L1.5-2}
 if $\al\in[1,\infty)$, then
 \begin{gather*}
 \sum_{i=1}^{n}a^\al_i\le \left(\sum_{i=1}^{n}a_i\right)^\al\le n^{\al-1}\sum_{i=1}^{n}a^\al_i
 \end{gather*}	
for all $a_1,\ldots,a_n\ge 0$;
\item\label{L1.5-3}
if $\al\in(0,1]$, then 
\begin{gather*}
\left(\sum_{i=1}^{n}(a_i+b_i)^\alpha\right)^\frac{1}{\alpha} \ge \left(\sum_{i=1}^{n}a_i^\alpha\right)^\frac{1}{\alpha}+\left(\sum_{i=1}^{n}b_i^\alpha\right)^\frac{1}{\alpha}
\end{gather*}	
 for all $a_1,\ldots,a_n,b_1,\ldots,b_n\ge 0$.
\end{enumerate}	
\end{lemma}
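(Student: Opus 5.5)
Parts \eqref{L1.5-1} and \eqref{L1.5-2} follow from two standard tools: superadditivity (respectively subadditivity) of $t\mapsto t^\al$ for the comparisons without constants, and concavity (respectively convexity) of $t\mapsto t^\al$ via Jensen's inequality for the comparisons with the factor $n^{\pm(1-\al)}$. Part \eqref{L1.5-3} is the reverse Minkowski inequality for exponents in $(0,1]$, which I will derive from concavity of a positively homogeneous function.

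For \eqref{L1.5-1}, first dispose of the trivial case $S:=\sum_i a_i=0$. Otherwise set $t_i=a_i/S\in[0,1]$. Since $\al\le1$, we have $t_i^\al\ge t_i$, so $\sum_i t_i^\al\ge\sum_i t_i=1$; multiplying by $S^\al$ gives the left inequality. For the right inequality, concavity of $t\mapsto t^\al$ and Jensen give
\begin{gather*}
\frac1n\sum_{i=1}^n a_i^\al\le\left(\frac1n\sum_{i=1}^n a_i\right)^\al,
\end{gather*}
which rearranges to $\sum_i a_i^\al\le n^{1-\al}S^\al$.

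For \eqref{L1.5-2}, the same normalization with $\al\ge1$ gives $t_i^\al\le t_i$, hence $\sum_i t_i^\al\le1$, which is the left inequality. Convexity of $t\mapsto t^\al$ and Jensen give $(S/n)^\al\le\frac1n\sum_i a_i^\al$, which is the right inequality. Alternatively, both parts could be obtained from one another by substituting $b_i=a_i^\al$ and $\be=1/\al$, but direct proofs are just as short.

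For \eqref{L1.5-3}, the key observation is that $F(a):=\left(\sum_i a_i^\al\right)^{1/\al}$ is positively homogeneous and concave on $\R_+^n$ when $\al\in(0,1]$. A concave positively homogeneous function is superadditive:
\begin{gather*}
F(a+b)=2F\left(\tfrac{a+b}{2}\right)\ge F(a)+F(b).
\end{gather*}
Proving the concavity of $F$ is the main obstacle. I would avoid Hessians and instead use a normalization argument. Let $A=F(a)$ and $B=F(b)$, assuming both are positive (if either is zero, the inequality is immediate from monotonicity of $F$). Write
\begin{gather*}
\frac{a_i+b_i}{A+B}=\lambda\frac{a_i}{A}+(1-\lambda)\frac{b_i}{B},\qquad \lambda=\frac{A}{A+B}.
\end{gather*}
By concavity of $t\mapsto t^\al$,
\begin{gather*}
\sum_i\left(\frac{a_i+b_i}{A+B}\right)^\al\ge\lambda\sum_i\frac{a_i^\al}{A^\al}+(1-\lambda)\sum_i\frac{b_i^\al}{B^\al}=\lambda+(1-\lambda)=1.
\end{gather*}
Raising both sides to the power $1/\al$ yields $F(a+b)\ge A+B$, which is exactly \eqref{L1.5-3}. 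This normalization argument proves superadditivity directly and bypasses the separate concavity claim.
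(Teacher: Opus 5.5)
Your proof is correct and follows the paper's route: Jensen's inequality gives the bounds with $n^{1-\alpha}$ and $n^{\alpha-1}$, and (iii) uses the same convex-combination argument with weight $\lambda=A/(A+B)$ and concavity of $t\mapsto t^\alpha$ (the paper writes $a_i+b_i=\gamma\frac{a_i}{\gamma}+(1-\gamma)\frac{b_i}{1-\gamma}$ without dividing by $A+B$). The only deviation is in the constant-free inequalities of (i) and (ii): you normalize by $S=\sum_i a_i$ and use $t^\alpha\ge t$ (resp.\ $t^\alpha\le t$) on $[0,1]$, whereas the paper inducts on $n$ using $(1+t)^\alpha\le 1+t^\alpha$ (resp.\ $\ge$), so your version handles all $n$ at once and is slightly more direct.
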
	 

\begin{proof}
\begin{enumerate}[\rm (i)]
\item	
Let $\alpha\in(0,1]$. 
We prove the first inequality by induction.
It is clear that the inequality holds for $n=1$.
Suppose that it holds for $n=k$ for some $k\in\mathbb{N}$.
The  decreasing property of  $h(t):=(1+t)^\al-1-t^\al$ 
on $\R_+$ implies that
$(1+t)^\al\le 1+t^\al$ for all $t\ge 0$.
Let $a_1,\ldots,a_{k+1}\ge 0$.
If $a_{k+1}=0$, then the inequality  follows from the induction hypothesis.
If $a_{k+1}>0$, then set $t:=\frac{\sum_{i=1}^{k}a_i}{a_{k+1}}$, and consequently,
\begin{align*}
\left(\sum_{i=1}^{k+1}a_i\right)^\al
= a_{k+1}^\alpha (1+t)^\alpha
 \le a_{k+1}^\alpha(1+t^\alpha) = \left(\sum_{i=1}^{k}a_i\right)^\al+a_{k+1}^\alpha\le \left(\sum_{i=1}^{k+1}a_i\right)^\al.
\end{align*}	
This completes the proof of the first inequality.
For the second inequality, it suffices to note that
the function
$t\mapsto t^\al$ is concave on $\R_+$, and consequently,
\begin{gather*}
\frac{\sum_{i=1}^{n}a_i^\al}{n}\le \left(\frac{\sum_{i=1}^{n}a_i}{n}\right)^\al
\Leftrightarrow
 \sum_{i=1}^{n}a^\al_i\le n^{1-\al}\left(\sum_{i=1}^{n}a_i\right)^\al.
\end{gather*}	
\item 
Let  $\al\in[1,\infty)$.
Then the function $h$ is increasing on $\R_+$.
Proceeding as in the above proof, we obtain the first inequality.
The convexity of the function
$t\mapsto t^\al$ on $\R_+$ implies that 
\begin{gather*}
\left(\frac{\sum_{i=1}^{n}a_i}{n}\right)^\al\le 	
\frac{\sum_{i=1}^{n}a_i^\al}{n}
\Leftrightarrow
\left(\sum_{i=1}^{n}a_i\right)^\al\le	n^{\al-1}
\left(\sum_{i=1}^{n}a_i\right)^\al,
\end{gather*}	
which proves the second inequality.
\item
Let $\mathrm{A}:=\left(\sum_{i=1}^{n}a_i^\alpha\right)^\frac{1}{\alpha}$ and $\mathrm{B}:=\left(\sum_{i=1}^{n}b_i^\alpha\right)^\frac{1}{\alpha}$.
If either $\mathrm{A}=0$ or $\mathrm{B}=0$, then the inequality is obviously satisfied.
Suppose that  $\mathrm{A}\ne0$ and $\mathrm{B}\ne0$. 
Let $\gamma:=\frac{\mathrm{A}}{\mathrm{A}+\mathrm{B}}\in (0,1)$. 
The concavity of the function
$t\mapsto t^\al$  on $\R_+$ implies that
\begin{gather*}
\left(\gamma\cdot\frac{a_i}{\gamma}+(1-\gamma)\cdot\frac{b_i}{1-\ga}\right)^\alpha
\ge\gamma\cdot \frac{a_i^\alpha}{\gamma^\alpha}+(1-\gamma)\cdot\frac{b_i^\alpha}{(1-\gamma)^\alpha}\;\;(i=1,\ldots,n).
\end{gather*}	
Thus,
\begin{align*}
\sum_{i=1}^{n}(a_i+b_i)^\alpha
&=\sum_{i=1}^{n}\left(\gamma\cdot\frac{a_i}{\gamma}+(1-\gamma)\cdot\frac{b_i}{1-\gamma}\right)^\alpha\\
&\ge \sum_{i=1}^{n}\left(\gamma\cdot \frac{a_i^\alpha}{\gamma^\alpha}+(1-\gamma)\cdot\frac{b_i^\alpha}{(1-\gamma)^\alpha}\right)\\
&=\gamma\cdot\frac{\mathrm{A}^\alpha}{\gamma^\alpha}+(1-\gamma)\cdot\frac{\mathrm{B}^\alpha}{(1-\gamma)^\alpha}\\
&=\gamma\cdot (\mathrm{A}+\mathrm{B})^\alpha+(1-\gamma)\cdot (\mathrm{A}+\mathrm{B})^\alpha\\
&=(\mathrm{A}+\mathrm{B})^\alpha,
\end{align*}
and consequently,
$\mathrm{A}+\mathrm{B}\le\left(\sum_{i=1}^{n}(a_i+b_i)^\alpha\right)^{\frac{1}{\al}}$.
\end{enumerate}
The proof is complete.
\end{proof}

The following result is needed for the construction of examples and counterexamples in Section~\ref{S3}; see \cite[Theorems~4.2.1 and~5.2.2, Corollary~5.2.2]{Mar09}.
\begin{lemma}\label{L2.10}
Let $(\mathbb{R}^n,\mathbb{Q})$ be a vector space.
The following assertions hold:
\begin{enumerate}[\rm (i)]
\item\label{L2.10-1}
if $S \subset \mathbb{R}^n$ is a linearly independent set over $\mathbb{Q}$, then
there exists a basis $\mathcal{B}\supset S$ of $\mathbb{R}^n$ over $\mathbb{Q}$;	
\item\label{L2.10-2}
if $\mathcal{B}$ is a  basis of $\R^n$ over $\mathbb{Q}$ and $g:\mathcal{B}\to \R$, then there exists a unique additive function $\varphi: \R^n\to \R$ such that $\varphi(x)=g(x)$ for all $x\in\mathcal{B}$. 
Moreover, if $g(\mathcal{B})\subset\mathbb{Q}$, then $\varphi$  is discontinuous on $\R^n$.
\end{enumerate}
\end{lemma}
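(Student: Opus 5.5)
Lemma~\ref{L2.10} consists of standard facts about $\mathbb{R}^n$ viewed as a vector space over $\mathbb{Q}$, and the paper cites \cite{Mar09} for it. I would still prove each part directly.

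For \eqref{L2.10-1}, I use Zorn's lemma. Consider the family $\mathcal{F}$ of all subsets of $\mathbb{R}^n$ that contain $S$ and are linearly independent over $\mathbb{Q}$, ordered by inclusion. It is nonempty because $S\in\mathcal{F}$. Let $\{T_i\}$ be a chain in $\mathcal{F}$. Any finite subset of $\bigcup_i T_i$ lies in a single $T_i$, so the union is linearly independent and is an upper bound for the chain. Zorn's lemma then gives a maximal element $\mathcal{B}$. Suppose some $x\in\mathbb{R}^n$ were not a finite $\mathbb{Q}$-combination of elements of $\mathcal{B}$. Then $\mathcal{B}\cup\{x\}$ would be linearly independent, contradicting maximality. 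Hence $\mathcal{B}$ spans and is a basis containing $S$.

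For the first half of \eqref{L2.10-2}, take $x\in\mathbb{R}^n$ and write its unique representation $x=\sum_{j} q_j b_j$, a finite sum with $q_j\in\mathbb{Q}$ and $b_j\in\mathcal{B}$. Define $\varphi(x):=\sum_j q_j g(b_j)$.
\begin{itemize}
\item \emph{Well-defined and additive.} Uniqueness of the representation makes $\varphi$ well defined. Representations add coordinatewise, so $\varphi$ is $\mathbb{Q}$-linear and in particular additive.
\item \emph{Uniqueness.} Let $\psi$ be any additive function that agrees with $g$ on $\mathcal{B}$. By Remark~\ref{R1.2}\eqref{R1.2-3}, $\psi$ is $\mathbb{Q}$-homogeneous, so $\psi(x)=\sum_j q_j g(b_j)=\varphi(x)$.
\end{itemize}

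For the discontinuity claim, assume $g(\mathcal{B})\subset\mathbb{Q}$. By construction $\varphi(\mathbb{R}^n)\subset\mathbb{Q}$. I argue by contradiction, supposing $\varphi$ is continuous.
\begin{itemize}
\item A continuous map from the connected space $\mathbb{R}^n$ into $\mathbb{Q}$ is constant. Since $\varphi(0)=0$, we get $\varphi\equiv 0$.
\item So $g\equiv 0$ on $\mathcal{B}$. This already contradicts the statement whenever $g$ is not identically zero, but it does not rule out $g\equiv 0$.
\end{itemize}
For $g\equiv 0$ the statement is false as written, because $\varphi\equiv 0$ is continuous. So the statement needs the implicit nondegeneracy assumption $g\not\equiv 0$ on $\mathcal{B}$, and with that assumption the argument above gives the contradiction.

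I should also check the situation the paper actually uses: $g$ rational-valued and not identically zero. Continuous additive maps are $\mathbb{R}$-linear, so their image is either $\{0\}$ or all of $\mathbb{R}$. A nonzero $\mathbb{Q}$-valued map cannot have image $\mathbb{R}$, so the same conclusion follows independently of the connectedness argument.

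The main obstacle is precisely this degenerate case $g\equiv 0$ in \eqref{L2.10-2}. I would handle it by stating the nontriviality hypothesis explicitly and then applying the connectedness argument.
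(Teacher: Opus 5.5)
The paper gives no proof of this lemma; it only cites Kuczma (Theorems~4.2.1, 5.2.2 and Corollary~5.2.2). Your write-up is therefore a self-contained replacement for that citation, and it follows the standard textbook route. You use Zorn's lemma to extend $S$ to a basis. You define $\varphi$ through the unique finite $\mathbb{Q}$-representation. Uniqueness then comes from the $\mathbb{Q}$-homogeneity of additive maps in Remark~\ref{R1.2}\eqref{R1.2-3}. All of these steps are correct.

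Your observation about the degenerate case is also correct and worth keeping. As printed, the ``moreover'' clause fails for $g\equiv 0$: then $g(\mathcal{B})=\{0\}\subset\mathbb{Q}$ and $\varphi\equiv 0$ is continuous, so the hypothesis $g\not\equiv 0$ must be added. This does not damage the paper, because the only place where part~\eqref{L2.10-2} is used, Example~\ref{Exam2.5}, has $g(\mathbf{e})=1$. Both of your arguments for the nondegenerate case are valid: connectedness forces a continuous $\mathbb{Q}$-valued map on $\mathbb{R}^n$ to be constant, and continuous additive maps are $\mathbb{R}$-linear with image $\{0\}$ or $\mathbb{R}$.

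One small addition would make the lemma deliver exactly what Example~\ref{Exam2.5} needs. In its Claim~1, only continuity of $\varphi$ at $0_{\mathbb{R}^n}$ is derived and declared a contradiction. Your argument as written only shows that $\varphi$ is not continuous on $\mathbb{R}^n$, so you should add that an additive map continuous at one point is continuous everywhere. This follows from $\varphi(x+h)-\varphi(x)=\varphi(h)$, and it upgrades your conclusion to ``$\varphi$ is discontinuous at every point, in particular at $0$.''
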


\section{A von Neumann-Jordan constant of non-normable metrics}\label{S3}
In this section, we assume that $d$ is a metric on a vector space $X$, the function $f$ is given by \eqref{f}, and $\sigma\in[1,\infty)$.
Define
 $\mathcal{G}_{d}^{(\sigma)}:X\times X\to(0,\infty)$  by
\begin{gather}\label{G}
\mathcal{G}^{(\sigma)}_{d}(x,y):=\dfrac{f^\sigma(x+y)+f^\sigma(x-y)}{2^{\sigma-1}(f^\sigma(x)+f^\sigma(y))}
\end{gather}	
for all $x,y\in X$ with $(x,y)\ne (0_X,0_X)$.
The von Neumann-Jordan constant of order $\sigma$ associated with the metric $d$ is defined by
\begin{gather}\label{C1}
C_{\text{NJ}}^{(\sigma)}(d):=\sup_{(x,y)\ne (0_X,0_X)}\mathcal{G}_{d}^{(\sigma)}(x,y).
\end{gather}	

\begin{remark}\label{R2.1}
\begin{enumerate}[\rm (i)]
\item
The constant \eqref{C1} was originally studied in \cite{CuiHuaHudKac15} when $f$ is a norm, and
it coincides with the conventional von Neumann-Jordan constant when $\sigma = 2$.
It is worth noting that in our setting $d$ is generally a non-translation-invariant and non-normable metric.
\item
When $f$ is a norm, the above constant has been employed to study fixed point results, uniform nonsquareness, and normal structure in Banach spaces \cite{CuiHuaHudKac15, WanCuiZha15}. 
Explicit computations of this constant for specific Banach spaces can be found in \cite{RahGun21, AmiKho24}.
\end{enumerate}
\end{remark}

The following statement gives lower and upper bounds for the  constant.
\begin{theorem}\label{T2.5}
Let $\sigma\in[1,\infty)$.
The following assertions hold:
\begin{enumerate}[\rm (i)]
\item\label{T2.5-1}
$C_{\text{\rm NJ}}^{(\sigma)}(d)\ge 2^{2-\sigma}$;
\item\label{T2.5-2}
if $f$ is 2-homogeneous, then $C_{\text{\rm NJ}}^{(\sigma)}(d)\ge 1$;
\item\label{T2.5-3}
if $f$ is subadditive and even, then
$C_{\rm{NJ}}^{(\sigma)}(d)\le 2$.
\end{enumerate}		
\end{theorem}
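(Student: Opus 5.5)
For \eqref{T2.5-1}, I will evaluate $\mathcal{G}^{(\sigma)}_d$ at a single well-chosen pair, namely $(x,y)=(x,0_X)$ with $x\ne 0_X$. Since $f(0_X)=d(0_X,0_X)=0$ and $f(x)>0$ for $x\neq 0_X$, we get
\begin{gather*}
\mathcal{G}^{(\sigma)}_d(x,0_X)=\frac{f^\sigma(x)+f^\sigma(x)}{2^{\sigma-1}f^\sigma(x)}=2^{2-\sigma}.
\end{gather*}
Hence $C_{\rm NJ}^{(\sigma)}(d)\ge 2^{2-\sigma}$. This requires $X\ne\{0_X\}$, which is implicit in the definition \eqref{C1}. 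The pair $(x,0_X)$ is used here because no homogeneity of $f$ is available in this part.

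For \eqref{T2.5-2}, I will instead take $(x,y)=(x,x)$ with $x\ne0_X$. This gives
\begin{gather*}
\mathcal{G}^{(\sigma)}_d(x,x)=\frac{f^\sigma(2x)+f^\sigma(0_X)}{2^{\sigma-1}\cdot 2f^\sigma(x)}=\frac{f^\sigma(2x)}{2^\sigma f^\sigma(x)}.
\end{gather*}
By 2-homogeneity, $f(2x)=2f(x)$, so the quotient equals $1$ and therefore $C_{\rm NJ}^{(\sigma)}(d)\ge 1$.

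For \eqref{T2.5-3}, I will bound each quotient by $2$ directly. Subadditivity and evenness give
\[
f(x+y)\le f(x)+f(y),\qquad f(x-y)\le f(x)+f(-y)=f(x)+f(y).
\]
Since $t\mapsto t^\sigma$ is increasing, both $f^\sigma(x+y)$ and $f^\sigma(x-y)$ are at most $(f(x)+f(y))^\sigma$. Lemma~\ref{L1.5}\eqref{L1.5-2} with $n=2$ then gives $(f(x)+f(y))^\sigma\le 2^{\sigma-1}(f^\sigma(x)+f^\sigma(y))$. Adding the two bounds and dividing yields $\mathcal{G}^{(\sigma)}_d(x,y)\le 2$ for every $(x,y)\ne(0_X,0_X)$. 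Taking the supremum completes the proof.

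I do not expect a real obstacle. The only point needing care is to use evenness to handle $x-y$ in part \eqref{T2.5-3}, since translation-invariance is not assumed.
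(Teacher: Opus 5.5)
Your proof is correct and follows essentially the same route as the paper: you use the test pairs $(x,0_X)$ and $(x,x)$ for parts (i) and (ii), and for part (iii) you combine subadditivity and evenness with the second inequality of Lemma~\ref{L1.5}\eqref{L1.5-2} for $n=2$. Your explicit use of evenness to bound $f(x-y)$, and your remark that $X\ne\{0_X\}$ is implicitly needed, spell out points the paper leaves tacit.
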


\begin{proof}
Assertion \eqref{T2.5-1} is a consequence of the fact that $\mathcal{G}^{(\sigma)}_{d}(x,0_X)=2^{2-\sigma}$ for all $x\in X$ with $x\ne 0_X$.
If $f$ is 2-homogeneous, then
\begin{gather*}
\mathcal{G}^{(\sigma)}_{d}(x,x)
=\dfrac{f^\sigma(2x)}{2^{\sigma}f^\sigma(x)}= \dfrac{2^{\sigma}f^\sigma(x)}{2^{\sigma}f^\sigma(x)}=1
\;\;\text{for any}\;\;x\ne 0_X.
\label{P4.5-5}
\end{gather*}	
Thus, assertion \eqref{T2.5-2} is satisfied.
Suppose that $f$ is subadditive and even.
Let $x,y\in X$.
Then $f(x\pm y)\le f(x)+f(y)$.
By the second inequality in Lemma~\ref{L1.5}\eqref{L1.5-2} with $\al:=\sigma$ and $n=2$,
\begin{gather*}
	f^\sigma(x+y)+f^\sigma(x-y)\le 
	2(f(x)+f(y))^\sigma\le 2^\sigma(f^\sigma(x)+f^\sigma(y)).
\end{gather*}	
By \eqref{G} and \eqref{C1}, $C_{\text{NJ}}^{(\sigma)}(d)\le 2$.
This completes the proof.
\end{proof}	

\begin{remark}
\begin{enumerate}[\rm (i)]
\item		
When $f$ is a norm (in which case all the above assumptions are satisfied), Theorem~\ref{T2.5} recaptures \cite[Theorem~3.1]{CuiHuaHudKac15}.
%We emphasize that the above assumptions are essential for ensuring these lower and upper bounds. Several examples and counterexamples illustrating this necessity are provided below.
\item 
In view of assertion~\eqref{T2.5-1}, the statement in assertion~\eqref{T2.5-2} is meaningful only when $\sigma \in [2,\infty)$.
\end{enumerate}
\end{remark}	

The inequality in Theorem~\ref{T2.5}\eqref{T2.5-1} is not strict.
\begin{example}\label{Exam2.3}
Let $X$ be a Hilbert space, and
$d(x,y):=\min\{\|x-y\|,1\}$ for all $x,y\in X$.
It is clear that $d$ is a metric, and
$f(x):=d(x,0_X)=\min\{\|x\|,1\}$ for all $x\in X$. 
Observe that $f$ is not a norm since 
%\todo{$f(2\mathbf{e})=1\ne 2=2f(\mathbf{e})$}
$f(2\mathbf{e})=1\ne 2=2f(\mathbf{e})$, where $\mathbf{e}\in X$ is a unit vector.
Let $\sigma\in [1,2]$.
We are going to show that $C_{\text{NJ}}^{(\sigma)}(d)
\le 2^{2-\sigma}$. 
Let $x,y\in X$ with $(x,y)\ne (0_X,0_X)$.
Since $X$ is a Hilbert space, we have
\begin{gather*}
\|x+y\|^2+\|x-y\|^2=2(\|x\|^2+\|y\|^2).
\end{gather*}	
From this and the second inequality in Lemma~\ref{L1.5}\eqref{L1.5-1} with $\al:=\frac{\sigma}{2}\in[1/2,1]$ and 
$n=2$, 
\begin{align*}
\|x+y\|^\sigma+\|x-y\|^\sigma&=(\|x+y\|^2)^{\frac{\sigma}{2}}+(\|x-y\|^2)^{\frac{\sigma}{2}}\\
&\le 2^{1-\frac{\sigma}{2}}(\|x+y\|^2+\|x-y\|^2)^{\frac{\sigma}{2}}
=2\left(\|x\|^2+\|y\|^2\right)^{\frac{\sigma}{2}}
\le 2(\|x\|^\sigma+\|y\|^\sigma).
\end{align*}
Note that $f$ takes values on $[0,1]$.
If $\|x\|<1$ and $\|y\|<1$, then
\begin{eqnarray*}
f^\sigma(x+y)+f^\sigma(x-y)\le \|x+y\|^\sigma+\|x-y\|^\sigma
\le2(\|x\|^\sigma+\|y\|^\sigma)
=2(f^\sigma(x)+f^\sigma(y)),
\end{eqnarray*}	
and consequently, $\mathcal{G}^{(\sigma)}_{d}(x,y)\le 2^{2-\sigma}.$
If either $\|x\|\ge 1$ or $\|y\|\ge 1$, then 
\begin{gather*}
\mathcal{G}^{(\sigma)}_{d}(x,y)\overset{\eqref{G}}{=}\dfrac{f^\sigma(x+y)+f^\sigma(x-y)}{2^{\sigma-1}(f^\sigma(x)+f^\sigma(y))} \le \dfrac{f^\sigma(x+y)+f^\sigma(x-y)}{2^{\sigma-1}}  \le \frac{2}{2^{\sigma-1}}=2^{2-\sigma}.
\end{gather*}	
By Theorem~\ref{T2.5}\eqref{T2.5-1},  $C_{\text{NJ}}^{(\sigma)}(d)=2^{2-\sigma}$.
It is straightforward to verify that  convergence with respect to  $d$ is equivalent to convergence in $\|\cdot\|$.
Therefore, the addition and scalar multiplication operations are continuous.
\end{example}

The next example shows that there exists a non-normable metric satisfying all conditions in parts \eqref{T2.5-2} and \eqref{T2.5-3} of Theorem~\ref{T2.5}.
\begin{example}\label{Exam2.5}
Let $(\R^n,\R)$ be a vector space equipped with a norm $\|\cdot\|$, $\mathbf{e}\in\R^n$ be a unit vector,  and $S:=\{\mathbf{e},\sqrt{2}\mathbf{e}\}$.
Consider the restriction $(\R^n,\mathbb{Q})$ of $(\R^n,\R)$.
Observe that $S$ is a linearly independent set in $(\R^n,\mathbb{Q})$. 
By Lemma~\ref{L2.10}\eqref{L2.10-1}, there exists a  basis $\mathcal{B}\supset S$ of $(\R^n,\mathbb{Q})$.
Define $g:\mathcal{B}\to\R$  by
\begin{gather*}
g(x):=\begin{cases}
0  & \text{\rm if } x\in \mathcal{B}\setminus\{\mathbf{e}\},\\
1 & \text{\rm if } x= \mathbf{e}.
\end{cases} 
\end{gather*}
By Lemma~\ref{L2.10}\eqref{L2.10-2}, there exists a unique additive function $\varphi:\R^n\to \R$  such that $\varphi(x)=g(x)$ for all $x\in\mathcal{B}$.
By Remark~\ref{R1.2}\eqref{R1.2-3}, $\varphi$ is $\lambda$-homogeneous for any $\lambda\in\mathbb{Q}$.
In particular, $\varphi(0)=0$ and $\varphi(-x)=-\varphi(x)$ for all $x\in \R^n$.
Define 
\begin{gather}\label{E3.5-1}
d(x,y):=\|x-y\|+\big|\varphi(x-y)\big|\;\;\text{for all}\;\;x,y\in\R^n.
\end{gather}	
Observe that $d$ is symmetric, nonnegative and $d(x,y)=0$ if and only if $x=y$.
The triangle inequality of $d$ is a consequence of that of the norm and the additivity of $\varphi$.
Thus, $d$ is a metric on $\R^n$.
The function
%\todo{$f(x):=d(x,0_{\R^n})$}
 $f(x):=d(x,0_{\R^n})=\|x\|+|\varphi(x)|$ for all $x\in\R^n$
is subadditive, even, and $\lambda$-homogeneous for all nonnegative $\lambda\in\mathbb{Q}$. 
However, it is not a norm on $(\R^n,\R)$.
Indeed, $f(\sqrt 2\mathbf{e})
=\|{\sqrt 2}{\mathbf{e}}\|+|\varphi(\sqrt{2}\mathbf{e})|=\sqrt{2}+0=\sqrt{2}$
and $f(\mathbf{e})=\|\mathbf{e}\|+|\varphi(\mathbf{e})|=1+1=2$, and consequently, $f(\sqrt{2}\mathbf{e}) \ne \sqrt 2 f(\mathbf{e})$.\\
\textbf{Claim 1.} $\sup_{0<\|x\|\le 1}|\varphi(x)|=\infty$. 
Observe that $g(\mathcal{B})\subset\mathbb{Q}$.
By Lemma~\ref{L2.10}\eqref{L2.10-2}, $\varphi$ is discontinuous on $\R^n$. 
Suppose that $|\varphi(x)|\le M$ for all $x\in\R^n$ with $0<\|x\|\le 1$ and some $M>0$. 
Let $x\ne 0$.
Since $\mathbb{Q}$ is dense in $\R$, there exists a $q\in\mathbb{Q}$ such that $\frac{1}{2\|x\|}<q<\frac{1}{\|x\|}$.
Then $|\varphi(x)|=\frac{1}{q}|\varphi(qx)|\le \frac{M}{q}<2M\|x\|\to 0$ as $x\to 0$, which contradicts the discontinuity of $\varphi$.\\
\textbf{Claim 2.}
$\varphi(x)\in\mathbb{Q}$ for all $x\in\mathbb{R}^n$.
Indeed, for any $x\in\R^n$, there exist an integer $m\in\N$, scalars
$q_1,\ldots,q_m\in \mathbb{Q}$ and vectors $\mathbf{b}_1,\ldots,\mathbf{b}_m\in\mathcal{B}$ such that $x=\sum_{i=1}^{m}q_i\mathbf{b}_i$. 
By the additivity,
\begin{gather*}		\varphi(x)=\varphi\left(\sum_{i=1}^{m}q_i\mathbf{b}_i\right)=\sum_{i=1}^{m}q_i\varphi(\mathbf{b}_i)=\sum_{i=1}^{m}q_ig(\mathbf{b}_i)\in\mathbb{Q}.
\end{gather*}
\textbf{Claim 3.}  $C_{\text{NJ}}^{(\sigma)}(d)=2$ for any   $\sigma\in [1,\infty)$.
By Claims 1 and 2, there exists a sequence $\{x_k\}\subset\R^n$ such that $0<\|x_k\|\leq 1$ for all $k\in\N$ and the rational sequence $M_k:=|\varphi(x_k)|\to\infty$ as $k\to\infty$.
Let $y_k:=-x_k+\varphi(x_k)\cdot\mathbf{e}$ for all $k\in\N$.
Then
\begin{gather*}
\varphi(y_k)=0,\;\varphi(x_k+y_k)=\varphi(x_k),\;\varphi(x_k-y_k)=\varphi(2x_k-\varphi(x_k)\cdot\mathbf{e})=\varphi (x_k),
\end{gather*}	
and consequently,
\begin{gather*}
f(x_k)=\|x_k\|+|\varphi(x_k)|\leq 1+M_k,\;
f(y_k)
=\|-x_k+\varphi(x_k)\cdot\mathbf{e}\|\le 1+M_k,\\
f(x_k+y_k)=2M_k,\;
f(x_k-y_k)=\|2x_k-\varphi(x_k)\cdot\mathbf{e}\|+|\varphi(x_k)|\ge 2M_k-2
\end{gather*}
for all $k\in\mathbb{N}$.
By \eqref{G} and \eqref{C1},
\begin{align*}
C_{\text{NJ}}^{(\sigma)}(d)\ge 
\frac{f^\sigma(x_k+y_k)+f^\sigma(x_k-y_k)}{2^{\sigma-1}\left(f^\sigma(x_k)+f^\sigma(y_k)\right)}
\ge\frac{(2M_k)^\sigma+(2M_k-2)^\sigma}{2^{\sigma-1}\left((1+M_k)^\sigma+(1+M_k)^\sigma\right)}:=\gamma_k
\end{align*}
for all $k\in\N$.
We have
\begin{align*}
\lim_{k\to\infty}\gamma_k= \lim_{k\to\infty}\frac{1+\left(1-\frac{1}{M_k}\right)^\sigma}{\left(1+\frac{1}{M_k}\right)^\sigma}=2.
\end{align*}
Thus, $C_{\text{NJ}}^{(\sigma)}(d)\ge 2$.
By Theorem~\ref{T2.5}\eqref{T2.5-3}, we have $C_{\text{NJ}}^{(\sigma)}(d)=2$.
We now show that only the addition operation is continuous.
Let $x,y\in \R^n$ and $\{x_k\}, \{y_k\}\subset \R^n$ such that 
$d(x_k,x)\to 0$ and $d(y_k,y)\to 0$ as $k\to\infty$.
By the additivity of $\varphi$,  
\begin{align*}
d(x_k+y_k,x+y)&
\overset{\eqref{E3.5-1}}{=}\|x_k-x+y_k-y\|+\big|\varphi(x_k-x+y_k-y)\big|\\
&\leq \|x_k-x\|+\|y_k-y\|+\big|\varphi(x_k-x)+\varphi(y_k-y)\big|\\
&\leq \|x_k-x\|+\big|\varphi(x_k-x)\big|+\|y_k-y\|+\big|\varphi(y_k-y)\big|\\
&\overset{\eqref{E3.5-1}}{=}d(x_k,x)+d(y_k,y)\to 0 \;\;\text{as}\;\; k\to\infty.
\end{align*}
On the other hand, let $z_k:=\mathbf{e}$ $(k\in\N)$ and
$\{\lambda_k\}\subset\mathbb{Q}$ with $\lambda_k\to\sqrt{2}$ as $k\to\infty$. 
By the additivity and homogeneity of $\varphi$,  
\begin{align*}
d(\lambda_k z_k,\sqrt{2}\mathbf{e})
&\overset{\eqref{E3.5-1}}{=}\|(\lambda_k-\sqrt{2})\mathbf{e}\|+\big|\varphi(\lambda_k\mathbf{e}-\sqrt{2}\mathbf{e})\big|\\
&=\|(\lambda_k-\sqrt{2})\mathbf{e}\|+\big|\varphi(\lambda_k\mathbf{e})-\varphi(\sqrt{2}\mathbf{e})\big|\\
&=|\lambda_k-\sqrt{2}|+\big|\lambda_k\varphi(\mathbf{e})\big|
=|\lambda_k-\sqrt{2}|+\big|\lambda_k\big|\to\sqrt{2}\;\;\text{as}\;\; k\to\infty.
\end{align*}
Thus, the scalar multiplication operation is not continuous.
\end{example}

The $2$-homogeneity condition in Theorem~\ref{T2.5}\eqref{T2.5-2} is essential.
\begin{example}\label{E3.3}
Let $(X,\|\cdot\|)$ be a normed space, and
$d(x,y):=\|x-y\|^{\frac{1}{5}}$ for all $x,y\in X$.
Observe that $d$ is a metric on $X$.
Indeed, it suffices to verify the triangle inequality since the other conditions are obviously satisfied.
In view of the first inequality in  Lemma~\ref{L1.5}\eqref{L1.5-1} with $\al:=\frac{1}{5}$ and $n=2$, 
\begin{gather*}
\|x-y\|^{\frac{1}{5}}\le(\|x-z\|+\|y-z\|)^{\frac{1}{5}}
\le \|x-z\|^{\frac{1}{5}}+\|y-z\|^{\frac{1}{5}}
\;\;\text{for all}\;\;x,y,z\in X.
\end{gather*}
We have 
%\todo{$f(x):=d(x,0_{X})$}
$f(x):=d(x,0_X)=\|x\|^{\frac{1}{5}}$ for all $x\in X.$
Let $\sigma:=4$.
It is clear that $f$ is not $2$-homogeneous.
Let $x,y\in X$.
Then $\|x\pm y\|\le 2(\|x\|+\|y\|)$.
By the first inequality in Lemma~\ref{L1.5}\eqref{L1.5-1} with $\al:=\frac{4}{5}$ and $n=2$,
\begin{gather*}
\|x+y\|^{\frac{4}{5}}+\|x-y\|^{\frac{4}{5}}
\le 2(\|x\|+\|y\|)^{\frac{4}{5}}
\le 2(\|x\|^{\frac{4}{5}}+\|y\|^{\frac{4}{5}}).
\end{gather*}
By \eqref{G} and \eqref{C1},
\begin{gather*}
C_{\text{NJ}}^{(4)}(d)
=\sup_{(x,y)\ne (0_X,0_X)}\dfrac{f^4(x+y)+f^4(x-y)}{2^{3}(f^4(x)+f^4(y))}\le\frac{1}{4}.
\end{gather*}	
By Theorem~\ref{T2.5}\eqref{T2.5-1}, we have
$C_{\text{NJ}}^{(4)}(d)=\frac{1}{4}$.
Similarly to Example~\ref{Exam2.3}, it is straightforward to verify that the addition and scalar multiplication operations are continuous.
\end{example}

The next two examples show that the  assumption in Theorem~\ref{T2.5}\eqref{T2.5-3} is essential.

\begin{example}[the subadditivity is essential]\label{Exam2.2}
Let $X$ be a normed space, and
$d(x,y):=\|x-y\|+\left|\|x\|^2-\|y\|^2\right|$ for all $x,y\in X$.
We have $d$ is a metric on $X$, and
$f(x):=d(x,0_X)=\|x\|+\|x\|^2$ for all $x\in X$.
It is clear that $f$ is even. 
However,  $f$ is not subadditive since $f(2\mathbf{e})=6$ and $f(\mathbf{e})=2$, where $\mathbf{e}\in X$ is a unit vector.
Let $\sigma\in [2,\infty)$.
By \eqref{G} and \eqref{C1},
\begin{gather*}
C_{\text{NJ}}^{(\sigma)}(d)
\ge\dfrac{f^\sigma(2\mathbf{e})+f^\sigma(0_X)}{2^{\sigma-1}(f^\sigma(\mathbf{e})+f^\sigma(\mathbf{e}))}=(3/2)^\sigma>2.
\end{gather*}
Similarly to Example~\ref{Exam2.3}, it is straightforward to verify that the addition and scalar multiplication operations are continuous.
\end{example}

\begin{example}[the evenness is essential]\label{Exam2.4}
Let $X$ be a Hilbert space, $\mathbf{e}\in X$ be a unit vector, and
\begin{gather*}
h(x):=\begin{cases}
\|x\|+\langle x,\mathbf{e}\rangle &\text{if } \langle x,\mathbf{e}\rangle \ge 0,\\
\|x\|-4\langle x,\mathbf{e}\rangle &\text{if } \langle x, \mathbf{e}\rangle<0
\end{cases}
\end{gather*}
for all $x\in X$.
Define 
\begin{gather*}
d(x,y):=\begin{cases}
h(x)+h(y) &\text{if } x\ne y,\\
0 &\text{if } x=y
\end{cases}
\end{gather*}
for all $x,y\in X$.
Observe that $d$ is a metric on $X$, and
$f(x):=d(x,0_X)=h(x)$ for all $x\in X$.
It is clear that $f$ is subadditive.
However, it is not even since $f(-\mathbf{e})=5\ne 2=f(\mathbf{e})$.
Let $\sigma\in[1,\infty)$.
By \eqref{G} and \eqref{C1},
\begin{align*}
C_{\text{NJ}}^{(\sigma)}(d)\ge
\dfrac{f^\sigma((1+k)\cdot \mathbf{e})+f^\sigma((1-k)\cdot \mathbf{e})}{2^{\sigma-1}(f^\sigma(\mathbf{e})+f^\sigma(k\cdot \mathbf{e}))}
=\frac{(2k+2)^\sigma+(5k-5)^\sigma}{2^{\sigma-1}(2^\sigma+(2k)^\sigma)}:=\gamma_k
\end{align*}	
for all $k\in\N$. 
We have $\lim_{k\to\infty}\gamma_k=\frac{2^\sigma+5^\sigma}{2^{2\sigma-1}}>2.$
Thus, $C_{\text{NJ}}^{(\sigma)}(d)> 2$.
Let $x_k:=\mathbf{e}$ and $y_k:=\frac{1}{k}\mathbf{e}$ $(k\in\N)$. 
Then $d(y_k,0_X)\to 0$ as $k\to\infty$, and
\begin{align*}
d(x_k+y_k,\mathbf{e}+0_X)=d\left(\left(1+\frac{1}{k}\right)\mathbf{e},\mathbf{e}\right)
=h\left(\left(1+\frac{1}{k}\right)\mathbf{e}\right)+h(\mathbf{e})
=2\left(1+\frac{1}{k}\right)+2\to 4
\end{align*}
as $k\to\infty$.
Moreover, let $z_k:=\mathbf{e}$ and $\lambda_k:=1+\frac{1}{k}$ $(k\in\N)$. 
Then  $\lambda_k\to 1$ as $k\to\infty$, and
\begin{align*}
d(\lambda_k z_k,1\cdot\mathbf{e})=d\left(\left(1+\frac{1}{k}\right)\mathbf{e},\mathbf{e}\right)\to 4\;\;\text{as}\;\; k\to\infty.
\end{align*}
Thus, the addition and scalar multiplication operations are not continuous.
\end{example}

\if{
\DHH{22/3/26. 
Examples of non-normable metrics that satisfy only one of the two conditions in Proposition~\ref{T2.5}\eqref{T2.5-3} yield $C_{\rm{NJ}}^{(\sigma)}(d)>2$; see Examples \ref{Exam2.2} and \ref{Exam2.4}. Example~\ref{Exam2.4} satisfies \eqref{T2.5-1} but not \eqref{T2.5-3}. Example~\ref{Exam2.3} satisfies \eqref{T2.5-3} but not \eqref{T2.5-1}. Example~\ref{Exam2.5} satisfies both \eqref{T2.5-1} and \eqref{T2.5-3}.
}
}\fi

\if{
The following staement shows that condition \eqref{T2.5-3}  holds for a broader
class of distances.
\begin{proposition}\label{P2.5}
If $f$ is convex, $2$-homogeneous and even (i.e., $f(-x)=f(x)$ for all $x\in X$), then condition \eqref{T2.5-4} is satisfied.
\end{proposition}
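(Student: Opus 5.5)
The plan is to verify the hypothesis itself rather than the bound it leads to. I read condition \eqref{T2.5-4} as the structural requirement placed on $f$ in the upper-bound part of Theorem~\ref{T2.5}, namely that $f$ is subadditive and even. The proposition then says that this requirement is met by the broader class of metrics whose associated $f$ is convex, $2$-homogeneous and even, with no translation-invariance of $d$ assumed. So the whole task is to check that $f$ is subadditive and even; the estimate $C^{(\sigma)}_{\rm NJ}(d)\le 2$ is a downstream consequence and is not part of what needs proving.

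Evenness is part of the assumptions, so only subadditivity needs an argument. It is the observation already recorded in Remark~\ref{R1.2}\eqref{R1.2-1}, and I would prove it directly in the same way as the triangle inequality in Proposition~\ref{P1.5}:
\begin{gather*}
f(x+y)=f\left(2\cdot\frac{x+y}{2}\right)=2f\left(\frac{x+y}{2}\right)\le 2\cdot\frac{f(x)+f(y)}{2}=f(x)+f(y)
\end{gather*}
for all $x,y\in X$. The second equality uses $2$-homogeneity. The inequality uses convexity with $\lambda=\tfrac12$, so midpoint convexity (Remark~\ref{R1.2}\eqref{R1.2-1}) is enough. Together with evenness, this shows that $f$ is subadditive and even, which is exactly condition \eqref{T2.5-4}.

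I do not expect a genuine obstacle. The one point to watch is that, without translation-invariance, Proposition~\ref{P2.3}\eqref{P2.3-2} cannot be used, so both subadditivity and evenness have to come from the stated hypotheses on $f$. The computation above shows that they do. As a remark I would add that $f$ need not be a norm under these hypotheses, because absolute homogeneity for all real scalars is not assumed; this is what makes the class genuinely broader. Since $f$ is also $2$-homogeneous, Theorem~\ref{T2.5}\eqref{T2.5-2} applies as well, so for such metrics $1\le C^{(\sigma)}_{\rm NJ}(d)\le 2$.
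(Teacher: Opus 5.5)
Your proof is correct and is essentially the paper's. The paper writes $f(x\pm y)=f\bigl(\frac{2x\pm 2y}{2}\bigr)\le\frac{f(2x)+f(\pm 2y)}{2}=f(x)+f(y)$, applying midpoint convexity to $2x$ and $\pm 2y$ before using evenness and $f(2z)=2f(z)$, whereas you pull the factor $2$ out first; the condition it checks, $f(x\pm y)\le f(x)+f(y)$ for all $x,y$, is equivalent to your reading ``subadditive and even'' because $f(0_X)=0$.

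Your closing aside, however, is false. Convexity and $f(0_X)=0$ give $f(\lambda x)\le\lambda f(x)$ for $\lambda\in[0,1]$. Writing $\mu x=\frac{\mu}{2^k}\cdot 2^k x$ with $2^k\ge\mu$ then gives $f(\mu x)\le\mu f(x)$ for every $\mu>0$, and applying this to $\mu^{-1}$ and $\mu x$ turns it into an equality. Thus $f$ is positively homogeneous, hence absolutely homogeneous by evenness, hence a norm by Proposition~\ref{P1.5}. So these hypotheses force $d$ to be normable. It is the assumption ``subadditive and even'' of Theorem~\ref{T2.5}\eqref{T2.5-3}, without convexity, that reaches non-normable metrics such as the one in Example~\ref{Exam2.5}.
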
	
\begin{proof}
By the assumptions,
\begin{gather*}
f(x+y)=f\left(\frac{2x+2y}{2}\right)\leq \frac{f(2x)+f(2y)}{2}
=f(x)+f(y),\\
f(x-y)=f\left(\frac{2x-2y}{2}\right)\leq \frac{f(2x)+f(-2y)}{2}
=\frac{f(2x)+f(2y)}{2}
=f(x)+f(y)
\end{gather*}
for all $x,y\in X$.
Thus, condition \eqref{T2.5-4} is satisfied.
\end{proof}	
}\fi

\if{
\DHH{9/3/26.
An example of a function $f$ does not satisfy condition \eqref{T2.5-4} yields $C_{\rm{NJ}}^{(\sigma)}(d)>2$; see Example~\ref{Exam2.2}. An example of a function $f$ satisfying \eqref{T2.5-4} yields $C_{\rm{NJ}}^{(\sigma)}(d)\le2$; see Examples~\ref{Exam2.3} and \ref{Exam2.5}.
}
}\fi
\if{
\begin{remark}
\begin{enumerate}[\rm (i)]
\item	
Each of the Examples \ref{E3.3}-\ref{Exam2.5} provides the corresponding non-normable metrics associated with the properties in Proposition~\ref{T2.5}.
The following table summarizes these properties in each example.
\begin{center}
\begin{tabular}{clccc}
\toprule
& 
& 2-homogeneous
& subadditive
& even
\\
\midrule
& Example \ref{E3.3}
			& $\times$ & $\times$ & \checkmark \\
			& Example \ref{Exam2.2}
			& $\times$ & $\times$ & \checkmark \\
			& Example \ref{Exam2.4}
			& \checkmark & \checkmark & $\times$  \\
			& Example \ref{Exam2.3}
			& $\times$ & \checkmark & \checkmark \\
			&  Example \ref{Exam2.5}
			& \checkmark & \checkmark & \checkmark  \\
			\bottomrule
		\end{tabular}
	\end{center}
\item
In  Examples~\ref{E3.3} and \ref{Exam2.2}, the $2$-homogeneity condition is not satisfied. 
However, in the first example the constant is less than $1$, while in the second example it is greater than $1$. 
On the other hand, Examples~\ref{Exam2.4} and \ref{Exam2.5} can be viewed as illustrations of Proposition~\ref{T2.5}\eqref{T2.5-1}: the $2$-homogeneity condition is satisfied and the constant is greater than $1$. 
These observations show that the $2$-homogeneity property is far from being necessary for the constant to be greater than $1$.
\end{enumerate}
\end{remark}
}\fi

\if{
\begin{remark}
\begin{enumerate}[\rm (i)]
\item\label{R2.3-1}
Condition \eqref{A1} is equivalent to $f(2^kx)=2^kf(x)$ for all $x\in X$ and $k\in\Z$.
Indeed, if $k$ is non-negative, the statement is obvious.
If $k$ is negative, then
$$f(2^kx)=2^k\cdot2^{-k}f(2^kx)=2^kf(2^{-k}2^kx)=2^kf(x).$$
\item
The condition $\sigma\in[1,2]$ in part \eqref{P2.2-2} of Proposition~\ref{P2.2} is to ensure that $2^{\frac{2-\sigma}{2}}\ge 1$.
\item\label{R2.3-2}
We have $\mathcal{G}^{(\sigma)}_{d}(x,0_X)=2^{2-\sigma}$ for all $x\in X$ with $x\ne 0_X$.
By \eqref{C1}, $C_{\text{NJ}}^{(\sigma)}(d)\ge 2^{2-\sigma}$.
\end{enumerate}
\end{remark}
}\fi

\if{
The following  examples show that the assumptions in Theorem~\ref{T2.5}\eqref{T2.5-2} and \eqref{T2.5-3} are essential and cannot be dropped.
\begin{example}[the $2$-homogeneity is essential]
Let $\R^2$ be equipped with the Euclidean norm $\|\cdot\|$, and $\varphi:\R\to\R^2$ be given by $\varphi(x):=(x,x^2)$ for all $x\in\R$.
Consider the metric $d(x,x'):=\|\varphi(x)-\varphi(x')\|$ for all $x,x'\in\R$. 
We have $f(x):=d(x,0)=|x|\sqrt{1+x^2}$ for all $x\in\R$.
Observe that $f$ is convex and even.
For $x:=1$, we have $f(2x)=2\sqrt 5$ and $f(x)=\sqrt{2}$.
Thus, $f$ is not satisfied $2$-homogeneous. 
 Let $\sigma\in(1,\infty)$. 
Then
\begin{align*}
C_{\text{NJ}}^{(\sigma)}(d)
&\overset{\eqref{C1}}{\ge}\dfrac{f^\sigma(2n)+f^\sigma(0)}{2^{\sigma-1}(f^\sigma(n)+f^\sigma(n))}=\left(\frac{1+4n^2}{1+n^2}\right)^{\sigma/2}\to 2^\sigma\;\;\text{as}\;\;n\to\infty.
\end{align*}	
Thus, $C_{\text{NJ}}^{(\sigma)}(d)\geq 2^\sigma>2$.
\end{example}
}\fi
\if{
\begin{example}[the evenness is essential]
Let $g:\R\to\R$ be given by $g(x):=x$ if $x\ge 0$, and 
$g(x):=-4x$ if $x<0$.
Consider the metric 
\begin{gather*}
d(x,x'):=\begin{cases}
g(x)+g(x') &\text{if } x\ne x',\\
0 &\text{if } x=x'.
\end{cases}
\end{gather*}
We have $f(x):=d(x,0)=g(x)$ for all $x\in\R$.
Observe that $f$ is convex and $2$-homogeneous.
However, $f$ is not even since $f(1)=1\ne 4=f(-1)$.
Let $\sigma\in[1,\infty)$.
Then
\begin{gather*}
C_{\text{NJ}}^{(\sigma)}(d)
\overset{\eqref{C1}}{\ge}
\dfrac{f^\sigma(3)+f^\sigma(-1)}{2^{\sigma-1}(f^\sigma(1)+f^\sigma(2))}
=2\cdot\dfrac{3^\sigma+4^\sigma}{2^\sigma+4^\sigma}>2.
\end{gather*}	
\end{example}
}\fi

\if{
The follwing example shows that there exists a non-normable metric satisfies all the condition in Proposition~\ref{P2.5}.
\begin{example}
Let $\R^n$ be a vector space over the field of rational numbers $\mathbb{Q}$, $\mathbf{e}$ be a unit vector in $\R^n$ and $S:=\{\mathbf{e}, \sqrt{2}\mathbf{e}\}$.
Note that $S$ is linearly independent. 
By \cite[Theorem 4.2.1]{Mar09}, there exists a Hamel basis $\mathcal{H}$ of $\R^n$ such that $\mathcal{H}\supset S$. 
Define $g:\mathcal{H}\to\R$  by
\begin{gather*}
g(x):=\begin{cases}
0  & \text{\rm if } x\in\mathcal{H}\setminus\{\mathbf{e}\},\\
1 & \text{\rm if } x= \mathbf{e}.
\end{cases} 
\end{gather*}
By \cite[Theorem 5.2.2]{Mar09}, there exists a unique additive function $h:\R^n\to \R$ such that $h(x)=g(x)$ for all $x\in\mathcal{H}$.
Consider the metric $d(x,x'):=\|x-x'\|+\big|h(x-x')\big|$ for all $x,x'\in\R^n$, where $\|\cdot\|$ be a norm on $\R^n$.
Then $f(x):=d(x,0)=\|x\|+|h(x)|$ for all $x\in\R^n$.
Observe that $f$ is convex, even, $2$-homogeneous.
We have
\begin{align*}
f(\sqrt{2}\mathbf{e})
&=\|\sqrt{2}\mathbf{e}\|+|h(\sqrt{2}\mathbf{e})|=\sqrt{2}+0=\sqrt{2},\\
\sqrt 2\cdot f(\mathbf{e})
&=\sqrt{2}\left(\|\mathbf{e}\|+|h(\mathbf{e})|\right)=\sqrt{2}(1+1)=2\sqrt{2}.
\end{align*}
Thus, $f(\sqrt{2}\mathbf{e}) \ne \sqrt 2\cdot f(\mathbf{e})$, and consequently, $f$ is not a norm on $\R^n$. 
We are going to show that $C_{\text{NJ}}^{(\sigma)}(d)=2$ for all $\sigma\in [1,\infty)$.
For any $x\in\R^n$, there exist an integer $m\in\N$, and
 $q_1,\ldots, q_m\in \mathbb{Q}$, $x_1,\ldots,x_m\in\mathcal{H}$ such that $	x=\sum_{i=1}^{m}q_ix_i$. 
By the rational homogeneity of $h$ (\cite[Theorem 5.2.1]{Mar09}), we have
\begin{equation*}
	h(x)=h\left(\sum_{i=1}^{m}q_ix_i\right)=\sum_{i=1}^{m}q_ih(x_i)=\sum_{i=1}^{m}q_ig(x_i)\in\mathbb{Q}.
\end{equation*}
By \cite[Corollary 5.2.2]{Mar09}, $h$ is discontinuous.
We now show that $\sup_{0<\|x\|\le 1}|h(x)|=\infty$. 
Suppose that $|h(x)|\le M$ for all $x\in\R^n$ with $0<\|x\|\le 1$ and some $M>0$. 
Let $x\ne 0$ and $q\in\mathbb{Q}$ such that $\frac{1}{2\|x\|}<q<\frac{1}{\|x\|}$. 
Then $|h(x)|=\frac{1}{q}|h(qx)|\le \frac{M}{q}<2M\|x\|\to 0$ as $x\to 0$, which contradicts the discontinuity of $h$. 
Consequently, there exists a sequence $\{x_k\}\subset\R^n$ such that $0<\|x_k\|\leq 1$ for all $k\in\N$ and $M_k:=|h(x_k)|\to\infty$ as $k\to\infty$.
Let $y_k:=-x_k+h(x_k)\cdot\mathbf{e}$. Then
\begin{equation}\label{E2.5-1}
	f(x_k+y_k)=f(h(x_k)\cdot\mathbf{e})
	=\|h(x_k)\cdot\mathbf{e}\|+|h(h(x_k)\cdot\mathbf{e})|
	=M_k+M_k=2M_k,
\end{equation}
\begin{align}
	f(x_k-y_k)&=f(2x_k-h(x_k)\cdot\mathbf{e})\nonumber\\
	&=\|2x_k-h(x_k)\cdot\mathbf{e}\|+|h(2x_k-h(x_k)\cdot\mathbf{e})|\nonumber\\
	&\ge \|h(x_k)\cdot\mathbf{e}\|-\|2x_k\|+|2h(x_k)-h(x_k)|\nonumber\\
	&\ge M_k-2+M_k=2M_k-2\label{E2.5-2},
\end{align}
\begin{equation}\label{E2.5-3}
	f(x_k)=\|x_k\|+|h(x_k)|\leq 1+M_k,
\end{equation}
\begin{align}
	f(y_k)&=f(-x_k+h(x_k)\cdot\mathbf{e})\nonumber\\
	&=\|-x_k+h(x_k)\cdot\mathbf{e}\|+|h(-x_k+h(x_k)\cdot\mathbf{e})|\nonumber\\
	&\le \|-x_k\|+\|h(x_k)\cdot\mathbf{e}\|+|-h(x_k)+h(x_k)|\le 1+M_k.\label{E2.5-4}
\end{align}
Let $\sigma\in[1,\infty)$. By \eqref{G}, \eqref{C1}, \eqref{E2.5-1}-\eqref{E2.5-4} and for sufficiently large $k$, we have
\begin{align*}
C_{\text{NJ}}^{(\sigma)}(d)&\geq\frac{f^\sigma(x_k+y_k)+f^\sigma(x_k-y_k)}{2^{\sigma-1}\left(f^\sigma(x_k)+f^\sigma(y_k)\right)}\\
&\ge\frac{(2M_k)^\sigma+(2M_k-2)^\sigma}{2^{\sigma-1}\left((1+M_k)^\sigma+(1+M_k)^\sigma\right)}\\
&=\frac{2^\sigma+\left(2-\frac{2}{M_k}\right)^\sigma}{2^\sigma\left(1+\frac{1}{M_k}\right)^\sigma}\longrightarrow2\quad\text{as } k\to\infty.
\end{align*}
Hence, $C_{\text{NJ}}^{(\sigma)}(d)\ge 2$. By Theorem~\ref{T2.5}\eqref{T2.5-3}, we have $C_{\text{NJ}}^{(\sigma)}(d)=2$ for all $\sigma\in[1,\infty)$.
\end{example}
}\fi

It is well known that the conventional von Neumann-Jordan constant of a norm equals one if and only if the norm is induced by an inner product \cite[Theorem~II]{JorNeu35}, or equivalently, the norm satisfies the parallelogram law \cite[Theorem~I]{JorNeu35}. 
The following theorem presents a metric version of this result.
\begin{theorem}\label{P2.2}
Let  $\sigma\in[1,\infty)$.
The following assertions hold.
\begin{enumerate}[\rm (i)]
\item\label{P2.2-1}
If 
\begin{gather}\label{P4.5-6}
	f^\sigma(x+y)+f^\sigma(x-y)= 2^{\frac{\sigma}{2}}(f^\sigma(x)+f^\sigma(y))
	\;\;\text{for all}\;\;x,y\in X,
\end{gather} 
then $C_{\text{\rm NJ}}^{(\sigma)}(d)=2^{1-\frac{\sigma}{2}}$.
\item\label{P2.2-2}
Suppose that  $f$ is 2-homogeneous.
If $C_{\text{\rm NJ}}^{(\sigma)}(d)=2^{1-\frac{\sigma}{2}}$, then condition \eqref{P4.5-6} is satisfied.
\end{enumerate}	
\end{theorem}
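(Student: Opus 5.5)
Part (i) is a direct substitution into \eqref{G}. If \eqref{P4.5-6} holds, then for every $(x,y)\ne(0_X,0_X)$ we have $f^\sigma(x)+f^\sigma(y)>0$ (since $f(x)=0$ only at $x=0_X$). Hence
\begin{gather*}
\mathcal{G}^{(\sigma)}_{d}(x,y)=\frac{2^{\frac{\sigma}{2}}(f^\sigma(x)+f^\sigma(y))}{2^{\sigma-1}(f^\sigma(x)+f^\sigma(y))}=2^{1-\frac{\sigma}{2}}.
\end{gather*}
So $\mathcal{G}^{(\sigma)}_{d}$ is constant, and by \eqref{C1} the supremum equals $2^{1-\frac{\sigma}{2}}$.

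For part (ii) I would follow the classical Jordan--von Neumann trick: an upper bound on the constant, applied to a cleverly substituted pair, gives the reverse inequality. Assume $f$ is 2-homogeneous and $C_{\rm NJ}^{(\sigma)}(d)=2^{1-\frac{\sigma}{2}}$. By \eqref{C1}, for all $x,y\in X$,
\begin{gather*}
f^\sigma(x+y)+f^\sigma(x-y)\le 2^{\frac{\sigma}{2}}(f^\sigma(x)+f^\sigma(y)),
\end{gather*}
and this trivially holds also at $(0_X,0_X)$. This gives the "$\le$" half of \eqref{P4.5-6}.

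For the reverse, fix $x,y$ and apply the same inequality to the pair $u:=x+y$, $v:=x-y$. Then $u+v=2x$ and $u-v=2y$, so
\begin{gather*}
f^\sigma(2x)+f^\sigma(2y)\le 2^{\frac{\sigma}{2}}(f^\sigma(x+y)+f^\sigma(x-y)).
\end{gather*}
By 2-homogeneity, $f(2x)=2f(x)$ and $f(2y)=2f(y)$, so the left side equals $2^\sigma(f^\sigma(x)+f^\sigma(y))$. Dividing by $2^{\frac{\sigma}{2}}$ gives
\begin{gather*}
2^{\frac{\sigma}{2}}(f^\sigma(x)+f^\sigma(y))\le f^\sigma(x+y)+f^\sigma(x-y),
\end{gather*}
which is the "$\ge$" half. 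Combining the two halves yields \eqref{P4.5-6}.

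No step is a real obstacle. The only points needing care are:
\begin{enumerate}[\rm (a)]
\item the pair $(u,v)$ may be $(0_X,0_X)$, but only when $x=y=0_X$, where the inequality is trivial;
\item the use of 2-homogeneity, which is exactly the rescaling that makes the substitution close up. No evenness or translation-invariance is needed, since only $u\pm v$ appear and never $v-u$.
\end{enumerate}
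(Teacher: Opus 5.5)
Your proof is correct and follows the paper's argument. Part (i) is done by direct substitution. Part (ii) applies the bound $f^\sigma(u+v)+f^\sigma(u-v)\le 2^{\frac{\sigma}{2}}(f^\sigma(u)+f^\sigma(v))$ twice, once to $(x,y)$ and once to a pair built from $x+y$ and $x-y$. The only difference is cosmetic: the paper takes $\left(\frac{x+y}{2},\frac{x-y}{2}\right)$ and needs $f(z/2)=f(z)/2$ from Remark~\ref{R1.2}\eqref{R1.2-2}, whereas your choice $(x+y,x-y)$ uses $f(2z)=2f(z)$ directly, which is marginally cleaner.
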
	

\begin{proof}
\begin{enumerate}[\rm (i)]
\item		
Suppose that condition \eqref{P4.5-6} is satisfied.
Then
\begin{gather*}
\mathcal{G}^{(\sigma)}_{d}(x,y)\overset{\eqref{G}}{=}\dfrac{f^\sigma(x+y)+f^\sigma(x-y)}{2^{\sigma-1}(f^\sigma(x)+f^\sigma(y))}=2^{1-\frac{\sigma}{2}}
\end{gather*}	
for all $x,y\in X$ with $(x,y)\ne (0_X,0_X)$. 
By \eqref{C1}, $C_{\text{\rm NJ}}^{(\sigma)}(d)=2^{1-\frac{\sigma}{2}}$.
\item
Suppose that $f$ is 2-homogeneous and $C_{\text{\rm NJ}}^{(\sigma)}(d)=2^{1-\frac{\sigma}{2}}$.
By \eqref{G} and \eqref{C1}, 
\begin{gather}\label{P4.5-7}
f^\sigma(u+v)+f^\sigma(u-v)\le 2^{\frac{\sigma}{2}}(f^\sigma(u)+f^\sigma(v))
\end{gather}	
for all $u,v\in X$.
Let $x,y\in X$ with $(x,y)\ne (0_X,0_X)$. 
Then condition \eqref{P4.5-7} is satisfied with $x$ and $y$ in place of $u$ and $v$, respectively.
On the other hand, in view of  \eqref{P4.5-7} with $u:=\frac{x+y}{2}$ and $v:=\frac{x-y}{2}$, we have
\begin{align*}
f^\sigma(x)+f^\sigma(y)
\le 2^{\frac{\sigma}{2}}\left(f^\sigma\left(\frac{x+y}{2}\right)+f^\sigma\left(\frac{x-y}{2}\right)\right)
=\dfrac{f^\sigma(x+y)+f^\sigma(x-y)}{2^{\frac{\sigma}{2}}},
\end{align*}	
where the latter follows from Remark~\ref{R1.2}\eqref{R1.2-2}.
Thus, condition \eqref{P4.5-6} is satisfied.
\end{enumerate}
The proof is complete.
\end{proof}	

%\begin{remark}
%When $f$ is a norm on $X$ and $\sigma:=2$, Theorem~\ref{P2.2} recaptures 
%\end{remark}	

The next example shows that there exists a non-normable metric satisfying the statements in Theorem~\ref{P2.2}.
\begin{example}\label{E3.13}
Let $(\R^n,\R)$ be a vector space equipped with a norm $\|\cdot\|$, $\mathbf{e}\in\R^n$ be a unit vector,  and $S:=\{\mathbf{e},\sqrt{2}\mathbf{e}\}$.
Consider the restriction $(\R^n,\mathbb{Q})$ of $(\R^n,\R)$.
Observe that $S$ is a linearly independent set in $(\R^n,\mathbb{Q})$. 
By Lemma~\ref{L2.10}\eqref{L2.10-1}, there exists a  basis $\mathcal{B}\supset S$ of $(\R^n,\mathbb{Q})$.
For any $x\in\R^n$, define $A(x):=\sqrt{q_1^2+\cdots+q_l^2}$, where $l \in \mathbb{N}$ and $q_1,\ldots,q_l \in \mathbb{Q}$ are the coefficients in the representation of $x$ with respect to the basis $\mathcal{B}$.
Define 
\begin{gather}\label{E3.10-1}
d(x,y):=A(x-y)\;\;\text{for all}\;\; x,y\in\R^n. 
\end{gather}

It is clear that $d$ is nonnegative, symmetric, and $d(x,y)=0$ if and only if $x=y$.
Let $x,y,z \in \mathbb{R}^n$. Then there exist integers $m,k \in \mathbb{N}$ and finite sets $\mathbf{B} := \{\mathbf{b}_1,\ldots,\mathbf{b}_m\}$ and $\mathbf{C} := \{\mathbf{c}_1,\ldots,\mathbf{c}_k\}$
such that the vectors $x - y$ and $y - z$ can be represented in terms of elements of $\mathbf{B}$ and $\mathbf{C}$, respectively.
Let  $\mathbf{H}:=\mathbf{B}\cup\mathbf{C} \subset\mathcal{B}$.
Then, there exist  numbers $\al_i,\be_i\in\mathbb{Q}$ and vectors $\mathbf{h}_i\in\mathbf{H}$ $(i=1,\ldots,m+k)$ 
such that $x-y=\sum_{i=1}^{m+k}\alpha_i \mathbf{h}_i$ and $y-z=\sum_{i=1}^{m+k}\beta_i \mathbf{h}_i.$
Then $x-z=\sum_{i=1}^{m+k}(\alpha_i+\beta_i)\mathbf{h}_i$, and consequently,
\begin{align*}
d(x,z)
\overset{\eqref{E3.10-1}}{=}
\sqrt{\sum_{i=1}^{m+k}(\al_i+\be_i)^2}
\le \sqrt{\sum_{i=1}^{m+k}\al_i^2}+\sqrt{\sum_{i=1}^{m+k}\be^2_i}
\overset{\eqref{E3.10-1}}{=}d(x,y)+d(y,z).
\end{align*}	
Thus, $d$ is a metric on $\R^n$.

The function $f(x):=d(x,0_{\R^n})=A(x)$ for all $x\in \R^n$ is
 $\lambda$-homogeneous for any nonnegative $\lambda\in\mathbb{Q}$.
 Indeed, for $x\in\R^n$ and $\lambda\in\mathbb{Q}$ with $\lambda\ge 0$, there exist an integer $p\in\N$, scalars
$\gamma_1,\ldots, \gamma_p\in \mathbb{Q}$ and vectors $\mathbf{d}_1,\ldots,\mathbf{d}_p\in \mathcal{B}$ such that $x=\sum_{i=1}^{p}\gamma_i\mathbf{d}_i$.
Then
\begin{gather*}
	f(\lambda x)=\sqrt{\sum_{i=1}^{p}(\lambda \gamma_i)^2}=|\lambda|\sqrt{\sum_{i=1}^{p}\gamma^2_i}=\lambda f(x).
\end{gather*}
We have $\mathbf{e}=1\cdot\mathbf{e}$ and $\sqrt{2}\mathbf{e}=1\cdot\sqrt{2}\mathbf{e}$, and consequently, $f(\mathbf{e})=f(\sqrt{2}\mathbf{e})=1$.
Thus, $f$ is not a norm on $X$ since  $f(\sqrt{2}\mathbf{e})\ne \sqrt{2}f(\mathbf{e})$. 
Observe that condition \eqref{P4.5-6} is satisfied with $\sigma:=2$.
Indeed, for any $x,y\in\R^n$, there exists a finite subset of $\mathcal{B}$ such that $x,y,x+y,$ and $x-y$ are all represented using the same vectors in this set, and hence involve the same number of coefficients. 
Therefore, the parallelogram law follows from the classical Euclidean setting.
By Theorem~\ref{P2.2}, we have $C_{\text{NJ}}^{(2)}(d)=1$.

We now show that only the addition operation is continuous.
Observe that 
\begin{gather}\label{E3.10-2}
d(x+y,x'+y')\leq d(x,x')+d(y,y')\;\;\text{for all}\;\;x,y,x',y'\in\R^n.
\end{gather}
Indeed, for any $x,y,x',y'\in\R^n$, there exists a $s\in\N$,  $\theta_i,\vartheta_i, \theta_i',\vartheta_i'\in\mathbb{Q}$ and  $\mathbf{u}_i\in\mathcal{B}$ $(i=1,\ldots,s)$ 
such that $x=\sum_{i=1}^{s}\theta_i \mathbf{u}_i$, $y=\sum_{i=1}^{s}\vartheta_i \mathbf{u}_i$, $x'=\sum_{i=1}^{s}\theta_i'\mathbf{u}_i$, and $y'=\sum_{i=1}^{s}\vartheta_i'\mathbf{u}_i$. 
Then
% $x-x'=\sum_{i=1}^{s}(\alpha_i-\alpha_i')\mathbf{d}_i$, $y-y'=\sum_{i=1}^{s}(\beta_i-\beta_i')\mathbf{d}_i$, $x-x'+y-y'=\sum_{i=1}^{\ell}(\alpha_i-\alpha_i'+\beta_i-\beta_i')\mathbf{b}_i$, and consequently,
\begin{align*}
d(x+y,x'+y')
&\overset{\eqref{E3.10-1}}{=}\sqrt{\sum_{i=1}^{s}(\theta_i-\theta_i'+\vartheta_i-\vartheta_i')^2}\\
&\le \sqrt{\sum_{i=1}^{s}(\theta_i-\theta_i')^2}+\sqrt{\sum_{i=1}^{s}(\vartheta_i-\vartheta_i')^2}
\overset{\eqref{E3.10-1}}{=}d(x,x')+d(y,y').
\end{align*}
Let $x,y\in \R^n$ and $\{x_k\}, \{y_k\}\subset \R^n$ with $d(x_k,x)\to 0$ and $d(y_k,x)\to 0$ as $k\to\infty$. 
By \eqref{E3.10-2},
\begin{equation*}
d(x_k+y_k,x+y)\leq d(x_k,x)+d(y_k,y)\to 0 \;\;\text{as}\;\; k\to\infty.
\end{equation*}
On the other hand, let $z_k:=\mathbf{e}$ $(k\in\N)$ and
$\{\lambda_k\}\subset\mathbb{Q}$ with $\lambda_k\to\sqrt{2}$ as $k\to\infty$. 
Then 
\begin{equation*}
d(\lambda_k z_k,\sqrt{2}\mathbf{e})=A(\lambda_k\cdot\mathbf{e}+(-1)\cdot\sqrt{2}\mathbf{e})=\sqrt{\lambda_k^2+1}\to\sqrt{3}\;\;\text{as}\;\; k\to\infty.
\end{equation*}
Therefore, the scalar multiplication operation is not continuous.
\end{example}

The following example shows that the 2-homogeneity is essential for Theorem~\ref{P2.2}\eqref{P2.2-2}.
\begin{example}
Let $X$ be a Hilbert space, 
$d(x,y):=\min\{\|x-y\|,1\}$ for all $x,y\in X$, and $\sigma:=2$.
The function $f(x):=d(x,0_X)=\min\{\|x\|,1\}$ for all $x\in X$ is not 2-homogeneous. 
By Example~\ref{Exam2.3}, $C_{\text{NJ}}^{(2)}(d)=1$.
Let $\mathbf{e}\in X$ be a unit vector, and  $x=y:=\mathbf{e}$.
Then $f(x+y)=f(x)=f(y)=1$ and $f(x-y)=0$, and consequently,
$f^2(x+y)+f^2(x-y)\ne 2(f^2(x)+f^2(y))$.
\end{example}	

The following statement provides a necessary and sufficient condition under which a metric satisfying the parallelogram law with $\sigma:=2$ is normable.
\begin{theorem}\label{T3.14}
The following assertions hold.
\begin{enumerate}[\rm (i)]
\item\label{T3.14-1}
If $f$ is a norm induced by an inner product, then
\begin{gather}\label{C3.13-1}
f^2(x+y)+f^2(x-y)= 2(f^2(x)+f^2(y))\;\;\text{for all}\;\;x,y\in X.
\end{gather}	
\item\label{T3.14-2}
Suppose that $f$ is positively homogeneous.	
If condition \eqref{C3.13-1} is satisfied, then $f$ is a norm induced by an inner product.
\end{enumerate}		
\end{theorem}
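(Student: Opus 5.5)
Part (i) is the classical direction. If $f(x)=\sqrt{\langle x,x\rangle}$ for some inner product, then expanding $\langle x\pm y,x\pm y\rangle$ by bilinearity and symmetry gives $f^2(x\pm y)=f^2(x)\pm 2\langle x,y\rangle+f^2(y)$. Adding the two identities cancels the cross terms and yields \eqref{C3.13-1}.

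For part (ii), the plan is to show that $f$ is a norm and then invoke the Jordan--von Neumann theorem \cite[Theorem~I]{JorNeu35}: a norm satisfying the parallelogram law is induced by an inner product. By Proposition~\ref{P1.5}, it suffices to prove that $f$ is absolutely homogeneous and midpoint convex.

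\emph{Evenness.} Put $x=0_X$ in \eqref{C3.13-1}. This gives $f^2(y)+f^2(-y)=2f^2(0_X)+2f^2(y)=2f^2(y)$, because $f(0_X)=d(0_X,0_X)=0$. Hence $f^2(-y)=f^2(y)$, and since $f\ge 0$ we get $f(-y)=f(y)$. Together with positive homogeneity, Remark~\ref{R1.2}\eqref{R1.2-1} gives absolute homogeneity.

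\emph{Midpoint convexity.} Set $u=\frac{x+y}{2}$ and $v=\frac{x-y}{2}$ in \eqref{C3.13-1}. This gives
\begin{gather*}
f^2(x)+f^2(y)=2\left(f^2\left(\tfrac{x+y}{2}\right)+f^2\left(\tfrac{x-y}{2}\right)\right)\ge 2f^2\left(\tfrac{x+y}{2}\right).
\end{gather*}
Hence $f\left(\frac{x+y}{2}\right)\le\sqrt{\frac{f^2(x)+f^2(y)}{2}}$. This root-mean-square bound is weaker than the arithmetic-mean bound required for midpoint convexity, and closing this gap is the main obstacle.

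To close it, I would use positive homogeneity to rescale. For $x,y\neq 0_X$ and $t>0$, apply the bound to the pair $\frac{x}{t}$ and $\frac{y}{1-t}$, where $t\in(0,1)$, and use the identity $tx'+(1-t)y'=x+y$. It is cleaner to work with the triangle inequality directly. Set $a=f(x)$, $b=f(y)$, $x'=x/a$ and $y'=y/b$, so that $f(x')=f(y')=1$. The quantity to control is
\begin{gather*}
\frac{x+y}{a+b}=\lambda x'+(1-\lambda)y',\qquad \lambda=\frac{a}{a+b}.
\end{gather*}
So it suffices to show that the unit ball $\{f\le 1\}$ is convex.

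\emph{Convexity of the unit ball.} The RMS bound already shows that $f\left(\frac{u+w}{2}\right)\le 1$ whenever $f(u),f(w)\le 1$. So the ball is midpoint convex, and iterating gives convexity along dyadic $\lambda$. To pass to all $\lambda\in[0,1]$ I need continuity of $f$ along lines. Proposition~\ref{P2.3}\eqref{P2.3-1} gives only $d$-continuity, so instead I would get it from positive homogeneity, using that $s\mapsto f(x'+s(y'-x'))$ is controlled through the dyadic convex combinations. If that step resists, the fallback is to derive subadditivity directly by a dyadic approximation argument combined with homogeneity.

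Once $f$ is a norm, \cite[Theorem~I]{JorNeu35} completes the proof of (ii).
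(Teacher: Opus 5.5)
Your part (i), the derivation of evenness and absolute homogeneity, and the midpoint convexity of $B:=\{x\in X: f(x)\le 1\}$ from the root-mean-square bound are all correct. The reduction of the triangle inequality to convexity of $B$ via $\frac{x+y}{a+b}=\lambda x'+(1-\lambda)y'$ is also correct.

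The genuine gap is the passage from dyadic $\lambda$ to all $\lambda\in[0,1]$. This is where the whole difficulty of your route sits, and the proposal does not supply it. Put $c_\mu:=\mu a+(1-\mu)b$ for $a,b\in B$. Knowing $f(c_\mu)\le 1$ for dyadic $\mu$ gives no information at other $\lambda$ without further input. Proposition~\ref{P2.3}\eqref{P2.3-1} gives continuity only with respect to $d$, whose topology need not be related to the real parameter on a line. ``Controlled through the dyadic convex combinations'' is not a mechanism.

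Example~\ref{E3.13} shows the gap is real. There \eqref{C3.13-1} holds, so $B$ is midpoint convex by your own argument, and $f$ is even and $\lambda$-homogeneous for every nonnegative rational $\lambda$. Yet $B$ is not convex: $\sqrt2\mathbf{e}\in B$ and $0\in B$, while $\frac{6}{5\sqrt2}\cdot\sqrt2\mathbf{e}=\frac65\mathbf{e}\notin B$. So real positive homogeneity must be used explicitly at exactly this step.

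Here is one way to do it. Given $\lambda\in(0,1)$ and $s\in(0,1)$, choose dyadic $\mu_1<\lambda<\mu_2$ in $[0,1]$ with $\lambda$ so close to $\frac{\mu_1+\mu_2}{2}$ that $s_1:=\frac{2s(\mu_2-\lambda)}{\mu_2-\mu_1}\le 1$ and $s_2:=\frac{2s(\lambda-\mu_1)}{\mu_2-\mu_1}\le 1$. This is possible because $\frac{1}{2s}>\frac12$. Then $sc_\lambda=\frac12(s_1c_{\mu_1}+s_2c_{\mu_2})$ and $f(s_ic_{\mu_i})=s_if(c_{\mu_i})\le 1$, so $sf(c_\lambda)=f(sc_\lambda)\le 1$; now let $s\uparrow 1$.

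Alternatively, $g(s):=f^2(x+sh)$ is midpoint convex, and by \eqref{C3.13-1} it is bounded above by $2f^2(x)+2s^2f^2(h)$. The Bernstein--Doetsch theorem then makes $g$ convex, hence $f^2$ is convex and $B=\{f^2\le 1\}$ is convex. With either repair your route works: $f$ is a norm, and \cite[Theorem~I]{JorNeu35} finishes.

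The paper avoids this issue entirely. It sets $\langle x,y\rangle:=\frac14(f^2(x+y)-f^2(x-y))$ and gets additivity in the first argument from \eqref{C3.13-1} alone. It gets real homogeneity from additivity plus the bound $|\langle\gamma x,y\rangle|\le\frac12(f^2(x)+f^2(y))$ for $|\gamma|\le 1$, which is where positive homogeneity enters. The triangle inequality then comes for free from $f=\sqrt{\langle x,x\rangle}$, so the classical theorem is reproved rather than invoked.
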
	

\begin{proof}
\begin{enumerate}[\rm (i)]
\item 
Suppose that $f$ is a norm induced by an inner product $\langle \cdot,\cdot\rangle$, i.e, $f(x)=\sqrt{\langle x,x\rangle}$ for all $x\in X$.
Let $x,y\in X$.
Then
\begin{gather*}
f^2(x+y)=\langle x,x\rangle+\langle y,y\rangle+2\langle x,y\rangle,\;\;
f^2(x-y)=\langle x,x\rangle+\langle y,y\rangle-2\langle x,y\rangle,
\end{gather*}	
and consequently, $f^2(x+y)+f^2(x-y)=\langle x,x\rangle+\langle y,y\rangle=f^2(x)+f^2(y).$
\item
Suppose that $f$ is positively homogeneous and condition \eqref{C3.13-1} is satisfied.
Define
\begin{gather}\label{C3.13-2}
\langle x,y\rangle:=\frac{1}{4}(f^2(x+y)-f^2(x-y))\;\;\text{for all}\;\;x,y\in X.
\end{gather}
We are going to show that $\langle\cdot,\cdot\rangle$
is an inner product on $X$.
We have $\langle x,x\rangle=f^2(x)\ge 0$ for all $x\in X$, and the equality holds if and only if  $x=0_X$.
By \eqref{C3.13-1}, $f^2(y)+f^2(-y)=2f^2(y)$, i.e., $f^2(-y)=f^2(y)$ for all $y\in X$.
From this and the nonnegativity, we have $f$ is even, and consequently, it is absolutely homogeneous.
Then
\begin{gather*}
\langle y, x\rangle=\frac{1}{4}(f^2(y+x)-f^2(y-x))=\frac{1}{4}(f^2(x+y)-f^2(x-y))=\langle x,y\rangle,\\
\langle -x,y\rangle=\frac{1}{4}(f^2(y-x)-f^2(y+x))=-\frac{1}{4}(f^2(x+y)-f^2(x-y))=-\langle x,y\rangle
\end{gather*}	
for all $x,y\in X$.\\
\textbf{Claim 1.} $\langle x+y,z\rangle=\langle x,z\rangle+\langle y,z\rangle$ for all $x,y,z\in X$.
Indeed,
\begin{align*}
\langle x+y,z\rangle+\langle x-y,z\rangle
&=\frac{1}{4}\left[\left(f^2(x+z+y)+f_2^2(x+z-y)\right)-\left(f^2(x-z+y)+f^2(x-z-y)\right)\right]\\
&\overset{\eqref{C3.13-1}}{=}\frac{1}{4}\left[2(f^2(x+z)+f^2(y))-2(f^2(x-z)+f^2(y))\right]\\
&=\dfrac{1}{2}(f^2(x+z)-f^2(x-z))
\overset{\eqref{C3.13-2}}{=} 2\langle x,z\rangle.
\end{align*}
By interchanging $x$ and $y$, we obtain $\langle x+y,z\rangle+\langle y-x,z\rangle=2\langle y,z\rangle$.
Note that $\langle x-y,z\rangle+\langle y-x,z\rangle=0$.
Thus, $\langle x+y,z\rangle=\langle x,z\rangle+\langle y,z\rangle$.\\
\textbf{Claim 2.}  $\langle\lambda x,y\rangle=\lambda \langle x,y\rangle$ for all $x,y\in X$ and $\lambda\in\R$.
Let $x,y\in X$.
In view of {Claim 1}, the function $\lambda\mapsto \xi_{xy}(\lambda):=\langle\lambda x,y\rangle$ is additive.
 By Remark~\ref{R1.2}\eqref{R1.2-3}, $\xi_{xy}$ is $\lambda$-homogeneous for any $\lambda\in\mathbb{Q}$. 
We have $\xi_{xy}(0)=\langle 0_X,y\rangle=0$.
By the  absolute homogeneity, we have for any $0<|\gamma|\leq 1$ that
\begin{align*}
|\xi_{xy}(\gamma)|	
&=\frac{1}{4}\left|f^2(\gamma x+y)-f^2(\gamma x-y)\right|
\leq \frac{1}{4}(f^2(\gamma x+y)+f^2(\gamma x-y))\\
&\overset{\eqref{C3.13-1}}{=}\frac{1}{2}(f^2(\gamma x)+f^2(y))
=\frac{1}{2}(\gamma^2f^2(x)+f^2(y))\leq \frac{1}{2}(f^2(x)+f^2(y))=:M.
\end{align*}
For any $\lambda\ne 0$, there exists a $q\in\mathbb{Q}$ such that $\frac{1}{2|\lambda|}<q<\frac{1}{|\lambda|}$ since $\mathbb{Q}$ is dense in $\R$.
Then $0<|q\lambda|<1$, and consequently, $|\xi_{xy}(\lambda)|=\frac{1}{q}|\xi_{xy}(q\lambda)|\le \frac{M}{q}<2M|\lambda|\to 0$ as $\lambda\to 0$. 
Thus, $\xi_{xy}$ is continuous at $0$.
In view of the additivity, $\xi_{xy}$ is continuous on $\R$.
Let $\lambda\in\R$.
Since $\mathbb{Q}$ is dense in $\R$, one can find a sequence $\{q_k\}\subset\mathbb{Q}$ such that $q_k\to\lambda$ as $k\to\infty$.
In view of the continuity of $\xi_{xy}$, we have
\begin{gather*}
\xi_{xy}(\lambda)=\lim_{k\to\infty}\xi_{xy}(q_k)=\lim_{k\to\infty}q_k\xi_{xy}(1)=\langle x,y\rangle\lim_{k\to\infty}q_k=\lambda\langle x,y\rangle.
\end{gather*}

Therefore, the mapping $\langle\cdot,\cdot\rangle$ is an inner product on $X$. Observe that $f(x)\overset{\eqref{C3.13-2}}{=}\sqrt{\langle x, x\rangle}$ for all $x\in X$.
Thus, $f$ is a norm on $X$.
\end{enumerate}	
The proof is complete.
\end{proof}	

\begin{remark}
In \cite[Theorem~I]{JorNeu35}, Jordan and Neumann showed that a norm satisfying condition \eqref{C3.13-1} is induced by an inner product. 
Theorem~\ref{T3.14}\eqref{T3.14-2} strengthens this result by showing that the positive homogeneity of the function $f$ together with condition \eqref{C3.13-1} guarantees the norm and inner product structure of $f$.
\end{remark}

The following statement establishes parametrized formulas for the constant \eqref{C1}.
\begin{proposition}\label{P2.10}
Let $\sigma\in[1,\infty)$ and 
\begin{gather*}
H^\sigma_d(x,y,t):=\dfrac{f^\sigma(x+ty)+f^\sigma(x-ty)}{2^{\sigma-1}(1+t^\sigma)}\;\;
\text{for all}\;\;x,y\in X\;\;\text{and}\;\;t\in[0,1].
\end{gather*}		
If $f$ is absolutely homogeneous, then 
\begin{align}
C_{\text{\rm NJ}}^{(\sigma)}(d)&=\sup_{f(x)=f(y)=1,\;t\in[0,1]}H^\sigma_d(x,y,t)\label{C2}\\
&=\sup_{f(x)=1,\;f(y)\le1,\;t\in[0,1]}H^\sigma_d(x,y,t)\label{C3}\\
&=\sup_{f(x)\le 1,\;f(y)=1,\;t\in[0,1]}H^\sigma_d(x,y,t)\label{C4}\\
&=\sup_{f(x)\le1,\;f(y)\le1,\;t\in[0,1]}H^\sigma_d(x,y,t)\label{C5}.
\end{align}	
\end{proposition}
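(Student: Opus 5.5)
We will prove the chain of equalities by sandwiching. Write $S_1,S_2,S_3,S_4$ for the suprema in \eqref{C2}--\eqref{C5}. Each feasible set contains the previous one, so $S_1\le S_2\le S_4$ and $S_1\le S_3\le S_4$. It therefore suffices to prove two inequalities:
\begin{gather*}
C_{\text{NJ}}^{(\sigma)}(d)\le S_1 \quad\text{and}\quad S_4\le C_{\text{NJ}}^{(\sigma)}(d).
\end{gather*}
Throughout we assume $X\ne\{0_X\}$, since otherwise the constant is not defined. The only structural input is absolute homogeneity of $f$. It gives evenness, so $f(x-y)=f(y-x)$ and hence $\mathcal{G}^{(\sigma)}_d(x,y)=\mathcal{G}^{(\sigma)}_d(y,x)$. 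It also gives $f(\lambda z)=|\lambda|f(z)$, so $f(z)>0$ for $z\ne 0_X$ lets us normalize vectors.

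For $C_{\text{NJ}}^{(\sigma)}(d)\le S_1$, fix $(x,y)\ne(0_X,0_X)$. By the symmetry of $\mathcal{G}^{(\sigma)}_d$ we may assume $f(x)\ge f(y)$, so $x\ne 0_X$. Put $u:=x/f(x)$ and $t:=f(y)/f(x)\in[0,1]$.

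\emph{Case $y\ne 0_X$.} Put $v:=y/f(y)$. Then $f(u)=f(v)=1$ and $x\pm y=f(x)(u\pm tv)$. Absolute homogeneity gives $f^\sigma(x\pm y)=f^\sigma(x)f^\sigma(u\pm tv)$. It also gives $f^\sigma(x)+f^\sigma(y)=f^\sigma(x)(1+t^\sigma)$. Hence $\mathcal{G}^{(\sigma)}_d(x,y)=H^\sigma_d(u,v,t)\le S_1$.

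\emph{Case $y=0_X$.} Then $\mathcal{G}^{(\sigma)}_d(x,0_X)=2^{2-\sigma}$. This equals $H^\sigma_d(u,u,0)=2f^\sigma(u)/2^{\sigma-1}$, which is again $\le S_1$.

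Taking the supremum over $(x,y)$ yields the first inequality.

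For $S_4\le C_{\text{NJ}}^{(\sigma)}(d)$, take $x,y$ with $f(x),f(y)\le1$ and $t\in[0,1]$. If $(x,ty)=(0_X,0_X)$, then $H^\sigma_d(x,y,t)=0$. This is below $C_{\text{NJ}}^{(\sigma)}(d)$, which is at least $2^{2-\sigma}>0$ by Theorem~\ref{T2.5}\eqref{T2.5-1}.

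Otherwise, apply the definition \eqref{C1} to the pair $(x,ty)$ and use $f(ty)=tf(y)$:
\begin{gather*}
f^\sigma(x+ty)+f^\sigma(x-ty)\le C_{\text{NJ}}^{(\sigma)}(d)\,2^{\sigma-1}\bigl(f^\sigma(x)+t^\sigma f^\sigma(y)\bigr)\le C_{\text{NJ}}^{(\sigma)}(d)\,2^{\sigma-1}(1+t^\sigma).
\end{gather*}
The last step uses $f(x),f(y)\le 1$. Dividing by $2^{\sigma-1}(1+t^\sigma)>0$ gives $H^\sigma_d(x,y,t)\le C_{\text{NJ}}^{(\sigma)}(d)$. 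This holds trivially if the constant is infinite.

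There is no real obstacle. The only points needing care are the degenerate cases: $y=0_X$ in the first inequality, which is handled by choosing $t=0$, and $(x,ty)=(0_X,0_X)$ in the second. One must also make sure that only absolute homogeneity is used, and not subadditivity, since $f$ need not be a norm here.
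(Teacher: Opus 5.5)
Your proof is correct and follows the paper's sandwich argument. You normalize by $f(x)$, after using evenness to assume $f(x)\ge f(y)$, to get $C_{\text{NJ}}^{(\sigma)}(d)\le S_1$, and you apply the definition of the constant to the pair $(x,ty)$ to get $S_4\le C_{\text{NJ}}^{(\sigma)}(d)$; these are the paper's Claims~1 and~3. The one difference is that you close the chain with the feasible-set inclusions $S_1\le S_3$ and $S_2\le S_4$, which makes the paper's separate rescaling step (its Claim~2, $A_2\le A_3$) unnecessary.
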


\begin{proof}
Let $A_1$, $A_2$, $A_3$ and $A_4$ denote the right-hand sides of \eqref{C2}-\eqref{C5}, respectively.
It is clear that $A_1\le A_2$ and $A_3\le A_4$.
Since $H^\sigma_d(x,y,0)=2^{2-\sigma}$ for all  $x,y\in X$ with $f(x)=f(y)=1$, we have
$A_i\ge 2^{2-\sigma}$ $(i=1,2,3,4)$. \\
\textbf{Claim 1.} $C_{\text{NJ}}^{(\sigma)}(d)\le A_1$.
Let $x,y\in X$ with $(x,y)\ne (0_X, 0_X)$.
By \eqref{G}, we have $\mathcal{G}_{d}^{(\sigma)}(x,0_X)=\mathcal{G}_{d}^{(\sigma)}(0_X,y)=2^{2-\sigma}\le A_1$.
Suppose that $x\ne 0_X$ and $y\ne 0_X$.
Let $u:=\frac{x}{f(x)}$, $v:=\frac{y}{f(y)}$ and $t:=\frac{f(y)}{f(x)}$.
Then $f(u)=f(v)=1$.
If $f(x)\ge f(y)$, then $t\in(0,1]$, and consequently,
\begin{align*}
\mathcal{G}_{d}^{(\sigma)}(x,y)
&\overset{\eqref{G}}{=}\dfrac{f^\sigma(f(x)\cdot(u+tv))+f^\sigma(f(x)\cdot(u-tv))}{2^{\sigma-1}(f^\sigma(f(x)\cdot u)+f^\sigma(f(x)t\cdot v))}\\
&=\dfrac{f^\sigma(u+tv)+f^\sigma(u-tv)}{2^{\sigma-1}(1+t^\sigma)}=H^\sigma_d(u,v,t)\le A_1.
\end{align*}
If $f(x)<f(y)$, then $t':=t\iv\in(0,1)$ and
\begin{align*}
\mathcal{G}_{d}^{(\sigma)}(x,y)
&\overset{\eqref{G}}{=}\dfrac{f^\sigma(f(y)\cdot(v+t'u))+f^\sigma(f(y)\cdot(v-t'u))}{2^{\sigma-1}(f^\sigma(f(y)\cdot v)+f^\sigma(f(y)t'\cdot u))}\\
&=\dfrac{f^\sigma(v+t'u)+f^\sigma(v-t'u)}{2^{\sigma-1}(1+t'^\sigma)}=H^\sigma_d(v,u,t')\le A_1.
\end{align*}
In view of \eqref{C1}, we have $C_{\text{NJ}}^{(\sigma)}(d)\le A_1$.\\
\textbf{Claim 2.} $A_2\le A_3$. 
Let $x,y\in X$ with $f(x)=1$, $f(y)\le1$, and $t\in [0,1]$. 
If $y=0_X$, then
\begin{equation*}
H^\sigma_d(x,y,t)=\dfrac{f^\sigma(x)+f^\sigma(x)}{2^{\sigma-1}(1+t^\sigma)}=\frac{2}{2^{\sigma-1}(1+t^\sigma)}\le \frac{2}{2^{\sigma-1}}=2^{2-\sigma}\le A_3.
\end{equation*}
Suppose that $y\ne 0_X$. 
Let $z:=\frac{y}{f(y)}$ and $t':=tf(y)\in[0,1]$. 
Then $f(z)=1$, and
\begin{equation*}
H^\sigma_d(x,y,t)=\dfrac{f^\sigma(x+t'z)+f^\sigma(x-t'z)}{2^{\sigma-1}(1+t^\sigma)}
\le \dfrac{f^\sigma(x+t'z)+f^\sigma(x-t'z)}{2^{\sigma-1}(1+t'^\sigma)}=H^\sigma_d(x,z,t')\le A_3.
\end{equation*}
\textbf{Claim 3.} $A_4\le C_{\text{\rm NJ}}^{(\sigma)}(d)$. Let $x,y\in X$ with $f(x)\le 1, f(y)\le1$, and $t\in [0,1]$. If $x=0_X$, then
\begin{equation*}
H^\sigma_d(x,y,t)=\dfrac{f^\sigma(ty)+f^\sigma(-ty)}{2^{\sigma-1}(1+t^\sigma)}=\frac{2t^\sigma f^\sigma(y)}{2^{\sigma-1}(1+t^\sigma)}\le \frac{2}{2^{\sigma-1}}\le C_{\text{\rm NJ}}^{(\sigma)}(d),
\end{equation*}
where the last inequality comes from Theorem~\ref{T2.5}\eqref{T2.5-1}.
Suppose that $x\ne 0_X$.
Let $w:=ty$.
Then $f^\sigma(w)=t^\sigma f^\sigma(y)\leq t^\sigma$, and consequently,
\begin{equation*}
H^\sigma_d(x,y,t)=\dfrac{f^\sigma(x+ty)+f^\sigma(x-ty)}{2^{\sigma-1}(1+t^\sigma)}
\le \dfrac{f^\sigma(x+w)+f^\sigma(x-w)}{2^{\sigma-1}(f^\sigma(x)+f^\sigma(w))}
\overset{\eqref{C1}}{\le}
 C_{\text{\rm NJ}}^{(\sigma)}(d).
\end{equation*}
This completes the proof.
\end{proof}

\begin{remark}
When $f$ is a norm, equality~\eqref{C2} can be found in~\cite[p.~2]{CuiHuaHudKac15}. 
If $f$ is a norm and $\sigma = 2$, the equalities are studied in~\cite[Formula~(1) and Proposition~2.2]{YanWan06}.

%{\color{red}
%When $\sigma\in[1,\infty)$ and $f$ is a norm, the equality in \eqref{C2} can be found in  \cite[p. 2]{CuiHuaHudKac15}, while the other equivalence presentations seem to be new. 
%When $\sigma=2$ and $f$ is a norm, the equalities in \eqref{C2}, \eqref{C3}, and \eqref{C5} can be found in
\end{remark}

\section{A von Neumann-Jordan constant 
of metrics on product spaces}\label{S4}

We first present a general framework for constructing metrics on product spaces.
Let $n\ge 2$ and $\Omega_n:=\{(t_1,\ldots,t_{n})\in\R^{n}_+\mid \sum_{i=1}^nt_i =1\}$.
Note that $\Omega_n$  is a convex and compact set in $\R^n$.
Let $\mathbf{e}_1,\ldots,\mathbf{e}_n$ be the standard basis vectors of $\R^n$.
Denote by $\pmb{\Psi}_n$ the class of all continuous convex functions $\psi:\Omega_n\to\R_+$ satisfying
\begin{gather}\label{b}
\psi(\mathbf{e}_1)=\cdots=\psi(\mathbf{e}_n)=1,
\end{gather}	
and
\begin{gather}\label{I}
	\psi(t_1,\ldots,t_n)\ge (1-t_i)\cdot\psi\left(\dfrac{t_1}{1-t_i},\ldots,\dfrac{t_{i-1}}{1-t_i},0,\dfrac{t_{i+1}}{1-t_i},\ldots,\dfrac{t_{n}}{1-t_i}\right)
\end{gather}	
for all $(t_1,\ldots,t_n)\in\Omega_n$ with $t_i<1$ 
$(i=1,\ldots,n)$.

\begin{remark}
The class $\pmb{\Psi}_2$ was first introduced by Bonsall and Duncan \cite[p.~37]{BonDun73} for the study of absolute norms on $\mathbb{C}^2$. 
Saito et al. \cite{SaiKatTak00} subsequently extended this construction to $\mathbb{C}^n$ $(n \ge 2)$. 
Recently, the class $\pmb{\Psi}_n$ has been employed to construct general norms on a product of vector spaces~\cite{Cuo25}.
\end{remark}

The following metric construction was recently established in \cite{HieTamCuo26}.
\begin{theorem}\label{T3.10}
Let $X_i$ be a nonempty set, $d_i:X_i\times X_i\to\R_+$ be a function 
$(i=1,\ldots,n)$, and $\psi\in\pmb{\Psi}_n$.
Define
\begin{equation}\label{T2.2-1}
d_\psi(x,y)
:= \begin{cases}
\left(\sum_{i=1}^n d_i(x_i,y_i)\right)\cdot\psi\left(\dfrac{d_1(x_1,y_1)}{\sum_{i=1}^nd_i(x_i,y_i)},\ldots,\dfrac{d_n(x_n,y_n)}{\sum_{i=1}^nd_i(x_i,y_i)}\right)  & \text{\rm if } x\ne y,\\
0 & \text{\rm if } x= y
\end{cases} 
\end{equation}
for all $x:=(x_1,\ldots,x_n), y:=(y_1,\ldots,y_n)\in \mathrm{P}:=X_1\times\cdots\times X_n$.
Then, $d_\psi$ is a metric on $\mathrm{P}$ if and only if $d_i$ is a metric on $X_i$ $(i=1,\ldots,n)$.
\end{theorem}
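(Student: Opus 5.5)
The plan is to reduce everything to a single auxiliary function on the nonnegative orthant. Given $\psi\in\pmb{\Psi}_n$, I would define $N_\psi:\R^n_+\to\R_+$ by
\[
N_\psi(a):=\Big(\sum_{i=1}^n a_i\Big)\,\psi\Big(\frac{a}{\sum_{i=1}^n a_i}\Big)\ \text{ for } a\ne 0, \qquad N_\psi(0):=0 .
\]
With this notation, $d_\psi(x,y)=N_\psi\big(d_1(x_1,y_1),\ldots,d_n(x_n,y_n)\big)$ whenever the sum of the coordinate distances is positive. I would then establish three properties of $N_\psi$.
\begin{enumerate}[\rm (a)]
\item $N_\psi$ is positively homogeneous. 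This is immediate from the definition.
\item $N_\psi$ is convex on $\R^n_+$. This is the standard perspective-function argument. For $a,b\ne0$, write $\lambda a+(1-\lambda)b$ and express its normalization $\frac{\lambda a+(1-\lambda)b}{\lambda\sum a_i+(1-\lambda)\sum b_i}$ as a convex combination of $\frac{a}{\sum a_i}$ and $\frac{b}{\sum b_i}$, with weights proportional to $\lambda\sum a_i$ and $(1-\lambda)\sum b_i$. Convexity of $\psi$ then gives the claim. Together with (a), this yields subadditivity of $N_\psi$.
\item $N_\psi$ is monotone in each coordinate. This is the key step.
\end{enumerate}

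For (c), first read condition \eqref{I} as saying that $N_\psi(a)\ge N_\psi(a^{(i)})$, where $a^{(i)}$ is $a$ with its $i$-th coordinate replaced by $0$; this follows from positive homogeneity after scaling. Now let $a'$ be $a$ with $a_i$ replaced by $s a_i$ for some $s\in[0,1]$. Then $a'=s a+(1-s)a^{(i)}$, so by convexity and \eqref{I},
\[
N_\psi(a')\le sN_\psi(a)+(1-s)N_\psi(a^{(i)})\le N_\psi(a).
\]
Hence $N_\psi$ is nondecreasing in each coordinate. A consequence is that $N_\psi(a)\ge N_\psi(a_i\mathbf{e}_i)=a_i\psi(\mathbf{e}_i)=a_i$ by \eqref{b}. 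Thus $N_\psi(a)\ge\max_i a_i$, and in particular $N_\psi(a)>0$ for $a\ne0$.

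Sufficiency. Assume each $d_i$ is a metric. Then $d_\psi(x,y)$ is well defined, since $x\ne y$ forces some $d_i(x_i,y_i)>0$. Positivity for $x\ne y$ follows from $N_\psi\ge\max_i a_i$, and symmetry is inherited from the $d_i$. For the triangle inequality, the componentwise bound $d_i(x_i,z_i)\le d_i(x_i,y_i)+d_i(y_i,z_i)$, monotonicity (c), and subadditivity give
\[
d_\psi(x,z)\le N_\psi\big(d(x,y)+d(y,z)\big)\le d_\psi(x,y)+d_\psi(y,z),
\]
where $d(x,y)$ denotes the vector of coordinate distances. The degenerate cases in which some point pairs coincide are trivial.

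Necessity. Assume $d_\psi$ is a metric, fix $i$, and fix the other coordinates $x_j=y_j$ for $j\ne i$. One must first check that $d_j(x_j,x_j)=0$. This follows by taking points that differ in at most one coordinate and using that the formula must make sense and give $0$ only when $x=y$. Once this holds, for $x_i\ne y_i$ we get $d_\psi(x,y)=d_i(x_i,y_i)\,\psi(\mathbf{e}_i)=d_i(x_i,y_i)$. So $d_i$ is the restriction of $d_\psi$ to a copy of $X_i$ and inherits all metric axioms.

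I expect the main obstacle to be the coordinatewise monotonicity (c), which is where \eqref{I} is used in an essential way. A secondary nuisance is the bookkeeping in the necessity direction when $\sum_i d_i(x_i,y_i)=0$ but $x\ne y$, which I would handle by observing that the formula is then undefined, so the assumption that $d_\psi$ is a metric forces this case not to occur.
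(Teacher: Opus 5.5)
Your sufficiency argument is correct, and it uses the same mechanism the paper relies on. The paper quotes this theorem from \cite{HieTamCuo26} without proof. However, its Lemma~\ref{L4.7}\eqref{L4.7-2} is exactly your coordinatewise monotonicity of $N_\psi$. Likewise, the computation in the proof of Corollary~\ref{C4.5} (monotonicity, then convexity of $\psi$ with weights $\frac{\al}{\al+\be},\frac{\be}{\al+\be}$) is your perspective-function subadditivity.

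The gap is in the necessity direction, at the step you treat as bookkeeping. The claim that $d_j(x_j,x_j)=0$ is forced does not follow. Pairs with $x=y$ never consult the $d_j$, because $d_\psi(x,x)=0$ by definition. Pairs with $x\ne y$ only require $\sum_k d_k(x_k,y_k)>0$ and then produce a positive value; a positive diagonal entry $d_j(x_j,x_j)$ only makes that easier.

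In fact the ``only if'' part cannot be proved as stated. Take $n=2$, $\psi=\psi_1$ (so $\psi\equiv 1$ on $\Omega_2$), $X_1=X_2=\R$, $d_1(u,v)=|u-v|$ and $d_2(u,v)=|u-v|+1$ for all $u,v$. Then $d_\psi(x,y)=|x_1-y_1|+|x_2-y_2|+1$ for $x\ne y$ and $0$ for $x=y$. This is a metric on $\R^2$ (a metric plus $1$ off the diagonal), yet $d_2(u,u)=1$, so $d_2$ is not a metric.

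Symmetry can fail to be inherited too. Take $\psi=\psi_\infty$, $X_2=\{w\}$ with $d_2(w,w)=3$, and $X_1=\{a,b\}$ with $d_1(a,a)=d_1(b,b)=0$, $d_1(a,b)=1$, $d_1(b,a)=2$. Both distances between the two points of $\mathrm{P}$ equal $3$, so $d_\psi$ is a metric while $d_1$ is not symmetric.

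So the converse needs an extra hypothesis, for example $d_j(u,u)=0$ for all $u\in X_j$ and all $j$. Under it your slice argument works verbatim. For $x,y$ differing only in the $i$-th coordinate, well-definedness gives $d_i(x_i,y_i)>0$, and \eqref{b} gives $d_\psi(x,y)=d_i(x_i,y_i)$. Hence $d_i$ is isometric to a slice of $(\mathrm{P},d_\psi)$ and inherits all the metric axioms.
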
	

\begin{remark}
For a scalar $p\in[1,\infty]$, the function
\begin{gather}\label{ppsi}
\psi_{p}(t)
:=\begin{cases}
\left(t_1^p+\cdots+t_n^p\right)^{\frac{1}{p}}  & \text{if } p\in[1,\infty),\\
\max\{t_1,\ldots,t_n\} & \text{if } p=\infty
\end{cases} 
\end{gather}
for all $t:=(t_1,\ldots,t_n)$ belongs to $\pmb{\Psi}_n$; see \cite{Cuo26} for a detailed proof.			
Suppose that $d_i$ is a metric on $X_i$ $(i=1,\ldots,n)$.
By \eqref{T2.2-1}, the function
\begin{gather}\label{p-metric}
d_{\psi_p}(x,y)
= \begin{cases}
(d^p_1(x_1,y_1)+\cdots+d^p_n(x_n,y_n))^{\frac{1}{p}} & \text{if } p\in[1,\infty),\\
\max\{d_1(x_1,y_1),\ldots,d_n(x_n,y_n)\} & \text{if } p=\infty
\end{cases} 
\end{gather}	
for all $x:=(x_1,\ldots,x_n), y:=(y_1,\ldots,y_n)\in \mathrm{P}$ is a metric on $\mathrm{P}$.
Further examples of elements of $\pmb{\Psi}_n$ can be found in \cite{SaiKatTak00,SaiKatTak00+,TanOhwSai14} and the references therein.
\end{remark}	

The following result is needed for the subsequent analysis. 
It is a consequence of \eqref{I}. 
The reader is referred to \cite{Cuo25} for a detailed proof.
\begin{lemma}\label{L4.7}
Let  $\psi\in\pmb{\Psi}_n$.
The following assertions hold:
\begin{enumerate}[\rm (i)]
\item\label{L4.7-2}
if $0\le \al_i\le\be_i$ $(i=1,\ldots,n)$ with $\al:=\sum_{i=1}^{n}\al_i>0$ and  $\be:=\sum_{i=1}^{n}\be_i>0$, then
\begin{gather*}
\al\psi\left(\dfrac{\al_1}{\al},\ldots,\dfrac{\al_{n}}{\al}\right)\le \be \psi\left(\dfrac{\be_1}{\be},\ldots,\dfrac{\be_{n}}{\be}\right);
\end{gather*}
\item\label{L4.7-1}
$\max\{t_1,\ldots,t_n\}\le\psi(t)\le 1$ for all $t:=(t_1,\ldots,t_n)\in\Omega_n$.
\end{enumerate}	
\end{lemma}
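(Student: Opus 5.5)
Both parts will come from the basic properties of $\psi$ on $\Omega_n$: continuity, convexity, the normalization \eqref{b}, and the inequality \eqref{I}.

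\textbf{Part (ii).} I would prove this first, since part (i) uses it. The upper bound $\psi\le 1$ follows from convexity and \eqref{b}. Write $t=\sum_i t_i\mathbf{e}_i$ with $t\in\Omega_n$; then
\[
\psi(t)\le\sum_i t_i\psi(\mathbf{e}_i)=1 .
\]
For the lower bound $\psi(t)\ge t_j$ for each $j$, I would use induction on the number of nonzero coordinates, iterating \eqref{I}. Remove one coordinate $i\ne j$ with $0<t_i<1$. By \eqref{I},
\[
\psi(t)\ge(1-t_i)\,\psi(t'),
\]
where $t'$ is $t$ with the $i$th entry deleted and the rest rescaled by $(1-t_i)^{-1}$. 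By induction $\psi(t')\ge t_j/(1-t_i)$, hence $\psi(t)\ge t_j$. The base case is a vertex $\mathbf{e}_j$, where $\psi=1\ge t_j$. If $t_j=1$ there is nothing to prove. Thus
\[
\max_j t_j\le\psi(t)\le 1 .
\]

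\textbf{Part (i): reduction.} Define $g(\gamma):=\|\gamma\|_1\,\psi(\gamma/\|\gamma\|_1)$ on $\R_+^n\setminus\{0\}$, the positively homogeneous extension of $\psi$. The claim says exactly that $g$ is monotone in each coordinate on $\R^n_+$. It therefore suffices to raise one coordinate at a time: if $\alpha$ and $\beta$ differ only in coordinate $i$, with $\alpha_i<\beta_i$, then $g(\alpha)\le g(\beta)$. Chaining at most $n$ such steps gives the general case. If an intermediate vector is $0$ we need $\alpha=0$, which is excluded, so the chain stays in the domain.

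\textbf{Part (i): convexity of $g$.} $g$ is convex on the cone, as the perspective of a convex function. Concretely, for $\gamma,\delta$ in the cone with $s=\|\gamma\|_1$ and $r=\|\delta\|_1$,
\[
g(\gamma+\delta)=(s+r)\,\psi\!\left(\tfrac{s}{s+r}\tfrac{\gamma}{s}+\tfrac{r}{s+r}\tfrac{\delta}{r}\right)\le g(\gamma)+g(\delta),
\]
and $g$ is positively homogeneous.

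\textbf{Part (i): one-coordinate step.} Fix the other coordinates and let $h(u):=g(\alpha+u\mathbf{e}_i)$ for $u\ge-\alpha_i$. Then $h$ is convex in $u$, and it suffices to show $h$ is minimized at $u=-\alpha_i$. A convex function minimized at the left endpoint is nondecreasing on the whole half-line, so $h(0)\le h(\beta_i-\alpha_i)$.

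The minimization is exactly \eqref{I} after scaling. Let $S=\sum_{k\ne i}\alpha_k$. Applying \eqref{I} to the normalized point, with $t_i=\gamma_i/(S+\gamma_i)$, gives
\[
g(\gamma)\ge S\,\psi(\text{the zeroed-and-rescaled point})=g(\gamma-\gamma_i\mathbf{e}_i)=h(-\alpha_i).
\]
This needs $S>0$. If $S=0$, the claim reads $\alpha_i\psi(\mathbf{e}_i)\le\beta_i\psi(\mathbf{e}_i)$, which is trivial.

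\textbf{Main obstacle.} The main obstacle is this degenerate handling and the domain of $h$ near $u=-\alpha_i$ when $S=0$. The case split above removes it. Everything else is routine perspective-function bookkeeping.
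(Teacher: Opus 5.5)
Your proof is correct: rescaling \eqref{I} shows that the homogeneous extension $g$ cannot increase when a coordinate is set to zero, and convexity of $g$ along each coordinate line then gives the monotonicity in (i), while iterating \eqref{I} gives the lower bound in (ii). Two small remarks: your proof of (i) never actually uses (ii), and the lower bound in (ii) also follows at once from (i) applied to $t_j\mathbf{e}_j\le t$ when $t_j>0$. The paper gives no proof of its own; it only remarks that the lemma is a consequence of \eqref{I} and defers the details to \cite{Cuo25}, and your argument (\eqref{I} together with convexity of $\psi$) fits that indication.
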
	

From now on, we assume that $d_i$ is a metric on a vector space $X_i$ $(i=1,\ldots,n)$, $d_\psi$ is given by \eqref{T2.2-1} for some $\psi\in \pmb{\Psi}_n$,
$f_i(u)
:=d_i(u,0_{X_i})$ for all $u\in X_i$ $(i=1,\ldots,n)$, and 
$f_\psi(x):=d_\psi(x,0_{\mathrm{P}})$ for all $x\in \mathrm{P}:=X_1\times\cdots\times X_n$.
Under these setting, we have
\begin{gather}\label{C4.5-3}
f_\psi(x)
= \begin{cases}
\left(\sum_{i=1}^n f_i(x_i)\right)\cdot\psi\left(\dfrac{f_1(x_1)}{\sum_{i=1}^nf_i(x_i)},\ldots,\dfrac{f_n(x_n)}{\sum_{i=1}^nf_i(x_i)}\right)  & \text{\rm if } x\ne 0_\mathrm{P},\\
0 & \text{\rm if } x=0_\mathrm{P}
\end{cases} 
\end{gather}	
for all $x:=(x_1,\ldots,x_n)\in \mathrm{P}$.

\subsection{The constant of general metrics}

The next statement is a product  version of Theorem~\ref{T2.5}.
\begin{corollary}\label{C4.5}
Let $\sigma\in[1,\infty)$ and $\psi\in \pmb{\Psi}_n$.
The following assertions hold true:
\begin{enumerate}[\rm (i)]
\item\label{C4.5-0}
$C_{\text{\rm NJ}}^{(\sigma)}(d_\psi)\ge 2^{2-\sigma}$;
\item\label{C4.5-1}
if $f_1,\ldots,f_n$ are 2-homogeneous, then $C_{\text{\rm NJ}}^{(\sigma)}(d_\psi)\ge 1$;
\item\label{C4.5-2}
if $f_1,\ldots,f_n$ are subadditive and even, then
$C_{\rm{NJ}}^{(\sigma)}(d_\psi)\le 2$.
\end{enumerate}		
\end{corollary}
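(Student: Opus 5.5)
The plan is to apply Theorem~\ref{T2.5} directly to the metric $d_\psi$ on $\mathrm{P}$, whose associated function is $f_\psi$ given by \eqref{C4.5-3}. Assertion \eqref{C4.5-0} needs no hypothesis: Theorem~\ref{T2.5}\eqref{T2.5-1} holds for any metric, and $d_\psi$ is a metric by Theorem~\ref{T3.10}. For \eqref{C4.5-1} and \eqref{C4.5-2}, it suffices to show that $f_\psi$ inherits the required properties from $f_1,\ldots,f_n$. The first is 2-homogeneity; the second is subadditivity together with evenness.

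For \eqref{C4.5-1}, suppose each $f_i$ is 2-homogeneous, and let $x\ne 0_\mathrm{P}$. Then $f_i(2x_i)=2f_i(x_i)$ for every $i$. Hence $\sum_i f_i(2x_i)=2\sum_i f_i(x_i)>0$, and the argument of $\psi$ in \eqref{C4.5-3} is unchanged. Therefore $f_\psi(2x)=2f_\psi(x)$, and this holds trivially for $x=0_\mathrm{P}$. Theorem~\ref{T2.5}\eqref{T2.5-2} then gives the bound.

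For \eqref{C4.5-2}, evenness is immediate by the same substitution, since $f_i(-x_i)=f_i(x_i)$. Subadditivity is the only step with content. I would write $F(a_1,\ldots,a_n):=(\sum a_i)\,\psi(a_1/\sum a_i,\ldots,a_n/\sum a_i)$ for $a\in\R^n_+\setminus\{0\}$, with $F(0):=0$. This makes $f_\psi(x)=F(f_1(x_1),\ldots,f_n(x_n))$.

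Two facts about $F$ are needed. The first is monotonicity in each coordinate, which is exactly Lemma~\ref{L4.7}\eqref{L4.7-2}. The second is subadditivity $F(a+b)\le F(a)+F(b)$. This follows from the convexity of $\psi$: with $\alpha=\sum a_i$ and $\beta=\sum b_i$, both positive, one writes
$\frac{a+b}{\alpha+\beta}=\frac{\alpha}{\alpha+\beta}\frac{a}{\alpha}+\frac{\beta}{\alpha+\beta}\frac{b}{\beta}$,
so that $(\alpha+\beta)\psi\big(\tfrac{a+b}{\alpha+\beta}\big)\le \alpha\psi(a/\alpha)+\beta\psi(b/\beta)$. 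The cases $\alpha=0$ or $\beta=0$ are trivial.

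Combining the two facts gives
$f_\psi(x+y)=F(f_i(x_i+y_i))\le F(f_i(x_i)+f_i(y_i))\le f_\psi(x)+f_\psi(y)$.
Here monotonicity is applied with $\alpha_i=f_i(x_i+y_i)\le\beta_i=f_i(x_i)+f_i(y_i)$, and the degenerate case where all $\alpha_i=0$ is trivial. Theorem~\ref{T2.5}\eqref{T2.5-3} then finishes the proof.

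I expect the subadditivity of $f_\psi$ to be the main, though mild, obstacle. Its difficulty lies in handling the positive-homogeneous extension $F$ of $\psi$ carefully, including the zero cases. An alternative route is to note that this property is essentially the triangle-inequality argument behind Theorem~\ref{T3.10}, and to cite it from there.
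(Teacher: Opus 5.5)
Your proposal is correct and matches the paper's proof. Both apply Theorem~\ref{T2.5} to $d_\psi$, pass 2-homogeneity and evenness directly through \eqref{C4.5-3}, and get subadditivity of $f_\psi$ in two steps: first the monotonicity of Lemma~\ref{L4.7}\eqref{L4.7-2} with $f_i(x_i+y_i)\le f_i(x_i)+f_i(y_i)$, then the convexity of $\psi$. You also handle the degenerate zero cases explicitly, which the paper leaves implicit.
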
	

\begin{proof}
Assertion \eqref{C4.5-0}  is a consequence of Theorem~\ref{T2.5}\eqref{T2.5-1}.	
In view of \eqref{C4.5-3}, if $f_1,\ldots,f_n$ are 2-homogeneous and even, then $f_\psi$ is also  2-homogeneous and even, respectively.
Suppose that $f_1,\ldots,f_n$ are subadditive.
Let $x:=(x_1,\ldots,x_n),y:=(y_1,\ldots,y_n)\in \mathrm{P}$.	
If $x+y= 0_\mathrm{P}$, then the inequality $f_\psi(x+y)\le f_\psi(x)+	f_\psi(y)$ is trivially satisfied.
Let $x+y\ne 0_\mathrm{P}$.
Define 
\begin{gather*}%\label{P4.4-2}
\al_i:=f_i(x_i),\;\;\be_i:=f_i(y_i)\;\;(i=1,\ldots,n),\;\;
\al:=\sum_{i=1}^{n}\al_i,\;\;\be:=\sum_{i=1}^{n}\be_i.
\end{gather*}		
Note that  $f_i(x_i+y_i)\le\al_i+\be_i$ $(i=1,\ldots,n)$.
By Lemma~\ref{L4.7}\eqref{L4.7-2},
\begin{eqnarray*}
f_\psi(x+y)	
&\overset{\eqref{C4.5-3}}{=}&\left(\sum_{i=1}^{n}f_i(x_i+y_i)\right)\cdot\psi\left(\dfrac{f_1(x_1+y_1)}{\sum_{i=1}^{n}f_i(x_i+y_i)},\ldots,\dfrac{f_n(x_n+y_n)}{\sum_{i=1}^{n}f_i(x_i+y_i)}\right)\\
&\le& (\al+\be)\psi\left(\dfrac{ \al_1+\be_1}{\al+\be},\ldots,\dfrac{\al_n+\be_n}{\al+\be}\right)\\
&=&(\al+\be)\psi\left(\dfrac{\al}{\al+\be}\cdot \left(\frac{\al_1}{\al},\ldots,\frac{\al_n}{\al}\right)+  \dfrac{\be}{\al+\be}\cdot \left(\frac{\be_1}{\be},\ldots,\frac{\be_n}{\be}\right)\right)\\
&\le& \al \psi\left(\frac{\al_1}{\al},\ldots,\frac{\al_n}{\al}\right)+ \be \psi\left(\frac{\be_1}{\be},\ldots,\frac{\be_n}{\be}\right)\;\;\;\;(\psi\;\; \text{is convex})\\
&\overset{\eqref{C4.5-3}}{=}& f_\psi(x)+ f_\psi(y).
\end{eqnarray*}	
Thus, $f_\psi$ is subadditive.
Therefore, assertions \eqref{C4.5-1} and \eqref{C4.5-2} are  consequences of
Theorem~\ref{T2.5}\eqref{T2.5-2} and \eqref{T2.5-3}, respectively.
\end{proof}	

The next theorem establishes quantitative relations between the constants of metrics generated by elements of $\pmb{\Psi}_n$.
\begin{theorem}\label{T4.7}
Let $\psi,\phi\in \pmb{\Psi}_n$, and $\sigma\in [1,\infty)$.
Then
\begin{gather*}
\left(\frac{\mathrm{m}_n}{\mathrm{M}_n}\right)^\sigma\cdot C_{\text{\rm NJ}}^{(\sigma)}(d_\phi) \le C_{\text{\rm NJ}}^{(\sigma)}(d_\psi)\le \left(\frac{\mathrm{M}_n}{\mathrm{m}_n}\right)^\sigma\cdot C_{\text{\rm NJ}}^{(\sigma)}(d_\phi),
\end{gather*}
where \begin{gather}\label{mM}
\mathrm{m}_n:=\min_{t\in\Omega_n}\dfrac{\psi(t)}{\phi(t)}
\;\;\text{and}\;\;
\mathrm{M}_n:=\max_{t\in\Omega_n}\dfrac{\psi(t)}{\phi(t)}.
\end{gather}	
\end{theorem}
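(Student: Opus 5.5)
The plan is a pointwise comparison of $f_\psi$ and $f_\phi$, which is then transferred to the quotient $\mathcal{G}$. First I would note that $\mathrm{m}_n$ and $\mathrm{M}_n$ are well defined and positive. Both $\psi$ and $\phi$ are continuous on the compact set $\Omega_n$, and by Lemma~\ref{L4.7}\eqref{L4.7-1} they satisfy $\psi(t),\phi(t)\ge\max_i t_i\ge \frac1n>0$ on $\Omega_n$. Hence the ratio $\psi/\phi$ is continuous and strictly positive, so its minimum $\mathrm{m}_n>0$ and maximum $\mathrm{M}_n<\infty$ are attained.

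Next I would establish the two-sided estimate
\begin{gather*}
\mathrm{m}_n f_\phi(x)\le f_\psi(x)\le \mathrm{M}_n f_\phi(x)\;\;\text{for all}\;\;x\in\mathrm{P}.
\end{gather*}
For $x=0_\mathrm{P}$ both sides vanish. For $x\ne 0_\mathrm{P}$, let $s:=\sum_i f_i(x_i)>0$ and $t:=(f_1(x_1)/s,\ldots,f_n(x_n)/s)\in\Omega_n$. By \eqref{C4.5-3}, $f_\psi(x)=s\psi(t)$ and $f_\phi(x)=s\phi(t)$. The inequality then follows from $\mathrm{m}_n\phi(t)\le\psi(t)\le\mathrm{M}_n\phi(t)$.

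Then I would take any $(x,y)\ne(0_\mathrm{P},0_\mathrm{P})$. The numerator of $\mathcal{G}^{(\sigma)}_{d_\psi}(x,y)$ is at most $\mathrm{M}_n^\sigma\bigl(f_\phi^\sigma(x+y)+f_\phi^\sigma(x-y)\bigr)$. The denominator is at least $2^{\sigma-1}\mathrm{m}_n^\sigma\bigl(f_\phi^\sigma(x)+f_\phi^\sigma(y)\bigr)$, which is positive because $f_\phi$ is a metric distance to the origin and $(x,y)\neq(0,0)$. Therefore
\begin{gather*}
\mathcal{G}^{(\sigma)}_{d_\psi}(x,y)\le\left(\frac{\mathrm{M}_n}{\mathrm{m}_n}\right)^\sigma\mathcal{G}^{(\sigma)}_{d_\phi}(x,y)\le\left(\frac{\mathrm{M}_n}{\mathrm{m}_n}\right)^\sigma C_{\rm NJ}^{(\sigma)}(d_\phi).
\end{gather*}
Taking the supremum over $(x,y)$ and using \eqref{C1} gives the right-hand inequality of the statement.

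For the left-hand inequality I would exchange the roles of $\psi$ and $\phi$. The min and max of $\phi/\psi$ are $1/\mathrm{M}_n$ and $1/\mathrm{m}_n$, so the same argument gives $C_{\rm NJ}^{(\sigma)}(d_\phi)\le(\mathrm{M}_n/\mathrm{m}_n)^\sigma C_{\rm NJ}^{(\sigma)}(d_\psi)$, which rearranges to the claimed bound. The only delicate point I expect is the positivity of $\mathrm{m}_n$, which Lemma~\ref{L4.7}\eqref{L4.7-1} settles. If $C_{\rm NJ}^{(\sigma)}(d_\phi)=\infty$, I would read both inequalities in the extended-real sense.
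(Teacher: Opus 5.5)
Your proposal is correct and is essentially the paper's proof: the pointwise sandwich $\mathrm{m}_n f_\phi(x)\le f_\psi(x)\le \mathrm{M}_n f_\phi(x)$ obtained from \eqref{C4.5-3} is transferred to the quotient $\mathcal{G}^{(\sigma)}$, and the other inequality follows by exchanging the roles of $\psi$ and $\phi$. The only differences are cosmetic. The paper bounds $\mathcal{G}^{(\sigma)}_{d_\phi}(x,y)$ by passing through $C_{\text{\rm NJ}}^{(\sigma)}(d_\psi)$, whereas you compare $\mathcal{G}^{(\sigma)}_{d_\psi}(x,y)$ with $\mathcal{G}^{(\sigma)}_{d_\phi}(x,y)$ pointwise, and you make the positivity of $\mathrm{m}_n$ explicit, which the paper only notes in a remark afterwards.
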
	

\begin{proof}
Let $x:=(x_1,\ldots,x_n)\in \mathrm{P}$ with $x\ne 0_\mathrm{P}$.
By \eqref{C4.5-3} and \eqref{mM},
\begin{align*}
f_\psi(x)
&= \left(\sum_{i=1}^{n}f_i(x_i)\right)\cdot\psi
\left(\dfrac{f_1(x_1)}{\sum_{i=1}^{n}f_i(x_i)},\ldots,\dfrac{f_n(x_n)}{\sum_{i=1}^{n}f_i(x_i)}\right)\\
&\le\mathrm{M}_n\cdot\left(\sum_{i=1}^{n}f_i(x_i)\right)\cdot\phi
\left(\dfrac{f_1(x_1)}{\sum_{i=1}^{n}f_i(x_i)},\ldots,\dfrac{f_n(x_n)}{\sum_{i=1}^{n}f_i(x_i)}\right)
=\mathrm{M}_n\cdot f_\phi(x).
\end{align*}	
Similarly, $f_\psi(x)\ge \mathrm{m}_n\cdot f_\phi(x)$.
Thus,
\begin{gather}\label{T2.4-1}
\mathrm{m}_n\cdot f_\phi(x)\le f_{\psi}(x)\le \mathrm{M}_n\cdot f_\phi(x)\;\;\text{for all}\;\;x\in \mathrm{P}.
\end{gather}	
Let $x,y\in \mathrm{P}$ with $(x,y)\ne (0_{\mathrm{P}},0_{\mathrm{P}})$.
Then,
\begin{eqnarray*}
f^\sigma_\phi(x+y)+f^\sigma_\phi(x-y)
&\overset{\eqref{T2.4-1}}{\le}& \frac{1}{\mathrm{m}_n^\sigma}\cdot\left(f^\sigma_\psi(x+y)+f^\sigma_\psi(x-y)\right)\\
&\overset{\eqref{C1}}{=}& \frac{1}{\mathrm{m}_n^\sigma}\cdot C_{\text{\rm NJ}}^{(\sigma)}(d_\psi)\cdot 2^{\sigma-1}\left(f^\sigma_\psi(x)+f^\sigma_\psi(y)\right)\\
&\overset{\eqref{T2.4-1}}{\le}& \frac{1}{\mathrm{m}_n^\sigma}\cdot \mathrm{M}_n^\sigma\cdot C_{\text{\rm NJ}}^{(\sigma)}(d_\psi)\cdot 2^{\sigma-1} \left(f^\sigma_\phi(x)+f^\sigma_\phi(y)\right),
\end{eqnarray*}	
and consequently,
\begin{gather*}
\mathcal{G}^{(\sigma)}_{d_\phi}(x,y)\overset{\eqref{G}}{=}\dfrac{f^\sigma_\phi(x+y)+f^\sigma_\phi(x-y)}{2^{\sigma-1}(f^\sigma_\phi(x)+f^\sigma_\phi(y))}\le
\left(\frac{\mathrm{M}_n}{\mathrm{m}_n}\right)^\sigma\cdot
C_{\text{\rm NJ}}^{(\sigma)}(d_\psi).
\end{gather*}
In view of this and \eqref{C1},  the second inequality is satisfied.
The proof for the first inequality is analogous.
\end{proof}

\begin{remark}
\begin{enumerate}[\rm (i)]
\item	
By Lemma~\ref{L4.7}\eqref{L4.7-1}, every function in $\pmb{\Psi}_n$ takes strictly positive values.
For any pair $\psi,\phi \in \pmb{\Psi}_n$, the continuity property ensures that the function $\frac{\psi}{\phi}$ attains its minimum and maximum on the compact set $\Omega_n\subset\R^n$.
\item
Normed versions of Theorem~\ref{T4.7} for the case $\sigma=2$ can be found in 
\cite{Cuo25},  \cite[Theorem~1]{SaiKatTak00}, and \cite[Theorem~5.2]{SaiKatTak00+}.
\end{enumerate}
\end{remark}

When $n=2$, the estimates in Theorem~\ref{T4.7} can be improved under  additional assumptions.
\begin{corollary}\label{C3.4}
Let $\psi,\phi\in \pmb{\Psi}_2$, $\sigma\in[1,\infty)$, $\mathrm{m}_2$ and $\mathrm{M}_2$ be given by \eqref{mM}.
Suppose that $C_{\text{\rm NJ}}^{(\sigma)}(d_\phi)=2^{1-\frac{\sigma}{2}}$, $f_1$ and $f_2$ are positively homogeneous, and one of them is even.
The following assertions hold:
\begin{enumerate}[\rm (i)]
\item\label{C3.4-1}
if $\psi(t)\ge\phi(t)$ for all $t\in\Omega_2$, then $C_{\text{\rm NJ}}^{(\sigma)}(d_\psi)= \mathrm{M}_2^\sigma\cdot2^{1-\frac{\sigma}{2}}$;
\item\label{C3.4-2}
if $\psi(t)\le\phi(t)$ for all $t\in\Omega_2$, then $C_{\text{\rm NJ}}^{(\sigma)}(d_\psi)=\mathrm{m}_2^{-\sigma}\cdot 2^{1-\frac{\sigma}{2}}$.
\end{enumerate}	
\end{corollary}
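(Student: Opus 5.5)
The plan is to get the upper bound from Theorem~\ref{T4.7} and the matching lower bound by evaluating $\mathcal{G}^{(\sigma)}_{d_\psi}$ at one explicit pair built from a point of $\Omega_2$ where $\psi/\phi$ is extremal. Throughout I tacitly assume $X_1,X_2$ are nontrivial, so there exist $u\in X_1$, $v\in X_2$ with $f_1(u)=f_2(v)=1$; positive homogeneity gives such vectors from any nonzero vectors.

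\textbf{Upper bounds.} By \eqref{b}, $\psi(\mathbf{e}_1)=\phi(\mathbf{e}_1)=1$. In case \eqref{C3.4-1} this gives $\mathrm{m}_2=1$, since $\psi\ge\phi$ everywhere and the ratio equals $1$ at $\mathbf{e}_1$. In case \eqref{C3.4-2} it gives $\mathrm{M}_2=1$ in the same way. Theorem~\ref{T4.7} then yields $C_{\rm NJ}^{(\sigma)}(d_\psi)\le \mathrm{M}_2^\sigma 2^{1-\frac{\sigma}{2}}$ in case \eqref{C3.4-1} and $C_{\rm NJ}^{(\sigma)}(d_\psi)\le \mathrm{m}_2^{-\sigma} 2^{1-\frac{\sigma}{2}}$ in case \eqref{C3.4-2}.

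\textbf{Key identity for $\phi$.} Since $f_1,f_2$ are positively homogeneous, \eqref{C4.5-3} shows that $f_\phi$ and $f_\psi$ are positively homogeneous, and in particular $2$-homogeneous. Theorem~\ref{P2.2}\eqref{P2.2-2}, applied with the hypothesis $C_{\rm NJ}^{(\sigma)}(d_\phi)=2^{1-\frac{\sigma}{2}}$, gives the identity \eqref{P4.5-6} for $f_\phi$. Hence
\[
\mathcal{G}^{(\sigma)}_{d_\phi}(x,y)=2^{1-\frac{\sigma}{2}}\quad\text{for every }(x,y)\ne(0_{\mathrm P},0_{\mathrm P}).
\]
This exact equality, and not merely an inequality, is what makes the lower bounds sharp.

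\textbf{Lower bounds.} Choose $t_0=(1-s,s)\in\Omega_2$ at which $\psi/\phi$ attains $\mathrm{M}_2$ in case \eqref{C3.4-1}, or $\mathrm{m}_2$ in case \eqref{C3.4-2}. Suppose, say, $f_2$ is the even function; otherwise swap the roles of the coordinates.
\begin{itemize}
\item In case \eqref{C3.4-1}, take $x:=((1-s)u,0)$ and $y:=(0,sv)$. Then $x\pm y=((1-s)u,\pm sv)$, and by positive homogeneity and evenness of $f_2$, $f_\psi(x\pm y)=\psi(t_0)$. Also $f_\psi(x)=1-s$ and $f_\psi(y)=s$ by \eqref{b}. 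The same formulas hold with $\phi$ in place of $\psi$, so
\[
\mathcal{G}^{(\sigma)}_{d_\psi}(x,y)=\frac{\psi(t_0)^\sigma}{\phi(t_0)^\sigma}\,\mathcal{G}^{(\sigma)}_{d_\phi}(x,y)=\mathrm{M}_2^\sigma\, 2^{1-\frac{\sigma}{2}}.
\]
\item In case \eqref{C3.4-2}, take $x:=((1-s)u,sv)$ and $y:=((1-s)u,-sv)$. Then $x+y=(2(1-s)u,0)$, $x-y=(0,2sv)$, and $f_\psi(x)=f_\psi(y)=\psi(t_0)$, again using evenness of $f_2$. Both numerators coincide for $\psi$ and $\phi$, so
\[
\mathcal{G}^{(\sigma)}_{d_\psi}(x,y)=\frac{\phi(t_0)^\sigma}{\psi(t_0)^\sigma}\,\mathcal{G}^{(\sigma)}_{d_\phi}(x,y)=\mathrm{m}_2^{-\sigma}\, 2^{1-\frac{\sigma}{2}}.
\]
\end{itemize}
Combining each of these with the corresponding upper bound gives the two equalities.

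The main point needing care is placing the minus sign in the even coordinate, so that $f_\psi$ takes the same value at both vectors. The endpoint cases $s\in\{0,1\}$ also need a check. There $t_0=\mathbf{e}_1$ or $\mathbf{e}_2$, so the extremal ratio equals $1$, and the formulas above still hold; one only has to ensure that no vector in the constructed pair is zero in a way that breaks the computation.
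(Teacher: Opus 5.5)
Your proof is correct and follows the paper's route. The upper bound comes from Theorem~\ref{T4.7}, using $\mathrm{m}_2=1$ (resp.\ $\mathrm{M}_2=1$), which you get exactly from \eqref{b}; the identity \eqref{P4.5-6} for $f_\phi$ comes from Theorem~\ref{P2.2}\eqref{P2.2-2}; and the lower bound uses the test pair $((1-s)u,0_{X_2})$, $(0_{X_1},sv)$ with the sign placed on the even coordinate. The paper writes out only case \eqref{C3.4-1} and calls \eqref{C3.4-2} analogous, and your swapped pair (the same trick as in Proposition~\ref{T7.2}) is exactly what is needed there, since the unswapped pair would only give $\mathrm{m}_2^{\sigma}2^{1-\frac{\sigma}{2}}$.
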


\begin{proof}	
We now prove assertion \eqref{C3.4-1}.
Suppose that  $\psi(t)\ge\phi(t)$ for all $t\in\Omega_2$.
Then $\mathrm{m}_2\ge 1$.
We now show that $C_{\text{\rm NJ}}^{(\sigma)}(d_\psi)\ge \mathrm{M}_2^\sigma\cdot2^{1-\frac{\sigma}{2}}$.
Let $x_i\in X_i$ with $x_i\ne 0_{X_i}$ $(i=1,2)$.
Define $u_i:=\frac{\xi}{f_i(x_i)}\cdot x_i$ $(i=1,2)$ for some $\xi>0$.
By the homogeneity of $f_1$ and $f_2$,
\begin{gather*}
f_i(u_i)=f_i\left(\frac{\xi}{f_i(x_i)}\cdot x_i\right)=\frac{\xi}{f_i(x_i)}\cdot f_i(x_i)=\xi\;\;(i=1,2).
\end{gather*}	
Let $t:=(t_1,t_2)\in\Omega_2$ satisfy $\mathrm{M}_2=\frac{\psi( t)}{\phi(t)}$.
Note that $t_1,t_2\ge 0$ and $t_1+t_2=1$.
Set
 $x:=(t_1u_1,0_{X_2})$ and
$y:=(0_{X_1},t_2 u_2)$.
Without loss of generality, we can suppose that $f_2$ is even.
We have $f_1(t_1u_1)=t_1f(u_1)=t_1\xi$ and $f_2(t_2u_2)=f_2(-t_2u_2)=t_2f(u_2)=t_2\xi$. 
By \eqref{b} and \eqref{C4.5-3},
\begin{gather}\label{C3.4-3}
f_\psi(x)=f_1( t_1 u_1)\cdot \psi\left(1,0 \right)=t_1\xi,\;\;
f_\psi(y)=f_2(t_2 u_2)\cdot \psi\left(0,1 \right)= t_2\xi,\\ 
\label{C3.4-4}
f_\psi(x+y)=f_\psi(x-y)=(t_1\xi+ t_2\xi)\cdot\psi\left(\frac{t_1\xi}{t_1\xi+t_2\xi},\frac{ t_2\xi}{t_1\xi+t_2\xi}\right)=
\xi \psi(t),
\end{gather}	
and similarly,
\begin{gather}\label{C3.4-8}
f_\phi(x)=t_1\xi,\;f_\phi(y)=t_2\xi,\;
f_\phi(x+y)=f_\phi(x-y)=\xi \phi(t).
\end{gather}	
By \eqref{C4.5-3}, $f_\phi$ are positively homogeneous.
By Theorem~\ref{P2.2}, 
\begin{gather}\label{C3.7-3}
f^\sigma_\phi(x+y)+f^\sigma_\phi(x-y)=2^{\frac{\sigma}{2}}\left(f^\sigma_\phi(x)+f^\sigma_\phi(y)\right).
\end{gather}	
By \eqref{C3.4-3}-\eqref{C3.7-3},
\begin{align*}
f^\sigma_\psi(x+y)+f^\sigma_\psi(x-y)
&=2\xi^\sigma\psi^\sigma(t)\\
&=2\xi^\sigma \mathrm{M}_2^\sigma\phi^\sigma(t)\\
&=\mathrm{M}_2^\sigma\left(f^\sigma_\phi(x+y)+f^\sigma_\phi(x-y)\right)\\
&=2^{\frac{\sigma}{2}}\mathrm{M}_2^\sigma\left(f^\sigma_\phi(x)+f^\sigma_\phi(y)\right)\\
&=2^{\frac{\sigma}{2}}\mathrm{M}_2^\sigma\left(f^\sigma_\psi(x)+f^\sigma_\psi(y)\right),
\end{align*}	
and consequently,
\begin{gather*}
\mathcal{G}^{(\sigma)}_{d_\psi}(x,y)\overset{\eqref{G}}{=} \frac{f^\sigma_\psi(x+y)+f^\sigma_\psi(x-y)}{2^{\sigma-1}(f^\sigma_\psi(x)+f^\sigma_\psi(y))}=\mathrm{M}_2^\sigma\cdot2^{1-\frac{\sigma}{2}}.
\end{gather*}
In view of \eqref{C1}, we have $C_{\text{\rm NJ}}^{(\sigma)}(d_\psi)\ge \mathrm{M}_2^\sigma\cdot2^{1-\frac{\sigma}{2}}$.
By Theorem~\ref{T4.7}, 
$C_{\text{\rm NJ}}^{(\sigma)}(d_\psi)\le \mathrm{M}_2^\sigma\cdot2^{1-\frac{\sigma}{2}}$.
This proves assertion \eqref{C3.4-1}.
The proof for assertion \eqref{C3.4-2} is analogous.
\end{proof}	

\begin{remark}
When $X_1=\cdots=X_n$ is a normed space and $\sigma=2$, Corollary~\ref{C3.4} recaptures the results  studied in \cite{Cuo25}, \cite[Theorem 1]{SaiKatTak00},  \cite[Remark~5.3]{SaiKatTak00+} and \cite[Theorems 2.3 \& 2.4]{Cao03}.
\end{remark}	

\begin{proposition}\label{P4.9}
Let  $\psi\in \pmb{\Psi}_n$ and  $\sigma\in[1,\infty)$.
If $C_{\text{\rm NJ}}^{(\sigma)}(d_\psi)=2^{1-\frac{\sigma}{2}}$ and $f_1,\ldots,f_n$ are 2-homogeneous, then $C_{\text{\rm NJ}}^{(\sigma)}(d_i)=2^{1-\frac{\sigma}{2}}$ $(i=1,\ldots,n)$.
\end{proposition}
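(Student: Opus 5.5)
The plan is to embed each factor $X_i$ into $\mathrm{P}$ via $u\mapsto \iota_i(u):=(0_{X_1},\ldots,0_{X_{i-1}},u,0_{X_{i+1}},\ldots,0_{X_n})$. Along this embedding, $f_\psi$ restricts exactly to $f_i$. The constant of $d_i$ is then a supremum over a subset of the pairs used for $d_\psi$, which gives the upper bound. For the lower bound, Theorem~\ref{P2.2}\eqref{P2.2-2} turns $C_{\text{NJ}}^{(\sigma)}(d_\psi)=2^{1-\frac{\sigma}{2}}$ into an exact metric-type parallelogram identity, and we restrict that identity to the $i$th factor.

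\textbf{Step 1 (restriction formula).} Fix $i$ and $u\in X_i$. If $u\ne 0_{X_i}$, then by \eqref{C4.5-3} the sum of the components is $f_i(u)$ and the argument of $\psi$ is $\mathbf{e}_i$. Hence, by \eqref{b},
\begin{gather*}
f_\psi(\iota_i(u))=f_i(u)\,\psi(\mathbf{e}_i)=f_i(u).
\end{gather*}
If $u=0_{X_i}$, both sides are $0$. Since $\iota_i$ is linear, $\iota_i(u\pm v)=\iota_i(u)\pm\iota_i(v)$. It follows that $\mathcal{G}^{(\sigma)}_{d_i}(u,v)=\mathcal{G}^{(\sigma)}_{d_\psi}(\iota_i(u),\iota_i(v))$ for all $(u,v)\ne(0_{X_i},0_{X_i})$. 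By \eqref{C1}, this gives
\begin{gather*}
C_{\text{NJ}}^{(\sigma)}(d_i)\le C_{\text{NJ}}^{(\sigma)}(d_\psi)=2^{1-\frac{\sigma}{2}}.
\end{gather*}

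\textbf{Step 2 (lower bound).} We first check that $f_\psi$ is 2-homogeneous, using \eqref{C4.5-3} and the 2-homogeneity of each $f_j$. For $x\ne 0_\mathrm{P}$, every coordinate of the argument of $\psi$ is unchanged when $x$ is replaced by $2x$, while the prefactor doubles. This observation is already made in the proof of Corollary~\ref{C4.5}. Theorem~\ref{P2.2}\eqref{P2.2-2} then yields
\begin{gather*}
f_\psi^\sigma(x+y)+f_\psi^\sigma(x-y)=2^{\frac{\sigma}{2}}\bigl(f_\psi^\sigma(x)+f_\psi^\sigma(y)\bigr)\;\;\text{for all}\;\;x,y\in\mathrm{P}.
\end{gather*}
Applying this with $x=\iota_i(u)$ and $y=\iota_i(v)$, and using Step 1, we see that $f_i$ satisfies condition \eqref{P4.5-6}. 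Theorem~\ref{P2.2}\eqref{P2.2-1} now gives $C_{\text{NJ}}^{(\sigma)}(d_i)=2^{1-\frac{\sigma}{2}}$ directly. Notice that this step alone proves the claim, so Step 1 serves only as a consistency check.

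\textbf{Possible obstacle.} The only delicate point is degenerate factors. If $X_i=\{0\}$, the supremum in \eqref{C1} is taken over the empty set. I would assume each $X_i$ is nontrivial, as is implicit throughout the section. Beyond that, the argument is routine. The main care goes into verifying the 2-homogeneity of $f_\psi$, so that Theorem~\ref{P2.2}\eqref{P2.2-2} applies, and the identity $\psi(\mathbf{e}_i)=1$, which makes the restriction exact.
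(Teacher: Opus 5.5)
Your proposal is correct and follows essentially the same route as the paper: you show $f_\psi$ is 2-homogeneous, invoke Theorem~\ref{P2.2}\eqref{P2.2-2} to get the exact identity \eqref{P4.5-6} for $f_\psi$, restrict it to the $i$th factor via $f_\psi(\iota_i(u))=f_i(u)$ (from \eqref{b} and \eqref{C4.5-3}), and conclude $C_{\text{NJ}}^{(\sigma)}(d_i)=2^{1-\frac{\sigma}{2}}$. The inequality $C_{\text{NJ}}^{(\sigma)}(d_i)\le C_{\text{NJ}}^{(\sigma)}(d_\psi)$ in Step~1 is redundant, since Step~2 needs only the restriction formula, and your nontriviality assumption on $X_i$ is one the paper also makes implicitly.
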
	

\begin{proof}
Suppose that $C_{\text{\rm NJ}}^{(\sigma)}(d_\psi)=2^{1-\frac{\sigma}{2}}$ and $f_1,\ldots,f_n$ are 2-homogeneous.
By \eqref{C4.5-3}, $f_\psi$ is  2-homogeneous.
Let $x,y\in X_1$.
Set $u:=(x,0_{X_2},\ldots,0_{X_n})$ and $v:=(y,0_{X_2},\ldots,0_{X_n})$. 
By Theorem~\ref{P2.2}\eqref{P2.2-2}, 
\begin{gather*}
f^\sigma_\psi(u+v)+f^\sigma_\psi(u-v)= 2^{\frac{\sigma}{2}}(f^\sigma_\psi(u)+f^\sigma_\psi(v)).
\end{gather*} 
By \eqref{b} and \eqref{C4.5-3},
\begin{gather*}
f_\psi(u)=f_1(x),\;\;f_\psi(v)=f_1(y),\;\;f_\psi(u+v)=f_1(x+y),\;\;f_\psi(u-v)=f_1(x-y).
\end{gather*}	
Thus, $f^\sigma_1(x+y)+f^\sigma_1(x-y)= 2^{\frac{\sigma}{2}}(f^\sigma_1(x)+f^\sigma_1(y))$.
By \eqref{G} and \eqref{C1}, we have $C_{\text{\rm NJ}}^{(\sigma)}(d_1)=2^{1-\frac{\sigma}{2}}$.
The proof for the other equalities are analogous.
\end{proof}

\begin{remark}
In view of Theorems~\ref{P2.2}\eqref{P2.2-2}, \ref{T3.14}\eqref{T3.14-2}	and Proposition~\ref{P4.9}, when $\sigma:=2$, the functions $f_1$ and $f_2$ in Corollary~\ref{C3.4} are norms on $X_1$ and $X_2$, respectively. 
\end{remark}	

\subsection{The constant of $p$-metrics}
In this section, we consider the particular case when $\psi:=\psi_p$ is given by \eqref{ppsi} with $p \in [1,\infty]$. 
The corresponding metric $d_{\psi_p}$ is defined by \eqref{p-metric}.
The $p$-version of \eqref{C4.5-3} has the form:
\begin{gather}\label{p}
f_{\psi_p}(x)
= \begin{cases}
(f^p_1(x_1)+\cdots+f^p_n(x_n))^{\frac{1}{p}} & \text{if } p\in[1,\infty),\\
\max\{f_1(x_1),\ldots,f_n(x_n)\} & \text{if } p=\infty
\end{cases} 
\end{gather}	
for all $x:=(x_1,\ldots,x_n)\in\mathrm{P}$.

The following statement provides an lower bound for the constant \eqref{C1} of $p$-metrics.

\begin{proposition}\label{T7.2}
Suppose that there exist distinct indices $i,j\in\{1,\ldots,n\}$ such that $f_i$ is positively homogeneous, $f_j$ is absolutely homogeneous, and $f_k$ is $2$-homogeneous for all $k\ne i,j$.
Then
\begin{gather}\label{T3.10-1}
C_{\text{\rm NJ}}^{(\sigma)}(d_{\psi_p})\ge
\begin{cases}	
2^{\frac{\sigma}{p}-\sigma+1}& \text{\rm if } p\in(1,2],\\
2^{1-\frac{\sigma}{p}}& \text{\rm if } p\in(2,\infty),\\
2& \text{\rm if } p\in\{1,\infty\}
\end{cases}
\end{gather}	
for all $\sigma\in [1,\infty)$.
\end{proposition}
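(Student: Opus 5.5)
The plan is to exhibit explicit test pairs $(x,y)\in\mathrm{P}\times\mathrm{P}$ supported only on the coordinates $i$ and $j$. The ratio $\mathcal{G}^{(\sigma)}_{d_{\psi_p}}(x,y)$ should then equal the claimed value, and \eqref{C1} gives the bound. The other coordinates $k\ne i,j$ are set to $0_{X_k}$, so their $2$-homogeneity is not needed for these particular pairs (it would matter only if one wanted to scale them). Fix nonzero $u\in X_i$, $w\in X_j$ and normalize by positive homogeneity: $u':=u/f_i(u)$ and $w':=w/f_j(w)$, so that $f_i(u')=f_j(w')=1$. Moreover $f_i(\lambda u')=\lambda$ and $f_j(\mu w')=|\mu|$ for $\lambda\ge0$ and $\mu\in\R$, where the absolute homogeneity of $f_j$ handles the signs that appear in $x-y$. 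By \eqref{p}, for vectors with only these two nonzero coordinates, $f_{\psi_p}$ is just the $\ell_p$-norm of the pair $(f_i,f_j)$ of coordinate values.

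\textbf{Case $p\in(2,\infty)$ and $p=\infty$.} Take $x:=u'\mathbf{}$ placed in coordinate $i$ and $y:=w'$ placed in coordinate $j$ (zeros elsewhere). Then $f_{\psi_p}(x)=f_{\psi_p}(y)=1$ and $f_{\psi_p}(x\pm y)=(1+1)^{1/p}=2^{1/p}$, using $f_j(-w')=1$. This gives $\mathcal{G}=\frac{2\cdot2^{\sigma/p}}{2^{\sigma-1}\cdot2}=2^{\sigma/p-\sigma+1}$. For $p=\infty$ the same pair gives $2^{1-\sigma}$, which is not enough, so the roles must be arranged differently. Take instead $x:=(u',w')$ and $y:=(u',-w')$ in coordinates $(i,j)$. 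Then $x+y=(2u',0)$ and $x-y=(0,2w')$, so $f_{\psi_p}(x\pm y)=2$ and $f_{\psi_p}(x)=f_{\psi_p}(y)=2^{1/p}$. This gives $\mathcal{G}=\frac{2\cdot2^\sigma}{2^{\sigma-1}\cdot2\cdot2^{\sigma/p}}=2^{1-\sigma/p}$. Here positive homogeneity of $f_i$ gives $f_i(2u')=2$ and $f_i(0)=0$, and absolute homogeneity gives $f_j(\pm w')=1$ and $f_j(2w')=2$. For $p=\infty$ this second pair gives $2$.

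\textbf{Case $p\in(1,2]$ and $p=1$.} The first pair (disjoint supports) gives $2^{\sigma/p-\sigma+1}$, which for $p=1$ equals $2$. So the first pair handles $p\in[1,2]$ and the second pair handles $p\in(2,\infty]$. Since the supremum dominates both pairs, the result is in fact the maximum of the two expressions. I will state the cases exactly as in \eqref{T3.10-1}, checking that the case split matches which expression is larger: $\sigma/p-\sigma+1\ge1-\sigma/p$ iff $2/p\ge1$ iff $p\le2$.

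The main obstacle is bookkeeping rather than any conceptual step. I need to verify that only the homogeneity actually assumed is used: in the second pair, the $i$-coordinate only ever sees nonnegative multiples of $u'$ ($u'$, $2u'$, $0$), while all sign changes are placed in coordinate $j$. This is exactly why the hypotheses are asymmetric. I also need to confirm that the pairs are not $(0_{\mathrm{P}},0_{\mathrm{P}})$, so they are admissible in \eqref{C1}.
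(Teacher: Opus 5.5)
Your proof is correct and uses the same two test pairs as the paper: the disjointly supported pair $x=(u',0)$, $y=(0,w')$ on coordinates $(i,j)$, and its image $(x+y,x-y)$. The paper obtains the second ratio from the identity $\mathcal{G}^{(\sigma)}_{d_{\psi_p}}(x+y,x-y)=2^{2-\sigma}/\mathcal{G}^{(\sigma)}_{d_{\psi_p}}(x,y)$, which is where it invokes $2$-homogeneity of $f_{\psi_p}$ (and hence of the $f_k$, $k\ne i,j$); your direct computation shows that this hypothesis is never actually needed, and your value $2^{1-\sigma}$ for the first pair at $p=\infty$ is the correct one (the paper's display lists $2$ there, though its conclusion is unaffected since the second pair gives $2$).
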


\begin{proof}
Without loss of generality, we can assume that $f_1$ is positively homogeneous, $f_2$ is absolutely homogeneous, and $f_3,\ldots,f_n$ are 2-homogeneous.
Using a similar argument as in the proof of Corollary~\ref{C3.4}, we can find  nonzero points
$u_1\in X_1$ and $u_2\in X_2$ such that $f_1(u_1)=f_2(u_2)=f_2(-u_2)=\xi$  $(i=1,\ldots,n)$ for some $\xi>0$.
Let $x:=(u_1,0_{X_2},\ldots,0_{X_n})$ and $y :=(0_{X_1},u_2,0_{X_3},\ldots,0_{X_{n}})$.
By \eqref{p}, 
\begin{gather}\label{T3.11-2}
f_{\psi_p}(x)=f_{\psi_p}(y)=\xi,\;\;	
f_{\psi_p}(x+y)=f_{\psi_p}(x-y)=
\begin{cases}	
2^{\frac{1}{p}}\xi& \text{\rm if } p\in[1,\infty),\\
\xi & \text{\rm if } p=\infty.
\end{cases}
\end{gather}
Then
\begin{gather}
\mathcal{G}^{(\sigma)}_{d_{\psi_p}}(x,y)\overset{\eqref{G}}{=}\dfrac{f_{\psi_p}^\sigma(x+y)+f_{\psi_p}^\sigma(x-y)}{2^{\sigma-1}\left(f_{\psi_p}^\sigma(x)+f_{\psi_p}^\sigma(y)\right)}
\overset{\eqref{T3.11-2}}{=}
\begin{cases}	
2^{\frac{\sigma}{p}-\sigma+1}& \text{\rm if } p\in[1,\infty),\\
2& \text{\rm if } p=\infty.
\end{cases}\label{T3.5-1}
\end{gather}
Note that $f_{\psi_p}$ is  2-homogeneous for all $p\in[1,\infty]$.
Then
\begin{eqnarray*}
\mathcal{G}^{(\sigma)}_{d_{\psi_p}}(x+y,x-y)
&\overset{\eqref{G}}{=}& \dfrac{f_{\psi_p}^\sigma(2x)+f_{\psi_p}^\sigma(2y)}{2^{\sigma-1}(f_{\psi_p}^\sigma(x+y)+f_{\psi_p}^\sigma(x-y))}\\	&=&\dfrac{2(f_{\psi_p}^\sigma(x)+f_{\psi_p}^\sigma(y))}{f_{\psi_p}^\sigma(x+y)+f_{\psi_p}^\sigma(x-y)}\\
&\overset{\eqref{G}}{=}&\dfrac{2^{2-\sigma}}{\mathcal{G}^{(\sigma)}_{d_{\psi_p}}(x,y)},
\end{eqnarray*}
and consequently,
\begin{gather}
\label{T3.5-2}
\mathcal{G}_{d_{\psi_p}}^{(\sigma)}(x+y,x- y)=\dfrac{2^{2-\sigma}}{\mathcal{G}^{(\sigma)}_{d_{\psi_p}}(x,y)}=\begin{cases}	
2^{1-\frac{\sigma}{p}}& \text{\rm if } p\in[1,\infty),\\
2& \text{\rm if } p=\infty.
\end{cases}
\end{gather}	
By \eqref{T3.5-1} and \eqref{T3.5-2}, $C_{\text{\rm NJ}}^{(\sigma)}(d_{\psi_p})\ge 2$ if $p\in\{1,\infty\}$,
and
\begin{gather*}
C_{\text{\rm NJ}}^{(\sigma)}(d_{\psi_p})\ge
\max\{2^{\frac{\sigma}{p}-\sigma+1},2^{1-\frac{\sigma}{p}}\}=
\begin{cases}	
2^{\frac{\sigma}{p}-\sigma+1}& \text{\rm if } p\in(1,2],\\
2^{1-\frac{\sigma}{p}}& \text{\rm if } p\in(2,\infty).
\end{cases}
\end{gather*}
This completes the proof.
\end{proof}	

To derive  exact estimates for the constant in Proposition~\ref{T7.2}, we need the following property.
\begin{definition}\label{D3.4}
Let $d$ be a metric on a vector space $X$, and $f$ be given by \eqref{f}.
\begin{enumerate}[\rm (i)]
\item\label{D3.4-1}
Let $\al\in(1,2]$ and $\be\in\R$ with $\frac{1}{\al}+\frac{1}{\be}=1$.
The metric  $d$ is said to satisfy the $(\al,\be)$-\textit{Clarkson inequality} if 
\begin{gather}\label{D4.6-1}
f^\be(x+y)+f^\be(x-y)\le 2\left(f^\al(x)+f^\al(y)\right)^{\frac{\be}{\al}}
\end{gather}
for all $x,y\in X$. 
\item\label{D3.4-2}
The metric  $d$ is said to satisfy the Clarkson inequality if it satisfies the 
 $(\al,\be)$-Clarkson inequality for any $\al\in(1,2]$ and $\be\in\R$ with $\frac{1}{\al}+\frac{1}{\be}=1$.
\end{enumerate}
\end{definition}	

\begin{remark}\label{R4.14}
\begin{enumerate}[\rm (i)]
\item\label{R4.14-1}
Let $p \in(1,\infty)$.
Consider the vector spaces $L_p([0,1])$ of all complex-valued measurable functions on $[0,1]$ and $\ell_p$ of all complex sequences equipped with the norms
\begin{gather}\label{n}
\|x\|_p := \left(\int_0^1 |x(t)|^p \, dt \right)^{1/p}
\;\;\text{and}\;\;
\|y\|_p := \left( \sum_{k=1}^\infty |y_k|^p \right)^{1/p}
\end{gather}	
for all $x\in L_p([0,1])$ and $y:=(y_k)_{k \in \mathbb{N}}\in \ell_p$, respectively.
In~\cite[Theorem~2]{Cla36}, Clarkson proved that these norms satisfy the $(p,q)$-Clarkson inequality for every $p \in (1,2]$ and $q \in \mathbb{R}$ such that $\frac{1}{p} + \frac{1}{q} = 1$.
This result plays a crucial role in proving the uniform convexity of these spaces \cite[Corollary, p. 403]{Cla36} and in computing the conventional von Neumann-Jordan constant of these norms~\cite[Theorem, p. 114]{Cla37}.
\item\label{R4.14-2}
For a general Banach space, it was shown by Kato and Takahashi~\cite[Theorem~2.9]{KatTak97} that a norm satisfies the $(\alpha,\beta)$-Clarkson inequality if and only if its dual norm satisfies the  inequality with the same $(\al,\be)$. 
Another proof of this result can be found in~\cite[Theorem~2]{Cho01}. 
A recent study~\cite{Cuo25} uses the norm version of the property in Definition~\ref{D3.4} to compute the conventional von Neumann-Jordan constant of $p$-norms on a product of vector spaces.
\end{enumerate}
\end{remark}

The following result establishes a characterization of the Clarkson inequality property.
\begin{proposition}\label{P4.14}
Let $d$ be a metric on a vector space $X$, $f$ be given by \eqref{f}, $\al\in(1,2]$ and $\be\in\R$ with $\frac{1}{\al}+\frac{1}{\be}=1$.
Suppose that $f$ is 2-homogeneous.
Then $f$ satisfies the $(\al,\be)$-Clarkson inequality if and only if
\begin{gather}\label{P4.14-1}
 2\left(f^{\be}(x)+f^{\be}(y)\right)^{\frac{\al}{\be}}\le f^{\al}(x+y)+f^{\al}(x-y)
\end{gather}
for all $x,y\in X$.
\end{proposition}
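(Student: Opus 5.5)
The plan is a change of variables $u:=x+y$, $v:=x-y$ combined with the $2$-homogeneity of $f$. This swaps the roles of $x,y$ and $x\pm y$ in \eqref{D4.6-1} and turns the Clarkson inequality into \eqref{P4.14-1}, and conversely. The key facts are that $u+v=2x$ and $u-v=2y$, and that by Remark~\ref{R1.2}\eqref{R1.2-2} the $2$-homogeneity gives $f(2^kz)=2^kf(z)$ for all integers $k$. In particular $f(z/2)=f(z)/2$.

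\emph{Clarkson $\Rightarrow$ \eqref{P4.14-1}.} Assume \eqref{D4.6-1} for all pairs and fix $x,y\in X$. Apply \eqref{D4.6-1} to the pair $(u,v)=(x+y,x-y)$ to get
\begin{gather*}
f^\be(2x)+f^\be(2y)\le 2\left(f^\al(x+y)+f^\al(x-y)\right)^{\frac{\be}{\al}} .
\end{gather*}
Then rewrite the left side as $2^\be(f^\be(x)+f^\be(y))$ using $2$-homogeneity. Raising both sides to the power $\frac{\al}{\be}>0$ (note $\be=\frac{\al}{\al-1}\ge 2>0$) gives
\begin{gather*}
2^\al\left(f^\be(x)+f^\be(y)\right)^{\frac{\al}{\be}}\le 2^{\frac{\al}{\be}}\left(f^\al(x+y)+f^\al(x-y)\right).
\end{gather*}
Since $\al-\frac{\al}{\be}=\al\left(1-\frac1\be\right)=1$, dividing by $2^{\al/\be}$ yields exactly \eqref{P4.14-1}.

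\emph{\eqref{P4.14-1} $\Rightarrow$ Clarkson.} Fix $x,y\in X$ and apply \eqref{P4.14-1} to the pair $\left(\frac{x+y}{2},\frac{x-y}{2}\right)$, whose sum and difference are $x$ and $y$. This gives
\begin{gather*}
2\left(f^\be\left(\tfrac{x+y}{2}\right)+f^\be\left(\tfrac{x-y}{2}\right)\right)^{\frac{\al}{\be}}\le f^\al(x)+f^\al(y).
\end{gather*}
Use $f(z/2)=f(z)/2$ (Remark~\ref{R1.2}\eqref{R1.2-2}) to pull out $2^{-\be}$, which becomes $2^{-\al}$ after the outer power. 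Then raise both sides to the power $\frac{\be}{\al}$ and collect the powers of two, using the same identity $\be-\frac{\be}{\al}=1$. The result is \eqref{D4.6-1}.

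The only real care point is the exponent bookkeeping. One has to check that the factor $2^{1-\al}\cdot 2^{\al/\be}$ (respectively its dual) reduces to $1$, which rests solely on $\frac1\al+\frac1\be=1$. One also has to note that all powers involved are positive, so the monotone transformations preserve the inequalities. No continuity of $f$ or other structure on $d$ is needed beyond $2$-homogeneity.
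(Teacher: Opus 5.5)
Your proof is correct and follows essentially the same route as the paper. The converse direction is identical, substituting $\bigl(\frac{x+y}{2},\frac{x-y}{2}\bigr)$ into \eqref{P4.14-1}; in the forward direction you substitute $(x+y,x-y)$ and use $f(2z)=2f(z)$, whereas the paper substitutes $\bigl(\frac{x+y}{2},\frac{x-y}{2}\bigr)$ and uses $f(z/2)=f(z)/2$, which is the same argument up to rescaling, and your exponent bookkeeping via $\alpha-\frac{\alpha}{\beta}=\beta-\frac{\beta}{\alpha}=1$ with $\beta\ge 2>0$ is sound.
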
	

\begin{proof}
Let $f$ be 2-homogeneous. 
Suppose that $f$ satisfies the $(\al,\be)$-Clarkson inequality.
Let $x,y\in X$.
By Definition~\ref{D3.4}\eqref{D3.4-1} and Remark~\ref{R1.2}\eqref{R1.2-2},
\begin{align*}
f^{\be}(x)+f^{\be}(y)
\le 2\left(f^\al\left(\frac{x+y}{2}\right)+f^\al\left(\frac{x-y}{2}\right)\right)^{\frac{\be}{\al}}
=2^{1-\be}(f^{\al}(x+y)+f^{\al}(x-y))^{\frac{\be}{\al}},
\end{align*}
and consequently,
\begin{gather*}
2^{\frac{\al(\be-1)}{\be}}(f^{\be}(x)+f^{\be}(y))^{\frac{\al}{\be}} \le f^{\al}(x+y)+f^{\al}(x-y).
\end{gather*}
Note that $\frac{\al(\be-1)}{\be}=1$.
Thus,
condition \eqref{P4.14-1} is satisfied.
Conversely, suppose that condition \eqref{P4.14-1} holds true.
Let $x,y\in X$.
By  \eqref{P4.14-1} and Remark~\ref{R1.2}\eqref{R1.2-2},
\begin{align*}
f^{\al}(x)+f^{\al}(y)
\ge 	
2\left(f^\be\left(\frac{x+y}{2}\right)+f^\be\left(\frac{x-y}{2}\right)\right)^{\frac{\al}{\be}}
=2^{1-\al}(f^\be(x+y)+f^\be(x-y))^{\frac{\al}{\be}},
\end{align*}
and consequently,
\begin{gather*}
f^\be(x+y)+f^\be(x-y)\le 2^{\frac{\be(\al-1)}{\al}} \left(f^\al(x)+f^\al(y)\right)^{\frac{\be}{\al}}.
\end{gather*}
Note that $\frac{\be(\al-1)}{\al}=1$.
Thus, condition \eqref{D4.6-1} is satisfied.
By Definition~\ref{D3.4}\eqref{D3.4-1}, $f$ satisfies the $(\al,\be)$-Clarkson inequality.
\end{proof}	

\begin{remark}
A normed version of Proposition~\ref{P4.14} can be found in \cite[Theorem~2]{Cla36}. 
\end{remark}

The following lemma establishes a relation between the Clarkson inequality property of the $p$-metric and that of its components.
\begin{lemma}\label{L4.14}
Let $p\in(1,2]$, $q\in\R$ with $\frac{1}{p}+\frac{1}{q}=1$, and $f_{\psi_p}$ be given by \eqref{p}.
Then $f_1,\ldots,f_n$ satisfy the $(p,q)$-Clarkson inequality if and only if $f_{\psi_{p}}$ the $(p,q)$-Clarkson inequality.
\end{lemma}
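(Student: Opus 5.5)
The plan is to prove the two implications separately. The reverse implication will be a restriction argument. The direct implication will be a componentwise application of the Clarkson inequality, glued together by the reverse Minkowski inequality in Lemma~\ref{L1.5}\eqref{L1.5-3}.

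\textbf{($f_{\psi_p}$ Clarkson $\Rightarrow$ each $f_i$ Clarkson).} Fix $i$ and $u,v\in X_i$. Embed them as $x:=(0,\ldots,u,\ldots,0)$ and $y:=(0,\ldots,v,\ldots,0)$, with $u,v$ in the $i$-th slot. By \eqref{p}, $f_{\psi_p}$ evaluated at $x$, $y$, $x\pm y$ equals $f_i$ evaluated at $u$, $v$, $u\pm v$, respectively. So \eqref{D4.6-1} for $f_{\psi_p}$ at $(x,y)$ is exactly \eqref{D4.6-1} for $f_i$ at $(u,v)$.

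\textbf{(Each $f_i$ Clarkson $\Rightarrow$ $f_{\psi_p}$ Clarkson).} Let $x=(x_1,\ldots,x_n)$ and $y=(y_1,\ldots,y_n)$. Put $a_i:=f_i(x_i+y_i)$ and $b_i:=f_i(x_i-y_i)$. The left-hand side of \eqref{D4.6-1} for $f_{\psi_p}$ is then
\begin{gather*}
\Big(\sum_{i=1}^n (a_i^q)^{p/q}\Big)^{q/p}+\Big(\sum_{i=1}^n (b_i^q)^{p/q}\Big)^{q/p}.
\end{gather*}
Since $p\in(1,2]$ and $\frac1p+\frac1q=1$, we have $q\ge 2\ge p$, so $\alpha:=p/q\in(0,1]$. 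Lemma~\ref{L1.5}\eqref{L1.5-3}, applied with this $\alpha$ to the nonnegative numbers $a_i^q$ and $b_i^q$, bounds the sum above by
\begin{gather*}
\Big(\sum_{i=1}^n (a_i^q+b_i^q)^{p/q}\Big)^{q/p}.
\end{gather*}
The componentwise $(p,q)$-Clarkson inequality gives $a_i^q+b_i^q\le 2\big(f_i^p(x_i)+f_i^p(y_i)\big)^{q/p}$. Hence
\begin{gather*}
(a_i^q+b_i^q)^{p/q}\le 2^{p/q}\big(f_i^p(x_i)+f_i^p(y_i)\big).
\end{gather*}
Summing over $i$ and raising to the power $q/p$ yields
\begin{gather*}
2\Big(\sum_{i=1}^n f_i^p(x_i)+\sum_{i=1}^n f_i^p(y_i)\Big)^{q/p}=2\big(f_{\psi_p}^p(x)+f_{\psi_p}^p(y)\big)^{q/p},
\end{gather*}
which is \eqref{D4.6-1} for $f_{\psi_p}$.

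The main point to get right is the direction of the Minkowski-type step. The exponent $p/q\le 1$ is exactly what makes the reverse Minkowski inequality of Lemma~\ref{L1.5}\eqref{L1.5-3} available, and this is where the hypothesis $p\le 2$ enters. Degenerate cases with some sums equal to zero are covered, since Lemma~\ref{L1.5}\eqref{L1.5-3} already handles them.
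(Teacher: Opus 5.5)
Your proof is correct and follows the paper's argument essentially verbatim: the paper also applies Lemma~\ref{L1.5}\eqref{L1.5-3} with exponent $p-1=p/q\in(0,1]$ to the numbers $f_i^q(x_i+y_i)$ and $f_i^q(x_i-y_i)$, then inserts the componentwise $(p,q)$-Clarkson inequality and sums. For the converse it likewise embeds $u,v$ into the $i$-th coordinate.
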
	

\begin{proof}
Suppose that $f_1,\ldots,f_n$ satisfy the $(p,q)$-Clarkson inequality. 
Let $x:=(x_1,\ldots,x_n), y:=(y_1,\ldots,y_n)\in \mathrm{P}$. 
Let $a_i:=f^q_i(x_i+y_i)$ and $b^q_i:=f_i(x_i-y_i)$ $(i=1,\ldots,n)$.
By %\todo{Definition~\ref{D3.4}}
Definition \ref{D3.4}\eqref{D3.4-1},
\begin{equation}\label{L4.14-1}
a_i+b_i\le 2(f_i^p(x_i)+f_i^p(y_i))^{\frac{1}{p-1}}\;\; (i=1,\ldots,n).
\end{equation}
By \eqref{p},
\begin{gather}\label{L4.14-2}
f_{\psi_{p}}^q(x+y)=\left(\sum_{i=1}^{n}f_i^p(x_i+y_i)\right)^\frac{q}{p}
= \left(\sum_{i=1}^{n}(f_i^q(x_i+ y_i))^{p-1}\right)^\frac{q}{p}=\left(\sum_{i=1}^{n}a_i^{p-1}\right)^\frac{1}{p-1},\\ \label{L4.14-3}
f_{\psi_{p}}^q(x-y)=\left(\sum_{i=1}^{n}f_i^p(x_i-y_i)\right)^\frac{q}{p}=\left(\sum_{i=1}^{n}(f_i^q(x_i- y_i))^{p-1}\right)^\frac{q}{p}=\left(\sum_{i=1}^{n}b_i^{p-1}\right)^\frac{1}{p-1}.
\end{gather}
Then
\begin{eqnarray*}
f_{\psi_{p}}^q(x+y)+f_{\psi_{p}}^q(x-y)
&\overset{\eqref{L4.14-2},\eqref{L4.14-3}}{=}&\left(\sum_{i=1}^{n}a_i^{p-1}\right)^\frac{1}{p-1}+\left(\sum_{i=1}^{n}b_i^{p-1}\right)^\frac{1}{p-1}\\
&\le&\left(\sum_{i=1}^{n}\left(a_i+b_i\right)^{p-1}\right)^\frac{1}{p-1}\;\;
(\text{Lemma}~\ref{L1.5}\eqref{L1.5-3})\\
&\overset{\eqref{L4.14-1}}{\le}&\left(\sum_{i=1}^{n}2^{p-1}\left(f_i^p(x_i)+f_i^p(y_i)\right)\right)^\frac{1}{p-1}\\
&=&2\left(\sum_{i=1}^{n}\left(f_i^p(x_i)+f_i^p(y_i)\right)\right)^\frac{q}{p}\\
&\overset{\eqref{p}}{=}&2\left(f_{\psi_p}^p(x)+f_{\psi_p}^p(y)\right)^\frac{q}{p}.
\end{eqnarray*}
Thus, $f_{\psi_{p}}$ satisfies the $(p,q)$-Clarkson inequality.
For the converse implication, it suffices to observe that for each $i \in \{1,\ldots,n\}$ and any $x_i, y_i \in X_i$, we have
$f_{\psi_p}(x) = f_i(x_i)$, $f_{\psi_p}(y) = f_i(y_i)$ and $f_{\psi_p}(x+y)= f_i(x_i + y_i)$ thanks to \eqref{p}, where $x:=(0_{X_1},\ldots, 0_{X_{i-1}}, x_i, 0_{X_{i+1}},\ldots,0_{X_n})$ and $y:=(0_{X_1},\ldots, 0_{X_{i-1}}, y_i, 0_{X_{i+1}},\ldots,0_{X_n})$.
This completes the proof.
\end{proof}	

\begin{remark}
\begin{enumerate}[\rm (i)]
\item		
The proof of the direct implication in Lemma~\ref{L4.14} employs the technique introduced by Clarkson~\cite[Theorem~2]{Cla36}.
In that work, the author first proved that the standard norm on $\mathbb{C}$ satisfies the Clarkson inequality and then used this fact to show that the Clarkson inequality property holds for the norms given by~\eqref{n}.
\item
Observe that the norm on $\ell_p$ in \eqref{n} is defined on a countable product of copies of the vector space $\mathbb{C}$, while Lemma~\ref{L4.14} is formulated in terms of a finite product space. 
Nevertheless, the lemma follows a somewhat different approach since it allows $f_1,\ldots,f_n$ to be general functions defined on possibly different vector spaces.
\end{enumerate}
\end{remark}

\begin{theorem}\label{T2.11}
Let the assumptions in Proposition~\ref{T7.2} be satisfied.
The following assertions hold.
\begin{enumerate}[\rm (i)]
\item\label{T2.11-1}
Let $p\in (1,2]$ and $q\in\R$ with $\frac{1}{p}+\frac{1}{q}=1$. 
If $f_1,\ldots,f_n$ satisfy the $(p,q)$-Clarkson inequality, then
$C_{\text{\rm NJ}}^{(\sigma)}(d_{\psi_p})=2^{\frac{\sigma}{p}-\sigma+1}$ for all $\sigma\in [p,q]$.
\item\label{T2.11-2}
Let $p\in(2,\infty)$ and $q\in\R$ with $\frac{1}{p}+\frac{1}{q}=1$. If $f_1,\ldots,f_n$ satisfy the $(q,p)$-Clarkson inequality, then
%\todo{$C_{\text{\rm NJ}}^{(\sigma)}(d_{\psi_p})$}
$C_{\text{\rm NJ}}^{(\sigma)}(d_{\psi_p})=2^{1-\frac{\sigma}{p}}$ for all
$\sigma\in [q,p].$
\item\label{T2.11-3}
If $f_1,\ldots,f_n$ are subadditive and even, then $C_{\text{\rm NJ}}^{(\sigma)}(d_{\psi_1})=C_{\text{\rm NJ}}^{(\sigma)}(d_{\psi_\infty})=2$
for all $\sigma\in[1,\infty)$.
\end{enumerate}	
\end{theorem}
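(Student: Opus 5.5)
The plan is to match the lower bounds of Proposition~\ref{T7.2} with upper bounds obtained from the Clarkson inequality for the product function $F:=f_{\psi_p}$. Since the assumptions of Proposition~\ref{T7.2} are in force, \eqref{T3.10-1} already gives $C_{\text{\rm NJ}}^{(\sigma)}(d_{\psi_p})\ge 2^{\frac{\sigma}{p}-\sigma+1}$ for $p\in(1,2]$, $\ge 2^{1-\frac{\sigma}{p}}$ for $p\in(2,\infty)$, and $\ge 2$ for $p\in\{1,\infty\}$. It therefore suffices to prove the reverse inequalities $\mathcal{G}^{(\sigma)}_{d_{\psi_p}}(x,y)\le$ (the corresponding value) for all $(x,y)\ne(0_{\mathrm P},0_{\mathrm P})$.

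For \eqref{T2.11-1}, Lemma~\ref{L4.14} shows that $F$ satisfies the $(p,q)$-Clarkson inequality. Fix $x,y$ and let $a:=F(x+y)$, $b:=F(x-y)$. Since $\sigma\le q$, the power mean inequality gives
\begin{gather*}
\left(\frac{a^\sigma+b^\sigma}{2}\right)^{\frac1\sigma}\le\left(\frac{a^q+b^q}{2}\right)^{\frac1q}\le \left(F^p(x)+F^p(y)\right)^{\frac1p}.
\end{gather*}
Hence $a^\sigma+b^\sigma\le 2\left(F^p(x)+F^p(y)\right)^{\sigma/p}$. Because $\sigma\ge p$, the second inequality of Lemma~\ref{L1.5}\eqref{L1.5-2} with $\al:=\sigma/p$ and $n=2$ bounds the right-hand side by $2\cdot 2^{\frac{\sigma}{p}-1}(F^\sigma(x)+F^\sigma(y))$. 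This yields $\mathcal{G}^{(\sigma)}_{d_{\psi_p}}(x,y)\le 2^{\frac{\sigma}{p}}/2^{\sigma-1}=2^{\frac{\sigma}{p}-\sigma+1}$.

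For \eqref{T2.11-2}, Lemma~\ref{L4.14} does not apply directly, since the index of the $p$-metric now exceeds $2$. The main step is therefore to prove by hand that $F$ satisfies the $(q,p)$-Clarkson inequality. Using \eqref{p} and the $(q,p)$-Clarkson inequality of each $f_i$, we have
\begin{gather*}
F^p(x+y)+F^p(x-y)=\sum_{i=1}^n\left(f_i^p(x_i+y_i)+f_i^p(x_i-y_i)\right)\le 2\sum_{i=1}^n(a_i+b_i)^r,
\end{gather*}
where $a_i:=f_i^q(x_i)$, $b_i:=f_i^q(y_i)$ and $r:=p/q>1$. Minkowski's inequality in $\ell_r$ gives
\begin{gather*}
\left(\sum_{i=1}^n(a_i+b_i)^r\right)^{\frac1r}\le\left(\sum_{i=1}^n a_i^r\right)^{\frac1r}+\left(\sum_{i=1}^n b_i^r\right)^{\frac1r}=F^q(x)+F^q(y).
\end{gather*}
Combining the two displays, $F^p(x+y)+F^p(x-y)\le 2(F^q(x)+F^q(y))^{p/q}$, which is the $(q,p)$-Clarkson inequality for $F$. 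Then, for $\sigma\in[q,p]$, we argue as in \eqref{T2.11-1}. The power mean with $\sigma\le p$ gives
\begin{gather*}
a^\sigma+b^\sigma\le 2^{1-\frac{\sigma}{p}}\left(a^p+b^p\right)^{\frac{\sigma}{p}}\le 2\left(F^q(x)+F^q(y)\right)^{\frac{\sigma}{q}}.
\end{gather*}
Lemma~\ref{L1.5}\eqref{L1.5-2} with $\al:=\sigma/q\ge1$ then bounds this by $2^{\frac{\sigma}{q}}(F^\sigma(x)+F^\sigma(y))$. Since $\frac{\sigma}{q}-\sigma+1=1-\frac{\sigma}{p}$, we obtain $\mathcal{G}^{(\sigma)}_{d_{\psi_p}}(x,y)\le 2^{1-\frac{\sigma}{p}}$.

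For \eqref{T2.11-3}, the subadditivity and evenness of $f_1,\ldots,f_n$ allow us to invoke Corollary~\ref{C4.5}\eqref{C4.5-2} with $\psi=\psi_1,\psi_\infty\in\pmb{\Psi}_n$, which gives the upper bound $2$. Together with the lower bound $2$ from \eqref{T3.10-1}, this proves equality. The only genuinely new ingredient is the Minkowski step in \eqref{T2.11-2}; everything else combines earlier results with the elementary power-mean inequalities.
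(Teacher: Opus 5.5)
Your proposal is correct. Parts (i) and (iii) follow the paper's proof. For (i), the paper uses Lemma~\ref{L1.5}\eqref{L1.5-1} with exponents $\sigma/q$ and $p/\sigma$. These are the same estimates as your power-mean step and your use of Lemma~\ref{L1.5}\eqref{L1.5-2} with exponent $\sigma/p$. For (iii), the paper notes that $f_{\psi_p}$ is subadditive and even and applies Theorem~\ref{T2.5}\eqref{T2.5-3}, which is exactly what Corollary~\ref{C4.5}\eqref{C4.5-2} packages.

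The real difference is in (ii). The paper disposes of it with ``similar arguments as in the proof of (i) with $p$ and $q$ interchanged.'' Taken literally, that interchange applies Lemma~\ref{L4.14} with $q\in(1,2)$ in the role of $p$. This gives the $(q,p)$-Clarkson inequality for $f_{\psi_q}$, not for $f_{\psi_p}$. The fact that $f_{\psi_p}$ with $p>2$ inherits the $(q,p)$-Clarkson inequality from $f_1,\ldots,f_n$ is never justified there, yet the chain of estimates needs it.

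Your Minkowski argument supplies precisely this step, and it checks out. Since $F^p$ splits additively over the coordinates, the left-hand side decomposes componentwise. The ordinary Minkowski inequality in $\ell_{r}$ with $r=p/q=p-1>1$ then reassembles the right-hand side into $(F^q(x)+F^q(y))^{p/q}$. This is the mirror image of the reverse-Minkowski step (Lemma~\ref{L1.5}\eqref{L1.5-3}) used in Lemma~\ref{L4.14}. So in (ii) your route is more complete than the paper's.
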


\begin{proof}	
\begin{enumerate}[\rm (i)]
\item	
Let $p\in (1,2]$, $q\in\R$ with $\frac{1}{p}+\frac{1}{q}=1$, and $\sigma\in [p,q]$.
By \eqref{T3.10-1}, it suffices to prove that $C_{\text{NJ}}^{(\sigma)}(d_{\psi_p})\le 2^{\frac{\sigma}{p}-\sigma+1}$.
%Note that $p\le 2\le q$.
In view of Lemma~\ref{L4.14}, the function $f_{\psi_p}$ satisfies the $(p,q)$-Clarkson inequality.
Let $x,y\in\mathrm{P}$.
By Definition~\ref{D3.4}\eqref{D3.4-1},
\begin{gather}\label{T3.5-3}
f^q_{\psi_p}(x+y)+f^q_{\psi_p}(x-y)\le 2\left(f^p_{\psi_p}(x)+f^p_{\psi_p}(y)\right)^{\frac{q}{p}}.
\end{gather}	
Note that $\frac{\sigma}{q},\frac{p}{\sigma}\in(0,1]$.
Then
\begin{eqnarray*}
f^\sigma_{\psi_p}(x+y)+f^\sigma_{\psi_p}(x-y)
&=&\left(f^q_{\psi_p}(x+y)\right)^{\frac{\sigma}{q}}+\left(f^q_{\psi_p}(x-y)\right)^{\frac{\sigma}{q}}\\
&\le&2^{1-\frac{\sigma}{q}}\cdot(f^q_{\psi_p}(x+y)+f^q_{\psi_p}(x-y))^{\frac{\sigma}{q}}\;\;(\text{Lemma~\ref{L1.5}\eqref{L1.5-1}})\\
&\overset{\eqref{T3.5-3}}{\le}&2\left(f^p_{\psi_p}(x)+f^p_{\psi_p}(y)\right)^{\frac{\sigma}{p}}\\
&=&2\left[\left(f^\sigma_{\psi_p}(x)\right)^{\frac{p}{\sigma}}+\left(f^\sigma_{\psi_p}(y)\right)^{\frac{p}{\sigma}}\right]^{\frac{\sigma}{p}}\\
&\le& 2\left[2^{1-\frac{p}{\sigma}}\cdot\left(f^\sigma_{\psi_p}(x)+f^\sigma_{\psi_p}(y)\right)^{\frac{p}{\sigma}}\right]^{\frac{\sigma}{p}}\;\;(\text{Lemma~\ref{L1.5}\eqref{L1.5-1}})\\
&=&2^{\frac{\sigma}{p}}(f^\sigma_{\psi_p}(x)+f^\sigma_{\psi_p}(y))\\
&=&2^{\sigma-1}(f^\sigma_{\psi_p}(x)+f^\sigma_{\psi_p}(y))\cdot2^{\frac{\sigma}{p}-\sigma+1}.
\end{eqnarray*}	
By \eqref{G} and \eqref{C1}, $C_{\text{NJ}}^{(\sigma)}(d_{\psi_p})\le 2^{\frac{\sigma}{p}-\sigma+1}$.
\item
Let $p\in(2,\infty)$, $q\in\R$ with $\frac{1}{p}+\frac{1}{q}=1$, and $\sigma\in[q,p]$.
By \eqref{T3.10-1}, it suffices to prove that $C_{\text{NJ}}^{(\sigma)}(d_{\psi_p})\le 2^{1-\frac{\sigma}{p}}$.
Note that $q\in(1,2)$.
Using similar arguments as in the proof of \eqref{T2.11-1} with $p$ and $q$ interchanged, we obtain 
$C_{\text{NJ}}^{(\sigma)}(d_{\psi_p})\le 2^{\frac{\sigma}{q}-\sigma+1}= 2^{1-\frac{\sigma}{p}}$.
\item
Let $p\in\{1,\infty\}$, and $\sigma\in [1,\infty)$.
In view of \eqref{T3.10-1}, we have
$C_{\text{NJ}}^{(\sigma)}(d_{\psi_p})\ge 2$.
Under the assumption made, $f_{\psi_p}$ is subadditive and even.
By %\todo{Theorem~\ref{T2.5}\eqref{T2.5-3}}
Theorem~\ref{T2.5}\eqref{T2.5-3}, $C_{\text{NJ}}^{(\sigma)}(d_{\psi_p})\le 2$.
This proves the assertion.
\end{enumerate}
The proof is complete.
\end{proof}	

In view of Theorem~\ref{T2.11}\eqref{T2.11-3}, the generalized von Neumann-Jordan constants of $d_{\psi_1}$ and $d_{\psi_\infty}$ coincide.
The following statement shows that equalities also hold for the other pairs of metrics.
\begin{corollary}\label{C7.3}
Let $p\in(1,\infty)$, and $q\in\R$ with $\frac{1}{p}+\frac{1}{q}=1$.
Suppose that the assumptions in Proposition~\ref{T7.2} are satisfied.
If $f_1,\ldots,f_n$ satisfy the Clarkson inequality, then
\begin{gather*}
C_{\text{\rm NJ}}^{(\sigma)}(d_{\psi_p})=C_{\text{\rm NJ}}^{(\sigma)}(d_{\psi_q})=
\begin{cases}	
2^{\frac{\sigma}{p}-\sigma+1}& \text{\rm if } p\in(1,2]\;\; \text{\rm and }\sigma\in\left[p,q\right],\\
2^{1-\frac{\sigma}{p}}& \text{\rm if } p\in(2,\infty)\;\;\text{\rm and }  \sigma\in\left[q,p\right].
\end{cases}
\end{gather*}
\end{corollary}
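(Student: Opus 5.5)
The plan is to apply Theorem~\ref{T2.11} twice, once to $d_{\psi_p}$ and once to $d_{\psi_q}$, and then check that the two resulting formulas agree. The assumptions in Proposition~\ref{T7.2} do not involve the exponent, so they hold for both $d_{\psi_p}$ and $d_{\psi_q}$. Since $f_1,\ldots,f_n$ satisfy the Clarkson inequality (Definition~\ref{D3.4}\eqref{D3.4-2}), they satisfy the $(\al,\be)$-Clarkson inequality for every admissible conjugate pair with $\al\in(1,2]$. The only bookkeeping is to decide which of $p,q$ lies in $(1,2]$ and invoke the matching part of Theorem~\ref{T2.11}.

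\emph{Case $p\in(1,2]$.} Then $q\in[2,\infty)$, and $f_1,\ldots,f_n$ satisfy the $(p,q)$-Clarkson inequality. Theorem~\ref{T2.11}\eqref{T2.11-1} gives $C_{\text{NJ}}^{(\sigma)}(d_{\psi_p})=2^{\frac{\sigma}{p}-\sigma+1}$ for $\sigma\in[p,q]$.

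For $d_{\psi_q}$, split according to $q$.
\begin{itemize}
\item If $q\in(2,\infty)$, apply Theorem~\ref{T2.11}\eqref{T2.11-2} with the roles of $p$ and $q$ exchanged (its conjugate is $p\in(1,2)$, and the $(p,q)$-Clarkson inequality holds). This gives $C_{\text{NJ}}^{(\sigma)}(d_{\psi_q})=2^{1-\frac{\sigma}{q}}$ for $\sigma\in[p,q]$. Using $\frac1q=1-\frac1p$, we get $1-\frac{\sigma}{q}=1-\sigma+\frac{\sigma}{p}$, so the two constants agree.
\item If $p=q=2$, the two metrics coincide and there is nothing to prove.
\end{itemize}

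\emph{Case $p\in(2,\infty)$.} Then $q\in(1,2)$, and the $(q,p)$-Clarkson inequality holds. Theorem~\ref{T2.11}\eqref{T2.11-2} gives $C_{\text{NJ}}^{(\sigma)}(d_{\psi_p})=2^{1-\frac{\sigma}{p}}$ for $\sigma\in[q,p]$. Theorem~\ref{T2.11}\eqref{T2.11-1}, applied with $q$ in the role of $p$, gives $C_{\text{NJ}}^{(\sigma)}(d_{\psi_q})=2^{\frac{\sigma}{q}-\sigma+1}$ on the same interval $[q,p]$. The same conjugacy identity shows that $\frac{\sigma}{q}-\sigma+1=1-\frac{\sigma}{p}$.

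There is no real obstacle here. The only points needing care are, first, that the interval of admissible $\sigma$ is the same set $[\min\{p,q\},\max\{p,q\}]$ in both applications, and second, that the exponent identities follow from $\frac1p+\frac1q=1$.
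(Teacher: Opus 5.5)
Your proposal is correct and follows essentially the same route as the paper: apply Theorem~\ref{T2.11}\eqref{T2.11-1} to whichever of $p,q$ lies in $(1,2]$ and Theorem~\ref{T2.11}\eqref{T2.11-2} to the other, then match the exponents using $\frac1p+\frac1q=1$. Your separate handling of $p=q=2$ (the two metrics coincide) is slightly more careful than the paper, which formally invokes part~\eqref{T2.11-2} with $q=2$ even though that part is stated only for exponents in $(2,\infty)$.
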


\begin{proof}
We use Theorem~\ref{T2.11}\eqref{T2.11-1} and \eqref{T2.11-2} in the following arguments.	
If $p\in(1,2]$, then
\begin{gather*}
C_{\text{\rm NJ}}^{(\sigma)}(d_{\psi_p})	
\overset{\eqref{T2.11-1}}{=}
2^{\frac{\sigma}{p}-\sigma+1}=2^{1-\frac{\sigma}{q}}\overset{\eqref{T2.11-2}}{=}
C_{\text{\rm NJ}}^{(\sigma)}(d_{\psi_q})
\end{gather*}	
for all $\sigma\in[p,q]$.	
If $p\in(2,\infty)$, then
\begin{gather*}
C_{\text{\rm NJ}}^{(\sigma)}(d_{\psi_p})\overset{\eqref{T2.11-2}}{=}2^{1-\frac{\sigma}{p}}=
2^{\frac{\sigma}{q}-\sigma+1}\overset{\eqref{T2.11-1}}{=}C_{\text{\rm NJ}}^{(\sigma)}(d_{\psi_q})
\end{gather*}	
for all $\sigma\in[q,p]$.	
The proof is complete.
\end{proof}	

\begin{remark}
\begin{enumerate}[\rm (i)]
\item		
For every $p \in (1,\infty)$, the admissible interval for $\sigma$ in Corollary~\ref{C7.3} always contains $2$. 
When $X_1=\cdots=X_n$ is a normed space and $\sigma=2$,
this corollary recaptures the corresponding results in~\cite{Cuo25,SaiKatTak00,Cla37}.
\item
When $p=2$, the admissible interval is the singleton $\{2\}$, and consequently,
$C_{\text{\rm NJ}}^{(2)}(d_{\psi_2})=1$.
This extends the classical result of Clarkson and von Neumann from the normed to the metric setting \cite{JorNeu35,SaiKatTak00+}.
\end{enumerate}
\end{remark}	

\section*{Disclosure statement}
The authors report there are no competing interests to declare.

\section*{Data availability statement}
Data sharing is not applicable to this article as no new data were created or analyzed in this study.


\begin{thebibliography}{99}
	\bibitem{JorNeu35}
	Jordan~P, Neumann~JV. On inner products in linear, metric spaces. Annals of Mathematics. 1935;\hspace{0pt}36(3): 719-723.
	
	\bibitem{Cla36}
	Clarkson~JA. Uniformly convex spaces. Transactions of the American Mathematical
	Society. 1936;\hspace{0pt}40(3):396-414.
	
	\bibitem{Cla37}
	Clarkson~JA. The von {Neumann}--{Jordan} constant for the {Lebesgue} spaces.
	Annals of Mathematics. 1937;\hspace{0pt}38:114-115.
	
	 \bibitem{Bea82}
	Beauzamy B. Introduction to Banach spaces and their geometry. North-Holland Mathematics Studies, Vol. 68. North-Holland Publishing Company. 1982.
	
	\bibitem{JohLin01}
	Johnson WB, Lindenstrauss J. Handbook of the geometry of Banach spaces. Vol. 1. Elsevier. 2001.
	
	  \bibitem{KatTak98}
	Kato M, Takahashi Y. Von Neumann-Jordan constant for Lebesgue--Bochner spaces. Journal of Inequalities and Applications. 1998;2:89--97.
	
	\bibitem{KatMal01}
	Kato M, Maligranda L. On James and Jordan--von Neumann constants of Lorentz sequence spaces. Journal of Mathematical Analysis and Applications. 2001;258:457-465.
	
	\bibitem{Sae10}
	Saejung S. Another look at Cesàro sequence spaces. Journal of Mathematical Analysis and Applications. 2010;366:530--537.
	
	%\bibitem{Zho25}	Zhou H, Liu Q, Wang Y, Liang M, Fu L. Skew von Neumann constant in weak Orlicz spaces and weak Lebesgue spaces. 2025; arXiv:2508.06999.
	
  \bibitem{CuiHuaHudKac15}
	Cui~Y, Huang~W, Hudzik~H, Kaczmarek~R.
	Generalized von Neumann-Jordan constant and its relationship to the fixed point property. Fixed Point Theory and Applications.
	2015;\hspace{0pt}2015(1),40.
	
	\bibitem{WanCuiZha15}
	Wang~X, Cui~Y, Zhang~C.
	The generalized von Neumann-Jordan constant and normal structure in Banach spaces. Annals of Functional Analysis.
	2015;\hspace{0pt}6(4):206-214.
	
	 \bibitem{YanLi10}
	Yang C, Li H. An inequality between Jordan-von Neumann constant and James constant. Applied Mathematics Letters. 2010;23:277-281.
	
%	\bibitem{Zuo21}	Zuo Z.-F., Wang L.-W., Zhao Y.-X., Wu Y.-Q. On the generalized von Neumann–Jordan type constant for some concrete Banach spaces. Mathematical Inequalities and Applications. 2021;24:597–615.
	
%	\bibitem{LiYanYan24}	Li H., Yang X., Yang C. On $(n,p)$–th von Neumann–Jordan constants for Banach spaces. Mathematical Inequalities and Applications. 2024;27:583–600.
	
%	\bibitem{TakKat14}
%	Takahashi Y, Kato M. On a new geometric constant related to the modulus of smoothness of a Banach space. Acta Mathematica Sinica, English Series. 2014;30:1526--1538.
	
	\bibitem{DhoPirSae03}
	Dhompongsa S, Piraisangjun P, Saejung S. Generalised Jordan-von Neumann constants and uniform normal structure. Bulletin of the Australian Mathematical Society. 2003;67:225--240.
	
	\bibitem{Wan10}
	Wang F. On the James and von Neumann-Jordan constants in Banach spaces. Proceedings of the American Mathematical Society. 2010;138:695-701.
	
	\bibitem{WanPan09}
	Wang F, Pang B. Some inequalities concerning the James constant in Banach spaces. Journal of Mathematical Analysis and Applications. 2009;353:305-310.
	
%	\bibitem{DinSan25}
%	Dinarvand M, Sanatpour AH. Normal structure and generalized von Neumann–Jordan type constant. Filomat. 2025;39:11701--11716.
	
	\bibitem{KatTak97+}
	Kato~M, Takahashi~Y. On the von {Neumann}--{Jordan} constant for {Banach}
	spaces. Proceedings of the American Mathematical Society.
	1997;\hspace{0pt}125(4):1055-1062.
	
	  \bibitem{TakKat98}
	Takahashi Y, Kato M. Von Neumann-Jordan constant and uniformly non-square Banach spaces. Nihonkai Mathematical Journal. 1998;9:155-169.
	
	\bibitem{JimLloSae06}
	Jiménez-Melado A, Llorens-Fuster E, Saejung S. The von Neumann–Jordan constant, weak orthogonality and normal structure in Banach spaces. Proceedings of the American Mathematical Society. 2006;134:355-364.
	
	 \bibitem{DhoDomKaeKaePan06}
	Dhompongsa~S, Domínguez~Benavides~T, Kaewcharoen~A, Kaewkhao~A, Panyanak~B.
	The Jordan--von Neumann constants and fixed points for multivalued nonexpansive mappings.
	Journal of Mathematical Analysis and Applications. 2006;\hspace{0pt}320(2):916-927.
	
	\bibitem{Sae06}
	Saejung S. On James and von Neumann-Jordan constants and sufficient conditions for the fixed point property. Journal of Mathematical Analysis and Applications. 2006;323:1018-1024.
	
	\bibitem{Cuo25}
	Cuong ND. Primal and dual characterizations of sign--symmetric norms.
	2025; arXiv:2504.19642.
	To appear in \emph{Positivity}.
	
	\bibitem{SaiKatTak00}
	Saito KS, Kato M, Takahasi Y. Von Neumann-Jordan constant of absolute normalized norms on {$\mathbb{C}^2$}. Journal of Mathematical Analysis and Applications. 2000;\hspace{0pt}244(2):515--532.
	
	\bibitem{SaiKatTak00+}
	Saito~KS, Kato~M, Takahashi~Y. Absolute norms on $\mathbb{C}^n$.
	Journal of Mathematical Analysis and Applications. 2000;\hspace{0pt}252(2):879-905.
	
	\bibitem{IkeKat14}
	Ikeda T, Kato M. Notes on von Neumann-Jordan and James constants for absolute norms on $\mathbb{R}^2$. Mediterranean Journal of Mathematics. 2014;11:633-642.
	
	\bibitem{Rol84}
	Rolewicz S. Metric linear spaces. PWN-Polish Scientific Publishers. Warsaw. 1985.
	
	 \bibitem{Don21}
	Dontchev AL. Lectures on variational analysis. Applied Mathematical Sciences. Springer Cham. 2021.
	
	\bibitem{Kak36}
	Kakutani S. Über die metrisation der topologischen gruppen. Proceedings of the Imperial Academy. 1936;12(4):82-84.
	
	\bibitem{Mab03}
	Mabizela S. Characterization of best approximations in metric linear spaces. Analysis in Theory and Applications. 2003;19(2): 121-129.
	
	\bibitem{Alb79}
	Albinus G. Approximation in metric linear spaces. Banach Center Publ. 1979;4:7-18.
	
	\bibitem{Yos95}
	Yosida K. Functional analysis. Classics in Mathematics. Springer Berlin Heidelberg. 1995.
	
	\bibitem{Rol72}
	Rolewicz S. Open problems in theory of metric linear spaces. Mémoires de la Société Mathématique de France. 1972;31-32:327-334.
	
	 \bibitem{SinNar20}
	Singh J, Narang TD. Convex linear metric spaces are normable. Journal of Analysis. 2020;28:705-709.
	
	\bibitem{AmiKho24}
	Amini-Harandi A, Khosravani S. On the calculations of the generalized von Neumann–Jordan constants $C^{(p)}_{NJ}(L^r(\Omega,\Sigma,\mu))$ and $\tilde{C}^{(p)}_{NJ}(L^r(\Omega,\Sigma,\mu))$. Mathematical Inequalities and Applications. 2024;27:659-667.
	
	\bibitem{RahGun21}
	Rahman H, Gunawan H. Generalized von Neumann-Jordan constant for Morrey spaces and small Morrey spaces. Australasian Journal of Mathematical Analysis and Applications. 2021;18(1):Article 17, 1-7.
    
    \bibitem{Mar09}
    Kuczma~M. An introduction to the theory of functional equations and
    inequalities: Cauchy’s equation and Jensen’s inequality. Basel:
    Birkhäuser Basel; 2009. Texts and Readings in Mathematics.
	
	 \bibitem{Pen13}
	Penot JP. Calculus without derivatives. Graduate Texts in Mathematics, Vol. 266. Springer New York. 2013.
	
	\bibitem{Jen06}
	Jensen~JLWV. Sur les fonctions convexes et les inégalités entre les valeurs
	moyennes. Acta Mathematica. 1906;\hspace{0pt}30(1):175-193.
	
	\bibitem{TamTsu21}
	Tamura~A, Tsurumi~K.  Directed discrete midpoint convexity. 
	Japan Journal of Industrial and Applied Mathematics. 2021;\hspace{0pt}38(1):1-37.

	\bibitem{YanWan06}
    Yang~C, Wang~F. On a new geometric constant related to the von Neumann-Jordan constant. Journal of mathematical analysis and applications. 2006;\hspace{0pt}324(1):555--565.
	
	 \bibitem{BonDun73}
	Bonsall FF, Duncan J. Numerical ranges II. London Mathematical Society Lecture Note Series. Cambridge University Press. 1973.
	
    \bibitem{HieTamCuo26}
	Hieu DH, Tam VM, Cuong ND. Metric constructions and fixed point theorems in product spaces. 2026; arXiv:2601.15907.
	
	\bibitem{Cuo26}
	Cuong ND. Dual characterizations of norm minimization problems. 
	 2026; arXiv:2601.08153.
	To appear in \textit{Optimization}. 
	
	\bibitem{TanOhwSai14}
	Tanaka R, Ohwada T, Saito KS. Geometric constants and characterizations of inner product spaces. Mathematical Inequalities and Applications. 2014;17:513-520.
	
	\bibitem{Cao03}
	Cao~HX. Von {Neumann}--{Jordan} constants of absolute normalized norms on
	{$\mathbb{C}^n$}. Acta Mathematica Sinica. 2003;\hspace{0pt}19(3):507-512.
	
	\bibitem{KatTak97}
	Kato~M, Takahashi~Y. Type, cotype constants and Clarkson's inequalities for Banach Spaces.
	Mathematische Nachrichten. 1997;\hspace{0pt}186:187-196.
	
	\bibitem{Cho01}
	Cho~CM. A note on Clarkson's inequalities.
	Bulletin of the Korean Mathematical Society. 2001;\hspace{0pt}38(4):657-662.

\end{thebibliography}
\end{document}